\documentclass[a4paper,12pt]{article}
\setlength{\textwidth}{16cm}
\setlength{\textheight}{23cm}
\setlength{\oddsidemargin}{0mm}
\setlength{\topmargin}{-1cm}

\usepackage{latexsym}
\usepackage{amsmath}
\usepackage{amssymb}
\usepackage{enumerate}
\usepackage{enumitem}
\usepackage{tikz}
\usepackage{color}

\usepackage{theorem}
\newtheorem{theorem}{Theorem}[section]
\newtheorem{proposition}[theorem]{Proposition}
\newtheorem{lemma}[theorem]{Lemma}

\newtheorem{CMTC}{Corollary to Theorem C}

\newtheorem{MTA}{Theorem A}
\newtheorem{MTB}{Theorem B}
\newtheorem{MTC}{Theorem C}

\theorembodyfont{\rmfamily}
\newtheorem{proof}{\textmd{\textit{Proof.}}}

\newtheorem{remark}[theorem]{Remark}
\newtheorem{example}[theorem]{Example}
\newtheorem{definition}[theorem]{Definition}

\makeatletter

\@addtoreset{equation}{section}
\makeatother

\newcommand{\qedd}{\hfill \Box}
\newcommand{\ve}{\varepsilon}

\newcommand{\R}{\ensuremath{\mathbb{R}}}

\newcommand{\cC}{\ensuremath{\mathcal{C}}}

\title{The Singular Locus of an Almost Distance Function 
\footnote{
Mathematics Subject Classification (2010)\,:\,53C22, 53C60.}
\footnote{
Keywords: almost distance function, Busemann function, copoint, coray (asymptotic ray),  cut
locus,  Finsler surface, Lipschitz version of the inverse function theorem,  1-Lipschitz function,  ray, reverse cut locus, Sard theorem, singular locus.}
}
\author{Minoru TANAKA}
\date{}
\pagestyle{plain}

\begin{document}


\maketitle

\begin{abstract}
The aim of this article is to generalize the notion of the cut locus and to get the structure theorem for it.
For this purpose,  we first  introduce a class  of  1-Lipschitz  functions on a Finsler manifold, each member of 
which is called an {\it almost distance function}.
Typical examples of an almost distance function are 　the distance function from a point and the Busemann functions.

 The generalized notion of the cut locus in this paper is called 
the  {\it singular locus}  of an almost distance function.
The singular locus  consists of the {\it upper singular locus} and the {\it lower singular locus.}
The upper singular locus coincides with the cut locus of a point $p$
for the distance function from the point $p,$ and the lower singular locus coincides with the set of all copoints of a ray $\gamma$ when the almost distance function is the Busemann function of the ray $\gamma$.
Therefore, it is possible to treat the cut locus 
of a closed subset and the set of copoints of a ray in a unified way　
by introducing the singular locus for the almost  distance function.

In this article, some theorems on the distance function from a closed set and  the Busemann function are generalized by making use of the almost distance function.

\end{abstract}

\section{Introduction}

Since Poincar\'e \cite{P} introduced the notion of the {\it cut locus} of a point in a compact convex surface in 1905, this notion has been generalized for a submanifold of an (arbitrary dimensional) Riemannian manifold,  and a closed subset of a Riemannian manifold or a Finsler manifold.

The cut locus of  a point of a Riemannian manifold is  the most fundamental case when
 we consider  the cut locus.
The cut locus of  a point $p$ in a complete Riemannian manifold $M$ is very closely related to
the distance  function $d_p$ from the point $p.$ 
It is well known that the cut locus equals the closure of the set of all non-differentiable points of the function $d_p$ on $M\setminus\{p\}.$

This  function $d_p$ has two important properties:
One is 1-Lipschitz, i.e.,
for any points $x,y$ in $M,$
$d_p(x)-d_p(y)\le d(y,x).$
Here $d(\cdot,\cdot)$ denotes the Riemannian distance on $M.$
The other one is
that for any unit speed minimal geodesic segment $\gamma :[0,a]\to M$ emanating from $p,$
$d_p(\gamma(t))=t$ holds on $[0,a].$

If the geodesic segment $\gamma$ 
 has no extensions as a minimal geodesic, then the end point $\gamma(a)$ is called a {\it cut point} of $p$ along $\gamma,$ and the set of the cut points along all minimal geodesic segments  issuing from $p$ is called the {\it cut locus} of $p$.

In this paper we  generalize the notion of the cut locus. 
For this purpose,
we first  introduce 
  an {\it almost distance function} on a Finsler manifold.
We need  the  notion of $f$-geodesics for introducing the almost distance function $f.$

Let $f$ be a 1-Lipschitz function on a connected Finsler manifold $M.$
A unit pseed geodesic segment $\alpha :I \to M$ is called an $f$-{\it geodesic} if
$f(\alpha(t))-f(\alpha(s))=t-s$ holds for any $s,t\in I,$ where $I$ denotes an interval.

A 1-Lipschitz function $f$  on a connected Finsler manifold $M$ is called an {\it almost distance function} if for each point $p\in f^{-1}(\inf f,\sup f),$ there exist a neighborhood $U_p$ of $p$ and  a positive constant
$\delta(p)$ such that for each point $q\in U_p,$ there exists an $f$-geodesic $\gamma_q:[0,\delta(p)]\to M$  with $q=\gamma_q(0)$ or $q=\gamma_q(\delta(p)).$

The distance function $d_p$ is an almost distance  function on a connected Finsler manifold.
Another typical example is the Busemann function  $B_\gamma$ of a ray $\gamma$ on a forward complete connected Finsler manifold. 
It is known that 
 a ray $\sigma$ is  a {\it coray} of  a ray $\gamma$ if and only if the ray  $\sigma$ is  a $B_\gamma$-geodesic and that for  any ray $\gamma,$ there exists a coray of $\gamma$ emanating from
each point of the manifold (see for example \cite{N1,Oh,Sa}).

The class of almost distance functions is very rich as   the last example in Section 7 shows that the {\it singular locus } of an almost distance function can have  a very complicated structure, which never happens for the distance function from a point in a Riemannian manifold.
The {\it singular locus} $\cC(f)$ of an almost distance function $f$
is defined as the set of the end points of  all maximal $f$-geodesics.
For example, the singular locus of the distance function $d_p$ from a point $p$ on a bi-complete Finsler manifold equals  the union of the cut locus of $p$ and $\{p\}.$

In this article, we will prove three main theorems, {\bf Theorems A, B,} and  {\bf C}
which have been already proved  in the case where the almost distance function $f$ is the distance function from a closed subset or a Busemann function.

The following theorem generalizes  \cite[Theorem 2.5]{KT1},  \cite[Theorem 1.2]{Sa} and \cite[Theorem 2.3]{ST}.

\begin{MTA}
Let $f$ be an almost distance function on a connected  Finsler manifold $M$ and $p\in f^{-1}(\inf f, \sup f).$  
Then  $f$ is differentiable at $p$ if and only if $p$ admits  a unique $f$-geodesic. 
Moreover, if $f$ is differentiable at a point $p$ in $f^{-1}(\inf f, \sup f)$, its differential 
$df_p$  at $p$ is given by 
\begin{equation}\label{diff f}
df_{p}(v)=g_{\dot\gamma(f(p))}({\dot\gamma(f(p))},v)
\end{equation}
for any $v\in T_pM,$ where $\gamma$ denotes the unique maximal $f$-geodesic through $p=\gamma(f(p))$ with canonical parameter, and $\dot\gamma(f(p))$ denotes the velocity vector of $\gamma$ at $p.$ 
Here, the {\it canonical } parameter of the $f$-geodesic $\alpha$ means  that  the parameter
$t$
satisfies $f\circ\alpha(t)=t.$ 
\end{MTA}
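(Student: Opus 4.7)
My plan is to prove the equivalence by treating the two directions separately, with the \emph{differentiability $\Rightarrow$ uniqueness} direction following at once from the chain rule and Legendre duality in Finsler geometry, and the converse requiring a compactness argument for $f$-geodesics at nearby points together with a Finsler Taylor expansion.

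For the first direction, I would fix any $f$-geodesic $\gamma$ through $p$ with canonical parameter and set $w := \dot\gamma(f(p))$.  Differentiating the identity $f\circ\gamma(t)=t$ at $t=f(p)$ by the chain rule gives $df_p(w)=1$, while the 1-Lipschitz property of $f$ forces the dual Finsler norm of $df_p$ to be at most $1$.  Combining these, the dual norm equals $1$ and is attained at the unit vector $w$.  Strong convexity of the Finsler indicatrix then ensures that the attaining vector is unique and equals the Legendre dual of $df_p$, so $df_p = g_w(w,\cdot)$.  This simultaneously yields uniqueness of $w$ (hence of the maximal $f$-geodesic through $p$) and the formula \eqref{diff f}.

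For the converse I would assume $\gamma$ is the unique $f$-geodesic through $p$, set $w = \dot\gamma(f(p))$, and show that for any smooth curve $c$ with $c(0)=p$ and $\dot c(0)=v$,
\[
\lim_{t\to 0}\frac{f(c(t))-f(p)}{t} = g_w(w,v);
\]
since $f$ is locally Lipschitz on $f^{-1}(\inf f,\sup f)$, the linear Gâteaux derivative this produces upgrades to the Fréchet differential.  The key preliminary is a \emph{convergence lemma}: for any sequence $q_n\to p$, the $f$-geodesics $\gamma_{q_n}$ furnished by the almost distance hypothesis admit a subsequence converging (together with their unit velocities) to an $f$-geodesic through $p$, which by uniqueness must be $\gamma$; hence the velocities at $q_n$ converge to $w$.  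This uses compactness of unit spheres, continuous dependence of Finsler geodesics on initial data, and preservation of $f\circ\alpha(t)=t+\mathrm{const}$ under uniform limits (from continuity of $f$).  A consistency corollary: if $p$ is an initial (resp.~terminal) endpoint of the maximal $\gamma$, then for $q$ sufficiently close to $p$ the point $q$ must be the initial (resp.~terminal) endpoint of $\gamma_q$, since otherwise the limit would extend $\gamma$ beyond its maximal endpoint.

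The main analytic step is then to sandwich $f(c(t))-f(p)$ via the 1-Lipschitz inequality against comparison points on suitable $f$-geodesics, using the normal-coordinate expansion $d(\exp_p X,\exp_p Y) = F_p(Y-X) + O((|X|+|Y|)^2)$ and the Finsler Taylor formula $F_p(\tau w + u) = \tau + g_w(w,u) + O(1/\tau)$ as $\tau\to\infty$ (which follows from $1$-homogeneity of $F_p$ and the identity $g_w(w,\cdot)=dF_p|_w$ when $F_p(w)=1$).  Comparing $c(t)$ to a forward point on $\gamma_{c(t)}$, or to a backward point on $\gamma$ itself, depending on the orientation supplied by the consistency corollary, yields the upper estimate $\tfrac{f(c(t))-f(p)}{t}\le F_p(v+\tau w)-\tau+o(1)$, whose limit as $t\to 0^+$ and then $\tau\to\infty$ is $g_w(w,v)$; the dual comparison yields the matching lower estimate $\tfrac{f(c(t))-f(p)}{t}\ge \tau-F_p(\tau w-v)+o(1)\to g_w(w,v)$.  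A symmetric treatment of $t\to 0^-$ through the reparametrization $s\mapsto c(-s)$ completes the argument.  The crux of the whole proof is the convergence lemma paired with this case analysis: the asymmetry in the almost distance function definition (where the endpoint of $\gamma_q$ coinciding with $q$ may be initial or terminal) must be reconciled with the orientation of $\gamma$ at $p$, and it is precisely the uniqueness of $\gamma$ that forces this reconciliation so that the direction needed for each comparison is indeed available.
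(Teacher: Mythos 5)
Your proof is correct in both directions and, for the direction ``unique $f$-geodesic $\Rightarrow$ differentiable,'' it follows a genuinely different route than the paper.  For the converse direction (``differentiable $\Rightarrow$ unique''), your Legendre-duality / strong-convexity argument is essentially the same as the paper's appeal to the Cauchy--Schwarz inequality for Finsler norms and its equality case.  For the harder direction, the paper proves the directional-derivative formula (Proposition~\ref{generalized first variation formula for almost distance functions}) via Lemmas~\ref{primitive first variation formula, ver.1} and~\ref{primitive first variation formula, ver.2}, whose core is the \emph{first variation formula} \eqref{first variation}: the difference $d(p,\gamma_i(\delta))-d(\gamma_i(0),\gamma_i(\delta))$ is written as an integral of $g_{w_i(t)}(w_i(t),\dot\sigma_i(t))$ that converges to $d_i\,g_{w_\infty}(w_\infty,v^f)$.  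You instead obtain the matching upper and lower estimates by writing the same difference via the \emph{normal-coordinate expansion} of the Finsler distance together with the Taylor expansion $F_p(\tau w+u)=\tau+g_w(w,u)+O(1/\tau)$; this is a more explicit, coordinate-based computation, morally performing the same comparison without invoking the first variation formula.  Both approaches need the same preliminaries: a compactness/continuity lemma identifying the limits of $f$-geodesics at nearby points with the unique one at $p$ (your convergence lemma, the paper's Lemmas~\ref{primitive first variation formula, ver.1}--\ref{primitive first variation formula, ver.2}), a reconciliation of orientation for endpoint cases (your consistency corollary; the paper uses the concatenation property of Lemma~\ref{lem: f-geod are smooth at int points} to exclude the wrong orientation from the limit), and the upgrade from a linear directional derivative of a locally Lipschitz function to a Fréchet differential (the paper cites \cite[Lemma 2.4]{ST}).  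One small correction: the error term in the normal-coordinate expansion $d(\exp_p X,\exp_p Y)=F_p(Y-X)+O\bigl((|X|+|Y|)^2\bigr)$ is claimed too strongly, since the Finsler exponential map is only $C^1$ at the zero section; the weaker estimate $d(\exp_p X,\exp_p Y)=F_p(Y-X)+o(|X|+|Y|)$ is the one available in general, and it is sufficient in your sandwich (the $o(\cdot)$ divided by $t$ vanishes as $t\to0^+$ with $\tau$ fixed, after which $\tau\to\infty$ picks out $g_w(w,v)$).  What the paper's route buys is coordinate-free intrinsic arguments and explicit uniform convergence of the first-variation integrands; what your route buys is a more elementary, self-contained derivation accessible to readers already comfortable with Minkowski-norm Taylor expansions, at the modest cost of needing care with the asymmetry of $F_p$ and the limited regularity of $\exp_p$.
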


The following theorem generalizes Theorem B in \cite{ST}.

\begin{MTB}
Let $f$ be an almost distance function on a bi-complete  2-dimensional Finsler manifold $M.$
Then, the singular locus $ \cC(f)\cap f^{-1}(\inf f,\sup f)$ in $f^{-1}(\inf f,\sup f)$ of $f$ satisfies the following properties.
\begin{enumerate}
\item
The set $\cC(f)\cap f^{-1}(\inf f,\sup f)$ is a local tree and any  two points in the same connected component can be joined by a rectifiable curve in $\cC(f)\cap f^{-1}(\inf f,\sup f).$
\item
The topology of $\cC(f)\cap f^{-1}(\inf f,\sup f)$ induced from the intrinsic metric $\delta$
 coincides with the induced topology of $\cC(f)\cap f^{-1}(\inf f,\sup f)$ from $M$. 
\item
The set $\cC(f)\cap f^{-1}(\inf f,\sup f)$ is a union of countably many rectifiable Jordan arcs except for the end points of $\cC(f)\cap f^{-1}(\inf f,\sup f).$
\end{enumerate}
\end{MTB}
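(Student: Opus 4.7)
The plan is to adapt the approach of \cite[Theorem B]{ST} from the Busemann function to a general almost distance function $f$, exploiting Theorem A to reduce the analysis to a local study near each singular point. By Theorem A, a point $p\in f^{-1}(\inf f,\sup f)$ lies in $\cC(f)$ precisely when $f$ fails to be differentiable at $p$, which amounts to $p$ being an endpoint of some maximal $f$-geodesic or carrying at least two distinct $f$-geodesics. I would fix a singular point $q$ and work inside a small forward- and backward-convex disk $D\subset f^{-1}(\inf f,\sup f)$ on which the Isotopy Lemma and the Model Lemma(s) can be applied.

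For Part (1), I would consider the set $\Theta(q)\subset T_qM$ of unit velocity vectors at $q$ of maximal $f$-geodesics through $q$. Compactness of $\Theta(q)$ follows from the 1-Lipschitz property of $f$ and bi-completeness of $M$, and in dimension two the unit tangent circle is one-dimensional, so its complement (together with opposite directions) is a union of at most countably many open arcs. Each such arc $A$ sweeps out an open sector $S_A\subset D$ foliated by a continuous family of $f$-geodesics via the Isotopy Lemma, and $S_A$ contains no singular points. The boundary of $S_A$ in $D$ consists of $f$-geodesic segments from $q$ between which $\cC(f)\cap D$ must lie. This yields the local-tree description: a simple loop in $\cC(f)\cap D$ would enclose a disk foliated by $f$-geodesics, and an $f$-geodesic from outside could enter the enclosed region only through another singular point, violating the unique continuation of $f$-geodesics off $\cC(f)$ afforded by Theorem A.

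Rectifiability, the joinability of points in the same component by a rectifiable curve, and Part (2) will then follow from the claim that each branch of $\cC(f)\cap D$ emanating from $q$ admits a parametrization by $f$. Indeed, along any such branch $c$ the restriction $f\circ c$ is 1-Lipschitz; strict monotonicity is forced because two singular points sharing the same $f$-value would otherwise be joined by an $f$-geodesic from the sector foliation, contradicting singularity. The derivative bound supplied by the Model Lemma then controls the arc length of $c$ by a uniform constant times its $f$-height, giving local rectifiability as well as the coincidence of the intrinsic metric topology with the subspace topology. For Part (3) I would use the separability of $M$ together with the fact that genuine branch points of $\cC(f)$ (those with at least three components of $\Theta(q)$) form a countable set, and then enumerate the Jordan arcs delimited by successive branch points or endpoints.

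The main obstacle is the sector decomposition near each singular point: one must prove rigorously that between two adjacent elements of $\Theta(q)$ the $f$-geodesics vary continuously and sweep out a true open sector, and that the boundary of such a sector is a single locally rectifiable branch of $\cC(f)$. In the classical distance or Busemann settings, this step is handled by explicit geodesic-convexity arguments, but for a general almost distance function only the axiomatic properties built into the definition are available, so the required continuous dependence of $f$-geodesics on their initial data must be extracted entirely from the Isotopy Lemma and the Model Lemmas, together with the characterization of differentiability in Theorem A.
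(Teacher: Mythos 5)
Your proposal takes a genuinely different route from the paper, and it also contains a real gap that you flag yourself at the end but do not close. You try to rebuild the structure theory of the singular locus from scratch: a sector decomposition near each singular point, continuous dependence of $f$-geodesics on initial data, a parametrization of each branch by $f$, and derivative bounds from a ``Model Lemma.'' None of these tools is established in this paper for a general almost distance function, and it is not clear they hold verbatim: the whole point of Section~7 is that the singular locus of an almost distance function can be strictly wilder than that of a distance or Busemann function (e.g.\ a point in $\overline{\cC_+(f)}\cap\overline{\cC_-(f)}\setminus\cC(f)$), so the classical sector/foliation arguments you invoke would need to be re-derived in this more general setting, which is substantial work, not a remark.

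The paper instead proves Theorem B by \emph{localization and reduction}, which sidesteps this entirely. Theorem~\ref{thm:cut locus and distance properties} shows that near any $p\in\cC_+(f)\cap f^{-1}(\inf f,\sup f)$ there is a neighborhood $V_p$ and a sublevel set $M^{a_0}_f$ with $\cC_+(f)\cap V_p=\cC(M^{a_0}_f)\cap V_p$, and Theorem~\ref{thm:cut locus and distance properties 2} gives the dual identification $\cC_-(f)\cap V_p=\cC_{rev}({}^{b_0}M_f)\cap V_p$ with the \emph{reverse} cut locus of a superlevel set. Thus locally the singular locus \emph{is} the cut locus (or reverse cut locus) of a closed subset of a bi-complete Finsler surface, and properties (1)--(3) follow directly by quoting Theorem~B of \cite{ST}; separability of $M$ gives a countable cover by such neighborhoods $V_{p_i}$, and rectifiability of the Jordan arcs follows from \cite[Theorem 6.4]{ST}. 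The key idea you are missing is precisely this reduction: the role of Theorems~\ref{thm:cut locus and distance properties} and \ref{thm:cut locus and distance properties 2} in the paper is to make Theorem~B a corollary of the closed-subset case rather than something to be reproved. Your approach, if completed, would essentially be reproving \cite{ST} under weaker hypotheses, and the ``main obstacle'' you name at the end is exactly where the argument currently fails.
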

\begin{remark}
 Nasu \cite{N2}  investigates the topological structure of the singular locus of a Busemann function on a finitely connected 2-dimensional Finsler manifold (more generally a Busemann G-space)  of nonpositive curvature.
 K. Shiohama and the present author \cite{ShT} proved Theorem B for the distance function from a compact subset of an Alexandrov surface and for the Busemann function on a noncompact  Alexandrov surface. 
\end{remark}

The following theorem is called {\it Sard theorem for an almost distance function}.
Here  a  point is called  a {\it critical point} (in the sense of Clarke)  of an almost distance function $f$ if
 the {\it generalized differential }of $f$ at $p$ contains  the zero 1-form.
The detailed definition is given in Section 6.

For  the distance function $d_N$ from a 
 closed smooth  submanifold $N$ of an  n-dimensional   complete Riemannian manifold,  it is proved that the set of 
all  critical values of $d_N$ is of measure zero, i.e., J. Itoh and 
the present author  \cite{IT} proved it  when $n<5$  and  L. Rifford  \cite{R}  generalized it  for any $n.$

\begin{MTC}
Let $f$ be an almost distance function on a bi-complete  2-dimensional Finsler manifold $M.$
Then,  the set $C_V(f)$ of critical values of $f$ is of measure zero.
\end{MTC}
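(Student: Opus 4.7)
The plan is to exploit the one-dimensional structure of the singular locus supplied by Theorem B and reduce the statement to a classical Sard-type lemma for Lipschitz functions on an interval. The first step is to localize the critical set. By Theorem A, if $f$ is classically differentiable at a point $p\in f^{-1}(\inf f,\sup f)$, then $df_p(\dot\gamma(f(p)))=g_{\dot\gamma}(\dot\gamma,\dot\gamma)=1$ along the unique maximal $f$-geodesic $\gamma$ through $p$; in particular $df_p\ne 0$, so $p$ cannot be critical. Hence every critical point in $f^{-1}(\inf f,\sup f)$ lies in $\cC(f)\cap f^{-1}(\inf f,\sup f)$, while the two possibly additional critical values $\inf f,\sup f$ form a null set that may be disregarded.

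By Theorem B(3), apart from an at most countable set of endpoints, $\cC(f)\cap f^{-1}(\inf f,\sup f)$ is a countable union of rectifiable Jordan arcs $\{J_n\}$. Parameterize each $J_n$ by arclength as $\gamma_n\colon[0,L_n]\to M$; then $\gamma_n$ is $1$-Lipschitz, and so is $h_n:=f\circ\gamma_n\colon[0,L_n]\to\R$. By Rademacher's theorem, both $\gamma_n$ and $h_n$ are differentiable almost everywhere. Let $A_n\subset[0,L_n]$ denote the set of parameters $s$ at which $\gamma_n(s)$ is a Clarke-critical point of $f$. Applying Clarke's chain rule to the composition of the Lipschitz curve $\gamma_n$ with the locally Lipschitz function $f$ gives, at every $s$ where $\gamma_n$ is differentiable,
\[
\partial h_n(s)\subseteq\{\omega(\dot\gamma_n(s)):\omega\in\partial f(\gamma_n(s))\}.
\]
Since $0\in\partial f(\gamma_n(s))$ for $s\in A_n$, we obtain $0\in\partial h_n(s)$; at every such $s$ at which $h_n$ is additionally classically differentiable, this forces $h_n'(s)=0$. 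The classical one-dimensional Sard lemma for Lipschitz functions now says that $h_n\bigl(\{s\in[0,L_n]:h_n'(s)=0\}\bigr)$ has Lebesgue measure zero, and since $h_n$ is Lipschitz it also sends the null set of its non-differentiability points to a null set. Therefore $h_n(A_n)$ is null, and countable subadditivity over $n$ gives $C_V(f)$ of measure zero.

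The principal technical obstacle is establishing Clarke's chain-rule inclusion in the Finsler setting and giving an explicit description of the generalized differential $\partial f(p)$ at points $p\in\cC(f)$ in terms of the unit tangents of the $f$-geodesics through $p$, so that the implication ``$p$ critical $\Rightarrow h_n'(s)=0$ wherever both $\gamma_n$ and $h_n$ are differentiable'' can be made rigorous. The bi-complete, two-dimensional hypothesis enters only through Theorem B, which furnishes the rectifiable-arc decomposition required for the reduction; once these ingredients are in place, the measure-theoretic conclusion is a routine application of one-dimensional real analysis.
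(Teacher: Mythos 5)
Your overall strategy—localize the critical set to the singular locus via Theorem A, decompose the singular locus into countably many rectifiable arcs via Theorem B(3), and apply the one--dimensional Sard lemma for Lipschitz functions to the compositions $f\circ\gamma_n$—is essentially the paper's route. But the crucial middle step, where you must show that $(f\circ\gamma_n)'(s)=0$ at a.e.\ parameter $s$ with $\gamma_n(s)$ Clarke-critical, does not hold up as you have argued it, and there is a second, independent issue with the endpoints.

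The chain-rule argument is logically inverted. You assert the inclusion $\partial h_n(s)\subseteq\{\omega(\dot\gamma_n(s)):\omega\in\partial f(\gamma_n(s))\}$ and then infer $0\in\partial h_n(s)$ from $0\in\partial f(\gamma_n(s))$. That inference needs the \emph{reverse} inclusion (or equality), which Clarke's chain rule supplies only under extra regularity hypotheses (Clarke-regularity of $f$, or surjectivity of the differential of the inner map) that are not available here. Moreover, Clarke's chain rule requires the inner map to be strictly differentiable, not merely differentiable, and a Lipschitz arclength parameterization of a rectifiable Jordan arc need not be strictly differentiable almost everywhere. Finally, even if you had $0\in\partial h_n(s)$, mere classical differentiability of $h_n$ at $s$ does not make $\partial h_n(s)$ a singleton, so you cannot conclude $h_n'(s)=0$. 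The paper avoids all of this by proving directly (Lemma \ref{lem6.6}, via the first variation formula) the much stronger identity $(f\circ c)'(t_0)=\omega_p(\gamma)(\dot c(t_0))$ for \emph{every} $f$-geodesic $\gamma$ through $p=c(t_0)$; combined with the explicit description $\partial f_p={\rm co}\{\omega_p(\gamma)\}$ of Lemma \ref{lem6.5}, Clarke-criticality ($0=\sum\lambda_i\omega_p(\gamma_i)$) immediately forces $(f\circ c)'(t_0)=\sum\lambda_i\omega_p(\gamma_i)(\dot c(t_0))=0$. This first-variation lemma is not a formal chain rule; it is a geometric fact special to curves in the singular locus, and it is precisely the ingredient your argument is missing.

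Separately, you discard "an at most countable set of endpoints," but Theorem B(3) does not assert that the set $E$ of endpoints of the local tree is countable—only that the singular locus minus endpoints is a countable union of arcs; a local tree can have uncountably many endpoints. You need to observe, as the paper does, that an endpoint with a \emph{unique} $f$-geodesic satisfies $\partial f_p=\{\omega_p(\gamma)\}\ne\{0\}$ and so is not Clarke-critical, reducing the issue to $E^{(2)}$, the set of endpoints admitting more than one $f$-geodesic. The paper then invokes the structure theorems from \cite{ShT} and \cite{ST} (via Theorems \ref{thm:cut locus and distance properties} and \ref{thm:cut locus and distance properties 2}) to conclude $E^{(2)}$ is countable, so $f(E^{(2)})$ is null. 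Without that reduction, a null set of endpoints in $M$ need not have null image under the merely Lipschitz $f:M\to\R$, so the endpoint contribution cannot be disregarded without further argument.
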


\begin{remark}
Theorem C is still true for an arbitrary dimensional Riemannian manifold if we add  some  assumptions to the function $f$ (see Theorem \ref{remThC}).
\end{remark}

As a corollary to Theorem C, we get
\begin{CMTC}
For each 
$t\in(\inf f,\sup f)\setminus{C_V(f)}, $
the level set   $f^{-1}(t)$ consists of locally finitely many mutually disjoint 
curves which are locally bi-Lipschitz homeomorphic  to an interval.
\end{CMTC}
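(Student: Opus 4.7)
The plan is to deduce the corollary from Theorem C by showing that, at each regular value $t$, the level set $f^{-1}(t)$ is locally the graph of a Lipschitz function on an interval; both the curve structure and the local finiteness will then follow at once.

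First I would fix a point $p\in f^{-1}(t)$. Since $t\notin C_V(f)$, Clarke's generalized differential $\partial f(p)\subset T_p^*M$ does not contain the origin. Because $\partial f(p)$ is convex and compact, the separating hyperplane theorem supplies a vector $v\in T_pM$ and a constant $c>0$ with $\xi(v)\ge c$ for every $\xi\in\partial f(p)$. I would then choose local coordinates $(x^1,x^2)$ centered at $p$ with $v=\partial/\partial x^1|_p$, and use upper semicontinuity of $\partial f$ to shrink the coordinate neighborhood $U$ so that $\xi(\partial/\partial x^1|_q)\ge c/2$ for every $q\in U$ and every $\xi\in\partial f(q)$. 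A Lebourg-type mean value argument then shows that, for each fixed $x^2$, the slice $x^1\mapsto f(x^1,x^2)$ is strictly increasing and bi-Lipschitz with rates $c/2$ and $L$, where $L$ is the local Lipschitz constant of $f$. Solving $f(x^1,x^2)=t$ yields a unique Lipschitz function $h(x^2)$ on a small interval about $0$, and $U\cap f^{-1}(t)$ coincides with the graph of $h$, which is bi-Lipschitz homeomorphic to an interval.

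Second I would treat local finiteness. If $q_0\notin f^{-1}(t)$, continuity of $f$ gives a neighborhood missing the level set. If $q_0\in f^{-1}(t)$, the neighborhood $U$ constructed above meets $f^{-1}(t)$ in a single arc, hence in a single connected component. Were a relatively compact open set to intersect infinitely many distinct components of $f^{-1}(t)$, extracting one representative from each and passing to a convergent subsequence would produce a limit $q^*\in f^{-1}(t)$ (using closedness of the level set); for large $n$ the representatives would all lie in the single-arc neighborhood of $q^*$, contradicting pairwise distinctness of their components.

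The main obstacle I anticipate is the rigorous invocation of the Clarke framework in the first step---specifically, upper semicontinuity of $\partial f$ and the mean value theorem---since these ideas are developed only in Section 6 of the paper. Once a local chart is fixed, however, the remainder of the argument is elementary and does not exploit any special structure of $f$ as an almost distance function beyond the Lipschitz property and the regularity assumption $0\notin\partial f(p)$.
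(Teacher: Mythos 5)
Your proof is correct, and its overall strategy agrees with the paper's: the corollary is deduced by applying the Lipschitz implicit function theorem (Theorem \ref{th6.4}) at each point of the level set, using $t\notin C_V(f)$ to guarantee nonsingularity. Where you differ is in how the implicit function theorem is obtained. The paper simply cites Theorem \ref{th6.4}, presenting it as a corollary of Clarke's inverse function theorem \cite{CF1} and then declaring the corollary ``clear.'' You instead give a direct, self-contained proof in the two-dimensional setting: separate $0$ from the compact convex set $\partial f_p$ to find a direction $v$ of uniform monotonicity, propagate the estimate to a neighborhood by upper semicontinuity of $\partial f$, use Lebourg's mean value theorem to get bi-Lipschitz monotonicity of the slice $x^1\mapsto f(x^1,x^2)$, and solve for $x^1=h(x^2)$. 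This avoids the full inverse function theorem (and the associated bi-Lipschitz coordinate change), at the cost of invoking two further standard facts of Clarke's theory (upper semicontinuity of $\partial f$ and Lebourg's MVT) that the paper does not mention explicitly; both are in \cite{CF2}, so no gap results. You also make the local-finiteness claim explicit via a compactness argument, which the paper leaves implicit but which is needed for the ``locally finitely many'' part of the statement; note that when you extract a representative from each component near $q^*$, the point is that all such representatives eventually lie on the single arc $f^{-1}(t)\cap U_{q^*}$, hence in one component, which is exactly the contradiction you want. In short: same key lemma, different (and more detailed) route to it, with your version being more elementary and more complete on local finiteness.
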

\begin{remark}
S. Ferry \cite{Fe} proved that the critical values of the distance function $d_N$  is of measure zero for a closed subset $N$  of $\R^n, n\leq3,$ and  he constructed  a closed subset  $N$ of $\R^4$ such that  the level set $d_N^{-1}(t)$ is not a submanifold  for all $t\in(0,1).$ 
Fu \cite{Fu} improved his result. 
\end{remark}

\bigskip

Let us recall that  a {\it Finsler manifold} $(M,F)$ is an $n$-dimensional differential manifold $M$ endowed with a norm
$F:TM\to [0,\infty)$ such that
\begin{enumerate}
	\item $F$ is positive and differentiable on $\widetilde{TM}:=
	TM\setminus\{0\}$;
	\item $F$ is 1-positive homogeneous, i.e., $F(x,\lambda y)=\lambda F(x,y)$, $\lambda>0$, $(x,y)\in TM$;
	\item the Hessian matrix $g_{ij}(x,y):=\dfrac{1}{2}\dfrac{\partial^2 F^2}{\partial y^i\partial y^j}$ is positive definite on $\widetilde{TM}.$
\end{enumerate}
Here $TM$ denotes the tangent bundle over the manifold $M.$
The Finsler structure is called {\it absolute homogeneous} if $F(x,-y)=F(x,y)$ because this leads to the homogeneity condition $F(x,\lambda y)=|\lambda| F(x,y)$, for any $\lambda\in \mathbb R$.

By means of the Finsler fundamental function $F$ one defines the {\it indicatrix bundle} (or the Finslerian {\it unit sphere bundle}) by $SM:=\bigcup_{x\in M}S_xM$, where $S_xM:=\{y\in M\ | \ F(x,y)=1\}$.

On a Finsler manifold $(M,F)$ one can  define the integral length of curves as follows. If $\gamma:[a,b]\to M$ is a  $C^{1}$-curve in $M$,
then the
{\it integral length} of $\gamma$ is given by
\begin{equation}\label{integral length}
{\cal L_\gamma}:=
\int_a^b
F(\gamma(t),\dot\gamma(t))dt,
\end{equation}
where $\dot\gamma=\dfrac{d\gamma}{dt}$ denotes the tangent vector along the curve $\gamma$. 

For a  $C^1$-{\it variation} 
\begin{equation}
\bar \gamma:(-\varepsilon,\varepsilon)\times[a,b]\to M,\quad (u,t)\mapsto \bar\gamma(u,t)
\end{equation}
of the base curve $\gamma(t)$, with the {\it variational vector field} 
$U(t):=\dfrac{\partial\bar\gamma}{\partial u}(0,t)$,  
by a straightforward computation
one obtains
\begin{equation}\label{arbitrary first variation} 
({\cal L_\gamma})'(0)=g_{\dot\gamma(b)}(\dot \gamma,U)|_a^b-\int_a^bg_{\dot\gamma}(D_{\dot\gamma}{\dot\gamma},U)dt,
\end{equation}
where $D_{\dot\gamma}$ is the covariant derivative along $\gamma$ with respect to the Chern connection and $\gamma$ is arc length parametrized
(see \cite{BCS}, p. 123, or \cite{S}, p. 77 for details of this computation as well as for some basics on Finslerian connections).

A regular $C^\infty$-curve $\gamma$ on a Finsler manifold is called a {\it geodesic} if $({\cal L}_\gamma)'(0)=0$ for all
$C^1$-variations of $\gamma$ that keep its ends fixed.
In terms of Chern connection a constant speed geodesic is characterized by the condition $D_{\dot\gamma}{\dot\gamma}=0$.

If the base curve $\gamma$ is a geodesic for the variation $\overline\gamma$ above,  
one obtains, by \eqref{arbitrary first variation},
 the following {\it first variation formula}:
\begin{equation}\label{first variation}
({\cal L_\gamma})'(0)=g_{\dot\gamma(b)}({\dot\gamma(b)},U(b))-g_{\dot\gamma(a)}({\dot\gamma(a)},U(a))
\end{equation}
which is fundamental for our present study. 

Using the integral length of a curve, one can define the Finslerian distance between two points on $M$. For any two points $p$, $q$ on $M$, let us denote by $\Omega_{p,q}$ the set of all $C^1$-curves $\gamma:[a,b]\to M$ such that $\gamma(a)=p$ and $\gamma(b)=q$. The map
\begin{equation}
d:M\times M\to [0,\infty),\qquad d(p,q):=\inf_{\gamma\in\Omega_{p,q}}{\cal L}_\gamma
\end{equation}
gives the {\it Finslerian distance} on $M$. It can be easily seen that $d$ is in general a quasi-distance, i.e., it has the properties
\begin{enumerate}
	\item $d(p,q)\geq 0$, with equality if and only if $p=q$;
	\item $d(p,q)\leq d(p,r)+d(r,q)$, with equality if and only if  $r$ lies on a
	minimal geodesic segment joining  $p$ to $q$ (triangle inequality).
\end{enumerate}

In the case where $(M,F)$ is absolutely homogeneous, the symmetry condition $d(p,q)=d(q,p)$ holds and therefore $(M,d)$ is a  metric space. We do not assume this symmetry condition in the present paper.

We recall (\cite{BCS}, p. 151) that a sequence of points $\{x_{i}\}\subset M$, on a Finsler manifold $(M,F)$, is called a {\it forward Cauchy sequence} if for any $\varepsilon>0$, there exists $N=N(\varepsilon)>0$ such that for all $N\leq i < j$ we have $d(x_{i},x_{j})<\varepsilon$. Likely, a sequence of points $\{x_{i}\}\subset M$ is called a {\it backward Cauchy sequence} if for any $\varepsilon>0$, there exists $N=N(\varepsilon)>0$ such that for all $N\leq i < j$ we have $d(x_{j},x_{i})<\varepsilon$.
The Finsler space $(M,F)$ is called {\it forward (backward) complete} with respect to the Finsler distance $d$ if and only if every forward (backward) Cauchy sequence converges, respectively.
Moreover, a Finsler manifold $(M,F)$ is called {\it forward (respectively backward) geodesically complete} if and only if any geodesic $\gamma:(a,b)\to M$ can be forwardly (respectively backwardly) extended to a  geodesic $\gamma:(a,\infty)\to M$ (respectively $\gamma:(-\infty,b)\to M$). 
The equivalence between forward completeness and geodesically completeness is given by the Finslerian version of Hopf-Rinow Theorem (see for eg. \cite{BCS}, p. 168).

Let us point out that in the Finsler case, unlikely the Riemannian counterpart, forward completeness is not equivalent to backward one, except the case when $M$ is compact. A Finsler metric that is forward and backward complete is called {\it bi-complete}.

Let us also recall that for a forward complete Finsler space $(M,F)$, the exponential map $\exp_p:T_pM\to M$ at an arbitrary point $p\in M$ is a surjective map (see \cite{BCS}, p. 152 for details). 

A unit speed geodesic on $M$ with initial conditions $\gamma(0)=p\in M$ and $\dot\gamma(0)=T\in S_pM$ can be written as $\gamma(t)=\exp_p(tT).$
Even though the exponential map is quite similar with the correspondent notion in Riemannian geometry, we point out two distinguished properties (see \cite{BCS}, p. 127 for details):
\begin{enumerate}
	\item $\exp_x$ is only $C^1$ at the zero section of $TM$, i.e. for each fixed $x$, the map $\exp_xy$ is $C^1$ with respect to $y\in T_xM$, and $C^\infty$ away from it. Its derivative at the zero section is the identity map (\cite{W});
	\item $\exp_x$ is $C^2$ at the zero section of $TM$ if and only if the Finsler structure is of Berwald type. In this case $\exp$ is actually $C^\infty$ on entire $TM$ (\cite{AZ}).
\end{enumerate}

\section{1-Lipschitz functions and $f$-geodesics }
In this section we will give  an equivalent condition for a  function on a connected 
Finsler manifold to be 1-Lipschitz and some important examples of a 1-Lipschitz function on a manifold.

From now on,
 $(M,F)$ denotes a connected  Finsler manifold with Finslerian distance function $d.$

\begin{definition}
A function $f$  on the manifold $M$ is said to be {\it 1-Lipschitz} if 
\begin{equation}\label{eq: Lip}
f(y)-f(x)\leq d(x,y)
\end{equation}
 holds for any $x,y\in M.$
\end{definition}



 Let $f$ be a 1-Lipschitz function on the manifold $M.$
By exchanging $x$ and $y$ in the equation \eqref{eq: Lip}, 
we get
that
\begin{equation}\label{eq: Lip2}
-d(y,x)\leq f(y)-f(x)
\end{equation}
holds for all $x,y$ of $M.$
From \eqref{eq: Lip} and \eqref{eq: Lip2} it follows that
$f\circ \varphi^{-1} :\varphi(U)\to \R$
is locally Lipschitz for any local chart $(U,\varphi)$ of $M.$

From  Rademacher's theorem (for example, see \cite{M}), $f\circ\varphi^{-1}$ is differentiable almost everywhere on $\varphi(U),$ and 
hence the Finslerian gradient $\nabla f$ of $f$  exists almost everywhere on $M.$ See \cite{S} for the definition and basic properties of the Finslerian gradient.

Choose any differentiable point $p$ of $f$ and any unit tangent vector $v$ at $p.$
Let $\gamma :[0,a]\to M$ denote the minimal geodesic segment defined by
$\gamma(t):=\exp_p(tv).$
From \eqref{eq: Lip}, we have
$f(\gamma(t))-f(\gamma(0))\leq d(\gamma(0),\gamma(t))=t$
for $t\in(0,a).$
Thus,
\begin{equation}\label{eq: grad f}
df_p(v)=\lim_{t\searrow0}\frac{f(\gamma(t))-f(\gamma(0))}{t}\leq 1.
\end{equation}
If  $df_p\ne 0,$ 
then  we obtain, from \eqref{eq: grad f},
\begin{equation}\label{eq: grad2}
g_{\nabla f_p}(\nabla f_p,v)\leq 1
\end{equation}
for all unit tangent vectors $v$ at $p.$
By substituting $v=\nabla f_p/F(\nabla f_p)$ in the equation \eqref{eq: grad2}
we get $F(\nabla f_p)\leq 1.$
If $ df_p=0,$ then it is trivial that $F(\nabla f_p)=0\leq 1.$
Therefore, $\nabla f$ exists  almost everywhere and 
$F(\nabla f_p)\leq 1 $ if $\nabla f_p$ exists. 

We may prove the converse by imitating the proof of \cite [Lemma 3.13] {KT1}, and hence we get

\begin{proposition}\label{prop: equivalence}
Let $f$ be a function defined on a connected  Finsler manifold $M$.
Then,
the Finslerian gradient
$\nabla f$ of
the function
$f$ exists almost everywhere and $F(\nabla f)\leq 1$   almost everywhere 
if and only if $f$ is 1-Lipschitz.
\end{proposition}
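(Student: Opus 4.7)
I will prove the nontrivial converse: assuming $\nabla f$ exists almost everywhere on $M$ with $F(\nabla f)\le 1$ a.e., the goal is to establish $f(y)-f(x)\le d(x,y)$ for all $x,y\in M$. The strategy is to estimate $f(y)-f(x)$ by integrating $df$ along a nearly minimal curve joining $x$ to $y$, and to reconcile the a.e.\ nature of the gradient hypothesis with evaluation along a one-dimensional curve via a Fubini-type perturbation argument, following the pattern of \cite[Lemma 3.13]{KT1}.

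The first step is a local reduction: by connectedness of $M$ and by approximating $d(x,y)$ with broken geodesics, it suffices to work with $x,y$ close enough to lie in a common coordinate chart diffeomorphic to a convex open subset of $\R^n$. Fix $\ve>0$ and a smooth curve $\gamma_0:[0,L_0]\to M$ from $x$ to $y$ of length $L_0<d(x,y)+\ve$. I would embed $\gamma_0$ into a smooth $(n-1)$-parameter family $\{\gamma_u\}_{u\in V}$ of curves with common endpoints $\gamma_u(0)=x$ and $\gamma_u(L_u)=y$, chosen so that $(u,t)\mapsto\gamma_u(t)$ is a diffeomorphism onto a neighborhood of the image of $\gamma_0$ and so that $L_u\to L_0$ as $u\to 0$. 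By Fubini, for almost every $u$ the curve $\gamma_u$ meets the measure-zero bad set $\{p:\nabla f_p \text{ fails to exist or } F(\nabla f_p)>1\}$ in a subset of one-dimensional measure zero. For such a good $u$, the composite $f\circ\gamma_u$ is differentiable a.e.\ with
\begin{equation*}
\tfrac{d}{dt}(f\circ\gamma_u)(t)=df_{\gamma_u(t)}(\dot\gamma_u(t))=g_{\nabla f}(\nabla f,\dot\gamma_u)\le F(\nabla f)\cdot F(\dot\gamma_u)\le F(\dot\gamma_u),
\end{equation*}
by the Cauchy--Schwarz-type inequality for the fundamental tensor $g$ together with the a.e.\ bound $F(\nabla f)\le 1$.

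The main technical obstacle is upgrading this pointwise a.e.\ bound into a genuine fundamental-theorem-of-calculus identity $f(\gamma_u(L_u))-f(\gamma_u(0))=\int_0^{L_u}\tfrac{d}{dt}(f\circ\gamma_u)(t)\,dt$, that is, showing that $f\circ\gamma_u$ is absolutely continuous (mere differentiability a.e.\ does not suffice, as the Cantor staircase illustrates). This is the ACL/Sobolev ingredient in the proof of \cite[Lemma 3.13]{KT1}: the a.e.\ bound $F(\nabla f)\le 1$ forces $f$ to have essentially bounded distributional derivative in local charts, hence to coincide almost everywhere with a locally Lipschitz representative whose restriction to any smooth curve is automatically absolutely continuous. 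Granting this, integration over $[0,L_u]$ for a good $u$ yields $f(y)-f(x)\le\cL_{\gamma_u}=L_u<d(x,y)+\ve$, and letting $\ve\to 0$ delivers $f(y)-f(x)\le d(x,y)$, completing the converse.
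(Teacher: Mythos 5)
The forward direction (1-Lipschitz implies $\nabla f$ exists a.e.\ with $F(\nabla f)\le 1$) is what the paper actually proves in the paragraphs preceding the proposition; the converse is only cited to \cite[Lemma 3.13]{KT1}. You have chosen to flesh out the converse, and the overall Fubini-plus-integration skeleton is the right one, but there is a genuine gap precisely at the step you yourself flag as the ``main technical obstacle,'' and the sentence you offer to close it is circular.

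You correctly observe that a.e.\ differentiability with an a.e.\ bound is not enough to integrate $df$ along a curve, citing the Cantor staircase. But your fix---``the a.e.\ bound $F(\nabla f)\le 1$ forces $f$ to have essentially bounded distributional derivative in local charts''---is exactly what the Cantor staircase refutes. The Cantor function $g$ on $[0,1]$ has classical derivative $g'(t)=0$ at almost every $t$ (so the pointwise gradient exists a.e.\ and has norm $\le 1$ a.e.), yet its distributional derivative is the singular Cantor measure, not an $L^\infty$ density, and $g$ is not Lipschitz. An a.e.\ pointwise gradient bound sees nothing of the singular part of the distributional derivative, so it cannot by itself deliver a Lipschitz (or Sobolev) representative. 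Consequently your claim that $f$ ``coincides almost everywhere with a locally Lipschitz representative'' does not follow from the stated hypotheses; and even if it did, the conclusion $f(y)-f(x)\le d(x,y)$ is a statement about the original $f$ at every point, not about an a.e.\ representative, so one would still need to identify $f$ with that representative pointwise, which again requires some continuity input.

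What makes the converse true in \cite[Lemma 3.13]{KT1} (and what the proposition must implicitly be assuming, as all the functions the paper applies it to are continuous and locally Lipschitz) is that $f$ is already known to be locally Lipschitz, or at least absolutely continuous on almost every line in a chart (ACL). Under that a priori regularity your argument goes through verbatim: the Fubini selection gives a full-measure family of parameters $u$ for which $f\circ\gamma_u$ is absolutely continuous and has a.e.\ derivative bounded by $F(\dot\gamma_u)$, and the fundamental theorem of calculus then yields $f(y)-f(x)\le L_u$. The missing idea is therefore not in the geometry or the Fubini device but in supplying (or, if one insists on the proposition exactly as stated, acknowledging that one must supply) an a priori absolute-continuity hypothesis; without it, the inference from an a.e.\ pointwise gradient bound to a Lipschitz estimate is false.
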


\begin{example}\label{exd_N}

Let $N$ be a closed subset of a connected  Finsler manifold $M.$
We define a function $d_N$ on $M$ 
by
$d_N(x):=\inf\{ d(p,x) |\: p\in N\}.$
The function $d_N$ is a 1-Lipschitz function on $M.$ Indeed,
choose any points $x,y$ of $M$ and any positive number $\epsilon.$
There exists a point $p_\epsilon\in N$ such that 
\begin{equation}\label{eqEd_N.1}
d(p_\epsilon,y)-\epsilon<d_N(y).
\end{equation}
Moreover, it is trivial that
\begin{equation}\label{eqEd_N.2}
d_N(x)\le d(p_\epsilon,x).
\end{equation}
Hence, by \eqref{eqEd_N.1}, \eqref{eqEd_N.2} and the triangle inequality,
we get,
$$ d_N(x)-d_N(y)<d(p_\epsilon,x)-d(p_\epsilon,y)+\epsilon\le d(y,x)+\epsilon.$$
Since $\epsilon$ is arbitrary,
we have
$d_N(x)-d_N(y)\le d(y,x).$
In particular,
if the subset $N$ is a single point $p,$
then the distance function $d_p$ from the point $p$ is 1-Lipschitz.
\end{example}

\begin{example}\label{exd^N}
We can introduce 
the function $d^N$  similar to $d_N$ defined by
$d^N(x):=\inf\{d(x,p)|\: p\in N \}.$
This function is not always a 1-Lipschitz function on $M,$
but $(-1)d^N$ is 1-Lipschitz.
\end{example}

\begin{example}\label{EBu}
A unit speed geodesic $\gamma:[0,\infty)\to M$ in a forward complete Finsler manifold $M$
is called a {\it ray} if $d(\gamma(0),\gamma(t))=t$ holds on $[0,\infty).$
Then, the  function $B_\gamma$  on $M$ is defined by
$B_\gamma(x):=\lim_{t\to\infty} \{t-d(x , \gamma(t))    \}$
is 1-Lipschitz. This function is called  the {\it Busemann function} of the ray $\gamma.$

\end{example}

\begin{example}\label{Emax,min}

Let $f_1$ and $f_2$ be 1-Lipschitz functions on  a connected Finsler manifold  $M.$
Then $f:=\max(f_1,f_2)$ is 1-Lipschitz.
Indeed, choose any points $x,y\in M.$ 
Without loss of generality, we may assume that $f(x)=f_1(x).$
Since $f(y)\ge f_1(y),$
we
obtain,
$f(x)-f(y)\le f_1(x)-f_1(y)\le d(y,x).$
It is also easy to prove  that $\min(f_1,f_2)$ is a 1-Lipschitz function.

\end{example}

\begin{example}\label{EgBu}
The Busemann function can be generalized as follows. Let us consider a divergent sequence of points $\{x_n\}_n$ on the forward complete noncompact Finsler manifold $(M,F)$, that is $\lim_{n\to \infty}d(x_1,x_n)=\infty$.
Then the function $f:M\to \R$ given by 
$$
f(x):=\limsup_{n\to \infty}[d(x_1,x_n)-d(x,x_n)] 
$$
is an 1-Lipschitz function. Notice that  the function $[d(x_1,x_n)-d(x,x_n)] $ is  
1-Lipschitz for each $n,$ since $(-1)d^{x_n}$ is 1-Lipschiz.
This function is called a   {\it horofunction} in \cite{Cu} for Riemannian case.
\end{example}

\begin{example}\label{EWu}
	The following function gives another interesting example of  a 1-Lipschitz  function on a Riemannian manifold, which was introduced in the paper \cite{Wu}.
 Let $M$ be a  complete noncompact Riemannian  manifold and let us define
	\begin{equation}
	\eta:M\to \R,\quad \eta(x):=\limsup_{t\to \infty}\{t-d(x,S_p(t))\},
	\end{equation}
	where $S_p(t):=\{x\in M | \; d(p,x)=t\},$  (see \cite{Wu} for a more general setting).
	The  triangle inequality implies that $\eta_t(x):=t-d(x,S_p(t))\leq d(p,x)$ for all $t>0,$  hence $\eta$ is well-defined. 
	For any fixed $t>0$ the function $\eta_t(x)$ is 1-Lipschitz, hence $\eta$ is also 1-Lipschitz. 
	 
\end{example}

From now on, $f$ always denote a 1-Lipschitz function on the connected Finsler  manifold $M.$

\begin{definition}\label{def f-geodesic}
A unit speed nonconstant geodesic  $\gamma : I\to M$ called an {\it $f$-geodesic} if
\begin{equation}\label{eq f-geodesic}
f(\gamma(t))-f(\gamma(s))=t-s
\end{equation}
for any $s,t\in I,$ where $I $ denotes an interval.
\end{definition}

 Such an $f$-geodesic is called {\it maximal} if there are no $f$-geodesics containing $\gamma(I)$ as a proper subarc. Notice that any $f$-geodesic is minimal. This property is proved in 
 Lemma \ref{lem: Lipschitz curve is f-geod}. Moreover, a sufficient condition for a unit speed   Lipschitz  curve to be an $f$-geodesic is given in this lemma.

\begin{lemma}\label{lem: Lipschitz curve is f-geod}
	If a unit speed Lipschitz curve $\gamma:[a,b]\to M,$ ( i.e. $|\dot \gamma(t)|=1$ a.e.) satisfies
	\begin{equation*}
\begin{split}
	b-a=f(\gamma(b))-f(\gamma(a))
	\end{split}
\end{equation*}
	then $\gamma$ is a minimal  geodesic segment and  an $f$-geodesic. 
In particular, any $f$-geodesic is minimal.
\end{lemma}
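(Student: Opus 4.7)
The plan is to show first that $\gamma$ realizes the distance between its endpoints, then invoke the standard regularity result for length-minimizers in Finsler geometry, and finally extract the $f$-geodesic identity by a double squeeze.

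First I would compute the Finslerian length of $\gamma$. Since $F(\dot\gamma(t))=1$ almost everywhere on $[a,b]$ and $\gamma$ is Lipschitz (so $\dot\gamma$ exists a.e.\ and \eqref{integral length} extends to Lipschitz curves via the absolute continuity of the arc length), we have $\mathcal{L}_\gamma=\int_a^b F(\gamma(t),\dot\gamma(t))\,dt=b-a$. On the other hand, the 1-Lipschitz property of $f$ gives
\begin{equation*}
b-a=f(\gamma(b))-f(\gamma(a))\leq d(\gamma(a),\gamma(b))\leq \mathcal{L}_\gamma=b-a,
\end{equation*}
so $d(\gamma(a),\gamma(b))=\mathcal{L}_\gamma=b-a$, i.e.\ $\gamma$ is a length-minimizer between its endpoints.

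Next I would conclude that $\gamma$ is a minimal geodesic segment. This is the step with the most substance: a unit speed Lipschitz curve whose length equals the Finsler distance between its endpoints is in fact a smooth unit speed geodesic. This is a classical regularity statement for Finsler minimizers (an analogue of the Riemannian fact, proved via a local coordinate argument using the strong convexity of $F^2$ given by condition (3) in the definition of a Finsler manifold, or alternatively deduced from the first variation formula \eqref{first variation} applied on small segments). I would simply cite this standard fact from \cite{BCS} or \cite{S}. This takes care of the first assertion of the lemma.

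For the $f$-geodesic property, I would use a two-sided squeeze. Fix $a\leq s<t\leq b$. The same argument as above applied to the restriction $\gamma|_{[s,t]}$ gives $\mathcal{L}_{\gamma|_{[s,t]}}=t-s$, hence $d(\gamma(s),\gamma(t))\leq t-s$, and by the 1-Lipschitz property of $f$,
\begin{equation*}
f(\gamma(t))-f(\gamma(s))\leq d(\gamma(s),\gamma(t))\leq t-s.
\end{equation*}
For the reverse inequality, split
\begin{equation*}
b-a=f(\gamma(b))-f(\gamma(a))=\bigl[f(\gamma(s))-f(\gamma(a))\bigr]+\bigl[f(\gamma(t))-f(\gamma(s))\bigr]+\bigl[f(\gamma(b))-f(\gamma(t))\bigr],
\end{equation*}
and apply the upper bound just derived to the first and third brackets, obtaining $f(\gamma(s))-f(\gamma(a))\leq s-a$ and $f(\gamma(b))-f(\gamma(t))\leq b-t$. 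Subtracting these from the equality forces $f(\gamma(t))-f(\gamma(s))\geq (b-a)-(s-a)-(b-t)=t-s$. Combined with the upper bound, this gives \eqref{eq f-geodesic}, so $\gamma$ is an $f$-geodesic.

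Finally, for the \textit{in particular} clause, any $f$-geodesic $\gamma:I\to M$ is by definition a unit speed (hence Lipschitz) curve satisfying $f(\gamma(t))-f(\gamma(s))=t-s$ on every compact subinterval $[s,t]\subset I$, so the first part of the lemma applies and shows that $d(\gamma(s),\gamma(t))=t-s$, which is exactly minimality. The only genuine obstacle in the whole argument is the regularity step (Lipschitz minimizer $\Rightarrow$ smooth geodesic), and I would handle it by appealing to the standard Finsler literature rather than reproving it.
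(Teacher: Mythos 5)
Your proposal is correct and takes essentially the same approach as the paper: both compute $\mathcal{L}_\gamma = b-a$, compare with the 1-Lipschitz bound to conclude $\gamma$ minimizes, and then run a two-sided squeeze to get \eqref{eq f-geodesic}. The paper packages the squeeze as monotonicity plus endpoint equality of $\varphi(t)=t-f(\gamma(t))$, whereas you do the telescoping subtraction directly, but these are the same argument in different notation; you are also slightly more explicit about the regularity step (Lipschitz minimizer $\Rightarrow$ smooth geodesic), which the paper leaves implicit.
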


\begin{proof}
The length $L(\gamma)$ of    the curve $\gamma$ is given by
$L(\gamma)=\int_a^bF(\dot\gamma(t))dt.$
Since $\gamma$ is unit speed, we get
 $L(\gamma)=b-a.$ Since $f$ is 1-Lipschitz, 
we get
$$L(\gamma)=b-a=f(\gamma(b))-f(\gamma(a))\leq d(\gamma(a),\gamma(b)).$$
Hence the curve  $\gamma$  is a minimal geodesic segment. 

In order to prove that $\gamma$ is an $f$-geodesic,
we will next prove that the function 
$\varphi(t):=t-f(\gamma(t))$ 
is an  increasing function on $ [a,b].$
Choose any $a\leq s<t\leq b.$
Since $f$ is 1-Lipschitz and the geodesic $\gamma$ is a unit speed minimal geodesic, we get
$\varphi(s)-\varphi(t)\leq s-t+d(\gamma(s),\gamma(t))=0.$
Thus, the function $\varphi$ is increasing on $[a,b].$
By combining  our assumption
$\varphi(a)=\varphi(b),$ we may conclude that 
 $\varphi$ is  constant and  the geodesic $\gamma$ satisfies \eqref{eq f-geodesic}  for any $s,t\in [a,b]$.
$\qedd$
\end{proof}

\begin{example}
Let $\gamma:[0,a]\to M$ be  a unit speed minimal geodesic segment emanating from  a point $p=\gamma(0)$ on a connected Finsler manifold $M.$
Then, it is clear that $d_p(\gamma(t))=t $ holds on $[0,a],$ where $d_p(x):=d(p,x)$ for each $x\in M.$ Hence $\gamma$ is a $d_p$-geodesic.
There does not always exist a minimal geodesic joining  the point $p$ to any point of $M,$ since $M$ is not assumed to be complete.
The following lemma  guarantees the local existence of a $d_p$-geodesic without assumption of the completeness of a  Finsler manifold.
\end{example}

\begin{lemma}\label{lem2.13}
Let $N$ be  a closed subset of a connected Finsler manifold $M.$ 
Then for each point $q\notin N,$  there exists a $d_N$-geodesic to $q.$
In particular, for each point $q,$  there exists a $d_p$-geodesic to $p,$
if the point $p$ is distinct from $q.$
\end{lemma}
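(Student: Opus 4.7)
The plan is to exhibit the desired $d_N$-geodesic as a short unit speed minimal geodesic segment ending at $q$, whose starting point is chosen on a tiny backward sphere around $q$ so as to minimize the value of $d_N$. First, I would fix a precompact normal neighborhood $V$ of $q$ and a radius $r_0\in(0,d_N(q))$ small enough that the closed backward ball $\bar B^-(q,r_0):=\{x\in M : d(x,q)\le r_0\}$ is compact, is contained in $V$, and has the property that every point of $V$ is joined to $q$ by a unique unit speed minimal geodesic lying in $V$. Because $r_0<d_N(q)$, this ball is automatically disjoint from $N$, and the 1-Lipschitz property of $d_N$ already gives $d_N(x)\ge d_N(q)-r_0$ for all $x$ in the ball.

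Next, I would produce a candidate starting point $x_0$ with $d_N(x_0)=d_N(q)-r_0$ and $d(x_0,q)=r_0$. Choose a sequence $p_n\in N$ with $d(p_n,q)\to d_N(q)$ and, by the definition of the Finslerian distance, curves $\gamma_n$ from $p_n$ to $q$ of length $L_n<d(p_n,q)+1/n$ parametrized by arc length. For $n$ large, set $x_n:=\gamma_n(L_n-r_0)$; then $d(x_n,q)\le r_0$ and $d(p_n,x_n)\le L_n-r_0$, so $d_N(x_n)\le L_n-r_0\to d_N(q)-r_0$. Compactness of $\bar B^-(q,r_0)$ yields a subsequential limit $x_0$, and the 1-Lipschitz lower bound combined with continuity forces $d_N(x_0)=d_N(q)-r_0$. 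Applying the 1-Lipschitz inequality $d_N(q)-d_N(x_0)\le d(x_0,q)$ then forces $d(x_0,q)=r_0$ exactly.

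Let $\sigma:[0,r_0]\to M$ be the unique unit speed minimal geodesic from $x_0$ to $q$ provided by the normal neighborhood. For any $t\in[0,r_0]$, the 1-Lipschitz property gives the upper bound $d_N(\sigma(t))\le d_N(x_0)+t$, while the complementary bound $d_N(q)-d_N(\sigma(t))\le d(\sigma(t),q)=r_0-t$ rearranges to $d_N(\sigma(t))\ge d_N(x_0)+t$. Hence $d_N(\sigma(t))=d_N(x_0)+t$ for every $t\in[0,r_0]$, and therefore $d_N(\sigma(t))-d_N(\sigma(s))=t-s$ for all $s,t$, so $\sigma$ is a $d_N$-geodesic terminating at $q$. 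The final assertion is then the special case $N=\{p\}$.

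The main obstacle is the first paragraph: since $(M,F)$ is not assumed to be forward or backward complete, I must justify the compactness of small backward balls around $q$ together with the uniqueness and existence of short minimal geodesics to $q$. This is handled by invoking standard local Finsler theory (cf.\ \cite{BCS}), which provides a totally normal neighborhood of any point on which the reverse exponential map is a diffeomorphism onto its image; shrinking $r_0$ sufficiently embeds $\bar B^-(q,r_0)$ inside such a neighborhood. Once this local framework is in place, the compactness argument, the selection of $x_0$, and the 1-Lipschitz bookkeeping that upgrades $\sigma$ to a $d_N$-geodesic are all routine.
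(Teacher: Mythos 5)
Your proof is correct and takes essentially the paper's approach: localize to a small compact backward sphere around $q$, produce a point $x_0$ on it satisfying $d_N(q)=d_N(x_0)+d(x_0,q)$, and then use the 1-Lipschitz property of $d_N$ to show the unique short minimal geodesic from $x_0$ to $q$ is a $d_N$-geodesic. The only variation is in how $x_0$ is obtained: the paper picks $x_0$ as a minimizer of $d_N$ over the compact backward sphere $\{x: d(x,q)=\delta\}$ and then verifies $d_N(q)\ge d_N(x_0)+d(x_0,q)$, whereas you construct $x_0$ directly as a subsequential limit of points cut at arclength $r_0$ from $q$ along almost-minimizing curves from $N$ to $q$, which produces the same equality as a byproduct of the construction.
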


\begin{proof}
Choose any point $q\notin N$ and a small positive number $\delta,$  so that   
${ \cal S}:=\{ x\in M| \; d(x,q)=\delta \}$ is compact, and each point of $\cal S$ is connected to the point $q$ by the unique minimal geodesic segment.
Since  $\cal S$ is compact and the function $d_N$ is continuous, there exist a point $x_0\in\cal S$ with $d_N(x_0)=\min\{ d_N( x)|\: x\in {\cal S}\}.$
Then it is easy to check that
\begin{equation}\label{eqlem2.12} 
d_N(q)\ge d_N(x_0)+d(x_0,q).
\end{equation} 
If $\alpha: [0,\delta]\to M$ denotes the unit speed minimal geodesic segment emanating from $x_0$ to $q,$ the equation \eqref{eqlem2.12} implies that
$$d_N(\alpha(\delta))\ge d_N(\alpha(0))+\delta,$$
and hence
$$d_N(\alpha(\delta))= d_N(\alpha(0))+\delta,$$
since $d_N$ is 1-Lipschitz.
From Lemma \ref{lem: Lipschitz curve is f-geod} it follows that $\alpha:[0,\delta]\to M$ is a $d_N$-geodesic to  the point $q.$

$\qedd$\end{proof}

%
\section {First variation formulas for  almost distance functions }

We will give the proof of {\bf Theorem A}   and some interesting examples
of an almost distance function in this section.
Let $f$ be a 1-Lipschitz function on a connected  Finsler manifold $(M,F).$

It is sometimes convenient to make use of  the following parametrization of $f$-geodesics. 

\begin{definition}
The parameter 
of an $f$-geodesic $\alpha:[a,b]\to M$ 
is called {\it canonical}
if $f(\alpha(t))=t$  holds on $[a,b].$ 
\end{definition}

By definition, it is clear that an $f$-geodesic is preserved by any parallel translation of the parameter of the $f$-geodesic. Hence, we can introduce 
the canonical parameter for any $f$-geodesic.

\begin{lemma}\label{primitive first variation formula, ver.1}
Let $\{ \gamma_i \}_i $ be a sequence of $f$-geodesics defined on a common interval $[0,a]$. If the following three limits

\begin {equation}\label{eq3.1}
p:=\lim_{i\to \infty}  \gamma_i(0)
\end{equation}
\begin{equation}\label{eq3.2}
w_\infty:=\lim_{i\to\infty}\dot\gamma_i(0)
\end{equation}
\begin{equation}\label{eq3.3}
v^f:=\lim_{i\to\infty}\frac{1}{F(\exp^{-1}_p(\gamma_i(0)) )}
\exp^{-1}_p(\gamma_i(0) )
\end{equation}
exist, then 

\begin{equation}\label{eq3.4}
g_{w_{\infty} }(w_{\infty} ,v^f)\ge  g_{\dot\gamma(f(p) )}(\dot\gamma(f(p)),v^f)
\end{equation}
holds for any $f$-geodesic $\gamma$ emanating from $p$ with canonical parameter.

Moreover, we have
\begin{equation}\label{eq3.5}
\lim_{i\to \infty}\frac{f(\gamma_i(0))-f(p)}{d(p,\gamma_i(0))}=g_{w_\infty}(w_\infty,v^f).
\end{equation}
Here $\exp^{-1}_p$ denotes the local inverse map of the exponential map $\exp_p$ around the zero vector.
\end{lemma}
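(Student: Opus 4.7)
\textit{Proof plan.} The plan is to prove the identity \eqref{eq3.5} by squeezing the ratio $[f(\gamma_i(0))-f(p)]/d(p,\gamma_i(0))$ between matching upper and lower bounds obtained from the first variation formula \eqref{first variation} and the 1-Lipschitz property of $f$, and then to deduce the inequality \eqref{eq3.4} by keeping $\gamma$ arbitrary in the lower bound. As a preliminary observation, the pointwise limit $\gamma_\infty(t):=\exp_p(tw_\infty)$ is itself an $f$-geodesic emanating from $p$: the defining identity $f(\gamma_i(t))-f(\gamma_i(s))=t-s$ passes to the limit thanks to continuity of $f$, making $\gamma_\infty$ a legitimate choice of $\gamma$ in \eqref{eq3.4}.

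Let $\sigma_i:[0,\ell_i]\to M$ be the unit speed minimal geodesic from $p$ to $\gamma_i(0)$ for large $i$, with $\ell_i:=F(\exp_p^{-1}(\gamma_i(0)))=d(p,\gamma_i(0))$; by hypothesis $\dot\sigma_i(0)\to v^f$, and by smoothness of the geodesic flow on $\widetilde{TM}$ the endpoint velocity $\dot\sigma_i(\ell_i)$ also converges to $v^f$ when compared in a local trivialization of $TM$ near $p$. For the upper bound on \eqref{eq3.5}, I would construct a variation of $\gamma_i|_{[0,a]}$ that slides its starting point from $\gamma_i(0)$ back to $p$ along the reverse of $\sigma_i$ while fixing its endpoint $\gamma_i(a)$. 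Applying \eqref{first variation} with $U(0)=-\dot\sigma_i(\ell_i)$ and $U(a)=0$, and comparing the resulting length to the Finslerian distance, gives
\begin{equation*}
d(p,\gamma_i(a))\le a+\ell_i\,g_{\dot\gamma_i(0)}(\dot\gamma_i(0),\dot\sigma_i(\ell_i))+o(\ell_i).
\end{equation*}
Combining this with 1-Lipschitz, $f(\gamma_i(a))-f(p)\le d(p,\gamma_i(a))$, and the $f$-geodesic identity $f(\gamma_i(a))=f(\gamma_i(0))+a$, and dividing by $\ell_i$, yields $\limsup_i[f(\gamma_i(0))-f(p)]/\ell_i\le g_{w_\infty}(w_\infty,v^f)$.

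For the lower bound, fix any $f$-geodesic $\gamma$ from $p$ with canonical parameter, pick $b>0$ small enough that $\gamma$ is defined on $[f(p),f(p)+b]$, and set $q:=\gamma(f(p)+b)$; then $d(p,q)=b$ because $\gamma$ is minimal by Lemma \ref{lem: Lipschitz curve is f-geod}. Applying \eqref{first variation} to a variation of $\gamma|_{[f(p),f(p)+b]}$ that slides its starting point from $p$ to $\gamma_i(0)$ along $\sigma_i$ while fixing $q$ (so $U(0)=\dot\sigma_i(0)$, $U(b)=0$) gives
\begin{equation*}
d(\gamma_i(0),q)\le b-\ell_i\,g_{\dot\gamma(f(p))}(\dot\gamma(f(p)),\dot\sigma_i(0))+o(\ell_i),
\end{equation*}
which, together with 1-Lipschitz and $f(q)=f(p)+b$, produces $\liminf_i[f(\gamma_i(0))-f(p)]/\ell_i\ge g_{\dot\gamma(f(p))}(\dot\gamma(f(p)),v^f)$. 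Specializing $\gamma=\gamma_\infty$ matches the upper bound and proves \eqref{eq3.5}; retaining $\gamma$ arbitrary then yields \eqref{eq3.4}.

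The main technical obstacle will be ensuring that the $o(\ell_i)$ remainder terms produced by the first variation formula are uniform in $i$, since the formula is ordinarily stated along a single smooth variation whereas here it is being invoked along a sequence of variations whose base geodesics themselves change with $i$. I would handle this by packaging all the required variations into a single smooth map---using the exponential map applied to a smoothly varying vector field along $\sigma_i$---and then invoking the $C^1$-convergence of $\gamma_i$ to $\gamma_\infty$ together with the smooth collapse of $\sigma_i$ onto $p$, both of which are controlled by the smoothness of $\exp$ on $\widetilde{TM}$.
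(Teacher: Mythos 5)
Your plan follows essentially the same route as the paper: squeeze the quotient $[f(\gamma_i(0))-f(p)]/d(p,\gamma_i(0))$ between a $\limsup$ bound (from the 1-Lipschitz property of $f$ across a nearby point on $\gamma_i$) and a $\liminf$ bound (from the 1-Lipschitz property across a nearby point on an arbitrary $f$-geodesic $\gamma$ emanating from $p$), invoke the first variation formula \eqref{first variation} in both, and then close by specializing $\gamma=\gamma_\infty$, which you correctly observe is itself an $f$-geodesic. The one genuine technical distinction is how the first variation enters. You apply it to a variation of the $f$-geodesic with a moving endpoint and then Taylor-expand, which is exactly what creates the $o(\ell_i)$-uniformity concern you flag at the end. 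The paper instead fixes a nearby point $\gamma_i(\delta)$ (with $\delta$ chosen so that $\gamma_\infty(\delta)$ lies in a strongly convex ball about $p$), sets $h(t):=d(\sigma_i(t),\gamma_i(\delta))$, and writes the relevant difference of distances \emph{exactly} as $d(p,\gamma_i(\delta))-d(\gamma_i(0),\gamma_i(\delta))=\int_0^{d_i}g_{w_i(t)}(w_i(t),\dot\sigma_i(t))\,dt$, the fundamental theorem of calculus combined with the first variation computing $h'(t)$; taking the limit then requires only uniform convergence of the integrands $g_{w_i(t)}(w_i(t),\dot\sigma_i(t))$ on shrinking intervals, which is immediate from smoothness of the geodesic flow inside the convex ball. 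This sidesteps the remainder-term uniformity issue entirely and also guarantees that the minimal geodesics defining $w_i(t)$ are unique. Your version is salvageable along the lines you sketch, but the exact-integral form is the cleaner way to make the argument rigorous, and you should in any case replace $\gamma_i(a)$ by $\gamma_i(\delta)$ for a small fixed $\delta$ to keep the whole construction inside a single strongly convex neighborhood.
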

\begin{proof}
We may assume that each $\gamma_i(0)$ is a point in a convex ball centered at $p.$
For each $\gamma_i(0),$
let $\sigma_i :[0,d(p,\gamma_i(0) )]\to M$ denote the unit speed minimal geodesic
segment joining $p$ to $\gamma_i(0),$ and hence
 
\begin{equation}\label{eq3.6}
\dot\sigma_i(0)=\frac{1}{F(\exp^{-1}_p(\gamma_i(0)) )}\exp^{-1}_p(\gamma_i(0)).
\end{equation}
Let us choose a constant $\delta\in(0,a)$ in such a way that $\gamma_\infty(\delta)$ is a point of   a strongly convex ball around $p.$ 
Here $\gamma_\infty(t):=\exp(tw_\infty)$ denotes the limit geodesic of the sequence $\{\gamma_i\}.$
Since $\gamma_i$ is an $f$-geodesic and $f$ is 1-Lipschitz,
we obtain,
$f(\gamma_i(0))-f(\gamma_i(\delta)  )=-d(\gamma_i(0),\gamma_i(\delta)),$
and
$f(\gamma_i(\delta))-f(p)\le d(p,\gamma_i(\delta)), $ respectively.
Hence,
\begin{equation}\label{eq3.7}
f(\gamma_i(0))-f(p)\le d(p,\gamma_i(\delta))-d(\gamma_i(0),\gamma_i(\delta)).
\end{equation}
It follows from  the first variation formula   \eqref{first variation} that
\begin{equation}\label{eq3.8}
d(p,\gamma_i(\delta))-d(\gamma_i(0),\gamma_i(\delta))=-\int_0^{d_i} h'(t)dt=\int_0^{d_i} g_{w_i(t)}(w_i(t),\dot\sigma_i(t))dt
\end{equation}
holds, where 
$d_i:=d(p,\gamma_i(0)),$ $h(t):=d(\sigma_i(t),\gamma_i(\delta)),$
and $w_i(t)$ denotes the unit velocity vector at $\sigma_i(t)$ of the minimal geodesic segment joining $\sigma_i(t)$ to $\gamma_i(\delta).$
It is clear that for each small interval $[0,c],$ the two sequences $\{w_i(t)\}$ and $\{\dot\sigma_i(t)\}$ of vector valued functions uniformly converge to $w_\infty(t)$ and $\dot\sigma_\infty(t)$ on $[0,c]$ respectively, where
$\sigma_\infty(t):=\exp(tv^f),$ and $w_\infty(t)$ denotes
the unit velocity vector at $\sigma_\infty(t)$ of the minimal geodesic segment joining  $ \sigma_\infty(t)$ to $\gamma_\infty(\delta).$
Therefore, by \eqref{eq3.8},
we obtain,
\begin{equation}\label{eq3.9}
\lim_{i\to \infty}\frac{d(p,\gamma_i(\delta))-d(\gamma_i(0),\gamma_i(\delta))}{d_i}=g_{w_\infty}(w_\infty,v^f).
\end{equation}
Combining \eqref{eq3.7} and \eqref{eq3.9}, we get
\begin{equation}\label{eq3.10}
\limsup_{i\to \infty}\frac{f(\gamma_i(0))-f(p)}{d_i}\le g_{w_\infty}(w_\infty,v^f).
\end{equation}

Let $\beta$ denote any $f$-geodesic emanating from $p$ with canonical parameter.
Since $f$ is 1-Lipschitz, and $\beta$ is an $f$-geodesic,
we have
$f(\gamma_i(0))-f(\beta(f(p)+\delta))\ge -d(\gamma_i(0),\beta(f(p)+\delta)),$
and $f(\beta(f(p)+\delta))-f(p)=d(p,\beta(f(p)+\delta)),$ respectively, 
where $\delta>0$ is a sufficiently small positive number, so that $\beta(f(p)+\delta)$ is a point in a strongly convex ball centered at $p.$
Hence, we obtain,
$f(\gamma_i(0))-f(p)\ge d(p,\beta(f(p)+\delta))-d(\gamma_i(0),\beta(f(p)+\delta)).$
By making use of the same technique as above,
we get
\begin{equation}\label{eq3.11}
\liminf_{i\to \infty}\frac{f(\gamma_i(0) )-f(p)}{d_i}\ge g_{\dot\beta(f(p))}(\dot\beta(f((p)),v^f)
\end{equation}
for any $f$-geodesic $\beta$ emanating from $p.$
In particular, by choosing the limit $f$-geodesic $\beta=\gamma_\infty,$ we get
\eqref{eq3.5}, and \eqref{eq3.4} follows from \eqref{eq3.10} and \eqref{eq3.11}.
$\qedd$
\end{proof}


\begin{lemma}\label{primitive first variation formula, ver.2}
Let $\{ \gamma_i \}_i $ be a sequence of $f$-geodesics defined on a common interval $[b,0]$. If the following three limits

\begin {equation}\label{eq3.12}
p:=\lim_{i\to \infty}  \gamma_i(0)
\end{equation}
\begin{equation}\label{eq3.13}
w_\infty:=\lim_{i\to\infty}\dot\gamma_i(0)
\end{equation}
\begin{equation}\label{eq3.14}
v^f:=\lim_{i\to\infty}\frac{1}{F(\exp^{-1}_p(\gamma_i(0)) )}
\exp^{-1}_p(\gamma_i(0) )
\end{equation}
exist, then 

\begin{equation}\label{eq3.15}
g_{w_{\infty} }(w_{\infty} ,v^f)\le  g_{\dot\gamma(f(p) )}(\dot\gamma(f(p)),v^f)
\end{equation}
holds for any $f$-geodesic $\gamma$ to $p$ with canonical parameter.
Moreover, we have
\begin{equation}\label{eq3.16}
\lim_{i\to \infty}\frac{f(\gamma_i(0))-f(p)}{d(p,\gamma_i(0))}=g_{w_\infty}(w_\infty,v^f).
\end{equation}
\end{lemma}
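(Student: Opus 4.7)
The statement is the backward analogue of Lemma \ref{primitive first variation formula, ver.1}: the $f$-geodesics $\gamma_i$ now arrive at $\gamma_i(0)\to p$ rather than emanate from it, and the inequality in \eqref{eq3.15} is reversed with respect to \eqref{eq3.4}. My plan is to transpose the proof of the previous lemma with the roles of the two pivot points interchanged: where that argument used the forward pivot $\gamma_i(\delta)$ and an auxiliary $f$-geodesic $\beta$ \emph{emanating from} $p$, here I will use the backward pivot $\gamma_i(-\delta)$ and an auxiliary $f$-geodesic $\beta$ \emph{arriving at} $p$.

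As before, let $\sigma_i:[0,d_i]\to M$ be the unit speed minimal geodesic from $p$ to $\gamma_i(0)$, with $d_i:=d(p,\gamma_i(0))$, so that $\dot\sigma_i(0)\to v^f$. The limit curve $\gamma_\infty(t):=\exp(tw_\infty)$, defined on $[b,0]$, is itself an $f$-geodesic arriving at $p$, since the identity $f(\gamma_i(t))-f(\gamma_i(s))=t-s$ passes to the uniform limit.

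For the lower bound, fix $\delta>0$ small enough that $\gamma_\infty(-\delta)$ lies in a strongly convex ball about $p$. The $f$-geodesic property combined with Lemma \ref{lem: Lipschitz curve is f-geod} gives $f(\gamma_i(0))-f(\gamma_i(-\delta))=\delta=d(\gamma_i(-\delta),\gamma_i(0))$, while the 1-Lipschitz estimate yields $f(p)-f(\gamma_i(-\delta))\le d(\gamma_i(-\delta),p)$; subtracting produces
\begin{equation*}
f(\gamma_i(0))-f(p)\;\ge\;d(\gamma_i(-\delta),\gamma_i(0))-d(\gamma_i(-\delta),p).
\end{equation*}
The right-hand side equals $h_i(d_i)-h_i(0)$ for $h_i(t):=d(\gamma_i(-\delta),\sigma_i(t))$, and the first variation formula \eqref{first variation} applied exactly as in Lemma \ref{primitive first variation formula, ver.1}, using that the unit velocity at $\sigma_i(t)$ of the minimal segment arriving from $\gamma_i(-\delta)$ converges uniformly to $\dot\gamma_\infty(0)=w_\infty$, yields $\liminf_{i\to\infty}(f(\gamma_i(0))-f(p))/d_i\ge g_{w_\infty}(w_\infty,v^f)$.

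For the upper bound, apply the 1-Lipschitz inequality to the pair $(\beta(f(p)-\delta),\gamma_i(0))$ with $\beta$ an arbitrary $f$-geodesic to $p$ in canonical parameter; since $f(\beta(f(p)-\delta))=f(p)-\delta=f(p)-d(\beta(f(p)-\delta),p)$, it follows that
\begin{equation*}
f(\gamma_i(0))-f(p)\;\le\;d(\beta(f(p)-\delta),\gamma_i(0))-d(\beta(f(p)-\delta),p),
\end{equation*}
and the parallel first-variation computation (with pivot $\beta(f(p)-\delta)$, whose minimal segment to $p$ terminates with velocity $\dot\beta(f(p))$) gives $\limsup_{i\to\infty}(f(\gamma_i(0))-f(p))/d_i\le g_{\dot\beta(f(p))}(\dot\beta(f(p)),v^f)$. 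Combining the two bounds delivers \eqref{eq3.15} for every such $\beta$; specializing to $\beta=\gamma_\infty$ collapses the sandwich and proves \eqref{eq3.16}. The only genuinely new point beyond a mechanical mirroring of ver.1 is bookkeeping the direction of the \emph{arriving} velocity at $p$; both identifications (namely $w_\infty=\dot\gamma_\infty(0)$ and $\dot\beta(f(p))$) follow because $\gamma_\infty|_{[-\delta,0]}$ and $\beta|_{[f(p)-\delta,f(p)]}$ are themselves the relevant unit-speed minimal segments.
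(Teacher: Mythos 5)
Your proposal is correct and follows essentially the same route as the paper: choose $\sigma_i$ from $p$ to $\gamma_i(0)$, pivot on $\gamma_i(-\delta)$ for the lower bound, pivot on $\beta(f(p)-\delta)$ for the upper bound, pass to the limit in the first variation formula, and specialize $\beta=\gamma_\infty$ to collapse the sandwich. The only cosmetic difference is that you explicitly note the identifications $w_\infty=\dot\gamma_\infty(0)$ and that $\gamma_\infty|_{[-\delta,0]}$, $\beta|_{[f(p)-\delta,f(p)]}$ are the relevant minimal segments inside the strongly convex ball, which the paper leaves implicit.
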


\begin{proof}
We may assume that each $\gamma_i(0)$ is a point in a convex ball centered at $p.$
For each $\gamma_i(0),$
let $\sigma_i :[0,d(p,\gamma_i(0) )]\to M$ denote the unit speed minimal geodesic
segment joining $p$ to $\gamma_i(0),$ and hence
\begin{equation}\label{eq3.17}
\dot\sigma_i(0)=\frac{1}{F(\exp^{-1}_p(\gamma_i(0)) )}\exp^{-1}_p(\gamma_i(0)).
\end{equation}
Let us choose a constant $\delta\in(0,|b|)$ in such a way that $\gamma_\infty(-\delta)$ is a point of   a strongly convex ball around $p.$ 
Here $\gamma_\infty(t):=\exp(tw_\infty)$ denotes the limit geodesic of the sequence $\{\gamma_i\}.$
Since $\gamma_i$ is an $f$-geodesic and $f$ is 1-Lipschitz,
we obtain,
$f(\gamma_i(0))-f(\gamma_i(-\delta)  )=d(\gamma_i(-\delta),\gamma_i(0)),$
and
$f(\gamma_i(-\delta))-f(p)\ge -d(\gamma_i(-\delta),p).$
Hence,

\begin{equation}\label{eq3.18}
f(\gamma_i(0))-f(p)\ge d(\gamma_i(-\delta), \gamma_i(0))-d(\gamma_i(-\delta),p).
\end{equation}
It follows from  the first variation formula   \eqref{first variation} that
\begin{equation}\label{eq3.19}
d(\gamma_i(-\delta),\gamma_i(0))-d(\gamma_i(-\delta),p)=\int_0^{d_i} h'(t)dt=\int_0^{d_i} g_{w_i(t)}(w_i(t),\dot\sigma_i(t))dt
\end{equation}
holds, where 
$d_i:=d(p,\gamma_i(0)),$ $h(t):=d(\gamma_i(-\delta), \sigma_i(t)),$
and $w_i(t)$ denotes the unit velocity vector at $\sigma_i(t)$ of the minimal geodesic segment joining $\gamma_i(-\delta)$ to $\sigma_i(t).$
It is clear that for each small interval $[0,c],$ the two sequences $\{w_i(t)\}$ and $\{\dot\sigma_i(t)\}$  of vector valued functions uniformly converge to $w_\infty(t)$ and $\dot\sigma_\infty(t)$ on $[0,c]$ respectively, where
$\sigma_\infty(t):=\exp(tv^f)$ and $w_\infty(t)$ denotes
the unit velocity vector at $\sigma_\infty(t)$ of the minimal geodesic segment joining  $ \sigma_\infty(t)$ to $\gamma_\infty(-\delta).$
Therefore, by \eqref{eq3.19},
we obtain,
\begin{equation}\label{eq3.20}
\lim_{i\to \infty}\frac{d(\gamma_i(-\delta),\gamma_i(0))-d(\gamma_i(-\delta),p))}{d_i}=g_{w_\infty}(w_\infty,v^f).
\end{equation}
Combining \eqref{eq3.18} and \eqref{eq3.20}, we get
\begin{equation}\label{eq3.21}
\liminf_{i\to \infty}\frac{f(\gamma_i(0))-f(p)}{d_i}\ge g_{w_\infty}(w_\infty,v^f).
\end{equation}

Let $\beta$ denote any $f$-geodesic to $p$ with canonical parameter.
Since $f$ is 1-Lipschitz, and $\beta$ is an $f$-geodesic,
we have
$f(\gamma_i(0))-f(\beta(f(p)-\delta))\le d(\beta(f(p)-\delta), \gamma_i(0)),$
and 
$f(\beta(f(p)-\delta))-f(p)=-d(\beta(f(p)-\delta),p),$
where $\delta>0$ is a sufficiently small number, so that $\beta(f(p)-\delta)$ is a point in a strongly convex ball at $p.$
Hence, we obtain,
$f(\gamma_i(0))-f(p)\le d(\beta(f(p)-\delta), \gamma_i(0) )-d(\beta(f(p)-\delta), p).$
By making use of the same technique as above,
we get
\begin{equation}\label{eq3.22}
\limsup_{i\to \infty}\frac{f(\gamma_i(0) )-f(p)}{d_i}\le g_{\dot\beta(f(p))}(\dot\beta(f((p))  ),v^f)
\end{equation}
for any $f$-geodesic $\beta$ to $p.$
In particular, by choosing the limit $f$-geodesic $\beta=\gamma_\infty,$ we get
\eqref{eq3.16}, and \eqref{eq3.15} follows from \eqref{eq3.21} and \eqref{eq3.22}.
$\qedd$
\end{proof}

\begin{definition}
A 1-Lipschitz function $f$  on a connected Finsler manifold $M$ called an {\it almost distance function} if for each point $p\in f^{-1}(\inf f,\sup f),$ there exist a neighborhood $U_p$ of $p$ and  a positive constant
$\delta(p)$ such that for each point $q\in U_p,$ there exists an $f$-geodesic $\gamma_q:[0,\delta(p)]\to M$  with $q=\gamma_q(0)$ or $q=\gamma_q(\delta(p)).$ 
\end{definition}

\begin{remark}
Observe that this definition guarantees that maximal $f$-geodesics do not shrink to a point. 
It is possible to define the almost distance function on  a metric space such as
an Alexandrov space or a Busemann E-space. Note that a Finsler manifold is a Busemann E-space (see \cite{Bu1}).
For example, {\bf Theorem B} in the introduction would be true for an almost distance function on an Alexandrov surface. We, however, restrict ourselves to the almost distance function on a differentiable manifold in this paper.

Obviously, the distance function from a closed subset $N$ and the Busemann function on a Finsler manifold are typical examples of almost distance functions (see Examples \ref{Ed_p almost dist}, \ref{EBu almost dist} and \ref{EWu almost dist}
below).
\end{remark}

\begin{example}\label{Ed_p almost dist}
Let $N$ denote a closed subset of a connected Finsler manifold $M.$
From the proof of Lemma \ref{lem2.13}  it is easy to check that
for each $p\notin d_N^{-1}(\inf d_N)=d_N^{-1}(0),$  
any point $q\in B_{r(p)}(p)$ admits a $d_N$-geodesic 
$\gamma_q :[0,r(p)]\to M$ with 
$\gamma_q(r(p))=q.$ 
Here $r(p):=d_N(p)/3,$ and $B_{r(p)}(p)$ denotes the open ball centered at $p$ with radius $r(p).$
Thus, $d_N$ is an almost distance function on $M.$
\end{example}

\begin{example}\label{EBu almost dist}
Let $\gamma:[0,\infty)\to M$ denote a ray  on a forward complete Finsler manifold $M.$ Then, a ray $\sigma:[0,\infty)\to M$ on $M$ is called a {\it coray}  of (or {\it asymptotic} ray to) the ray $\gamma$  if $\sigma$ has a sequence $\{q_j\}$ of points of $M$ and a sequence $\{t_i\}$  of positive numbers with $\lim_{i\to \infty} { t_i}=\infty$
such that $\sigma(0)=\lim_{i\to \infty}q_i$ and $ \dot\sigma(0)=\lim_{i\to\infty}\dot\sigma_i(0),$ where
$\sigma_i$ denotes a minimal geodesic segment joining $q_i=\sigma_i(0)$ to $\gamma(t_i).$ 
Note that Busemann \cite{Bu2} introduced the notion of asymptotic rays on a Busemann E-space, and he proved in 
\cite[Theorem 11.2] {Bu1}
that  a ray $\sigma :[0,\infty)\to M$  is a coray of a ray  $\gamma$ if and only if the ray $\sigma$ is   a $B_\gamma$-geodesic on a Busemann E-space  $M.$ 
Note that a Finsler manifold is a Busemann E-space.
It is checked again  in \cite{Sa}  that each coray is an $B_\gamma$-geodesic, where 
$B_\gamma$ denotes the Busemann function of $\gamma$ defined in Example \ref{EBu}.
Remark that  this property is also checked  in \cite{Oh} under a little stronger definition of the coray on a Finsler manifold.
Therefore, any Busemann function on  a forward complete Finsler manifold  is an almost distance function.
\end{example}

\begin{example}\label{EWu almost dist}
Let us check that  $\eta$ is an almost distance function, where 
 $\eta$  is the function  introduced in Example \ref{EWu}.
It is proved in \cite[Lemma 6]{Wu} that for each point $x\in M$ there exists a ray
$\gamma$  emanating from $x$ satisfying $\eta(\gamma(t))=t+\eta(x).$ This implies that each point admits an $\eta$-geodesic
which is a ray, hence $\eta$ is an almost distance function.
The horofunction $f$  introduced in Example \ref{EgBu} is also an almost distance function.  In fact, for each point $p$ on a complete Riemannian  manifold,  there exists a  ray emanating from $p$ which is an $f$-geodesic. Remark that the $f$-geodesic is  a limit ray of the sequence of the minimal geodesic segments from $p$ to the point $x_n.$

\end{example}

The definition of an almost distance function would lead to the following  question:\\ 
{\it Under what conditions an almost distance function becomes the  distance function from a closed set?}\\
In Proposition \ref{sufficient condition for an almost distance function to a distance function}, we will see a sufficient condition for an almost distance function to be  the distance function from a closed subset.
\begin{example}
An almost distance function on a forward complete and connected Finsler manifold $M$ 
is called a {\it generalized Busemann} function if each point of $M$ admits an $f$-geodesic $\gamma :[0,\infty)\to M$ emanating from $p.$
Remark that  $\gamma$ is a ray by Lemma \ref{lem: Lipschitz curve is f-geod}.
Choose  any two generalized Busemann functions $f_1$ and $f_2$ on a forward complete and connected Finsler manifold $M,$ and put $f:=\max(f_1,f_2).$
From Example \ref{Emax,min}, the function $f$  is 1-Lipschitz.
We will prove that $f$ is a generalized Busemann function.
Let $p$ be any point of $M.$
We may assume that $f_1(p)\geq f_2(p).$
Let $\gamma_1 :[0,\infty)\to M$ denote the maximal  $f_1$-geodesic issuing from $p.$
Hence $f_1(\gamma_1(t))=t+f_1(p)$  holds on $[0,\infty).$
Since $f_2$ is 1-Lipschitz, $f_2(\gamma_1(t))-f_2(p)\leq t$ on $[0,\infty).$
Therefore,
$f_2(\gamma_1(t))\leq t+f_1(p)=f_1(\gamma_1(t))$ on  $[0,\infty).$
This implies
that $f(\gamma_1(t))=f_1(\gamma_1(t))=t+f(p)$ on $[0,\infty),$
 and $\gamma_1$ is an $f$-geodesic issuing from $p.$
In particular, the function $f$ is an almost distance function on $M.$

\end{example}

\begin{proposition}\label{generalized first variation formula for almost distance functions}
Let $f$ be an almost distance function on a connected Finsler manifold $(M,F).$
Let $\alpha$ be  a unit speed geodesic emanating from a point
$p=\alpha(0)$  in $f^{-1}(\inf f,\sup f).$ 
If  the point $p$ admits a    unique maximal $f$-geodesic $\gamma$ with
canonical parameter, then

\begin{equation}\label{eqN.23}
\lim_{t\searrow 0}\frac{f\circ\alpha(t)-f\circ\alpha(0)}{t}=g_{\dot\gamma(f(p))}(\dot\gamma(f(p)),\dot\alpha(0))
\end{equation} 
holds.
\end{proposition}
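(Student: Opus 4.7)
My plan is to reduce to Lemmas \ref{primitive first variation formula, ver.1} and \ref{primitive first variation formula, ver.2} by constructing, for any sequence $t_i\searrow 0$, a sequence of $f$-geodesics passing through $\alpha(t_i)$, and then using the uniqueness hypothesis at $p$ to identify the limit.

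Since $\alpha(t)\to p\in f^{-1}(\inf f,\sup f)$, I may assume $\alpha(t_i)\in U_p$ for all $i$; the definition of an almost distance function then yields, for each $i$, an $f$-geodesic $\gamma_{t_i}:[0,\delta(p)]\to M$ with either $\gamma_{t_i}(0)=\alpha(t_i)$ (Case A) or $\gamma_{t_i}(\delta(p))=\alpha(t_i)$ (Case B). Passing to a subsequence, I will arrange that all $\gamma_{t_i}$ are of the same type and that the relevant initial velocity converges to a unit vector $w_\infty\in T_pM$: compactness of the indicatrix $S_pM$ and continuous dependence of geodesics on initial data make this possible, and the resulting limit geodesic $\gamma_\infty$ through $p$ inherits the $f$-geodesic identity by passing to the uniform limit.

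In Case A, $\gamma_\infty$ extends to a maximal $f$-geodesic through $p$, which by the uniqueness hypothesis must coincide with $\gamma$; under the canonical parametrization this forces $w_\infty=\dot\gamma(f(p))$. In Case B, the reparametrization $\tilde\gamma_{t_i}(s):=\gamma_{t_i}(s+\delta(p))$ on $[-\delta(p),0]$ satisfies $\tilde\gamma_{t_i}(0)=\alpha(t_i)$, and the same argument yields $\lim \dot{\tilde\gamma}_{t_i}(0)=\dot\gamma(f(p))$. Because $\alpha$ is a unit speed geodesic from $p$, for small $t_i$ one has $d(p,\alpha(t_i))=t_i$ and $\exp_p^{-1}(\alpha(t_i))/F(\exp_p^{-1}(\alpha(t_i)))=\dot\alpha(0)$, so with $v^f:=\dot\alpha(0)$ the relevant conclusion of Lemma \ref{primitive first variation formula, ver.1} or Lemma \ref{primitive first variation formula, ver.2} gives
\begin{equation*}
\lim_{i\to\infty}\frac{f(\alpha(t_i))-f(p)}{t_i}=g_{w_\infty}(w_\infty,v^f)=g_{\dot\gamma(f(p))}(\dot\gamma(f(p)),\dot\alpha(0)).
\end{equation*}
Since every sequence $t_i\searrow 0$ admits such a subsequence with this common value, the one-sided limit in \eqref{eqN.23} exists and equals $g_{\dot\gamma(f(p))}(\dot\gamma(f(p)),\dot\alpha(0))$.

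The main obstacle is the identification $w_\infty=\dot\gamma(f(p))$: it rests on the closure property that a uniform limit of unit speed $f$-geodesics is itself an $f$-geodesic (obtained by passing the defining identity to the limit) combined with the uniqueness of the maximal $f$-geodesic through $p$. Once this identification is secured, the rest is a direct invocation of the two primitive first variation formulas already established in this section.
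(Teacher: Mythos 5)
Your proof is correct and follows essentially the same route as the paper: invoke the almost-distance definition to produce $f$-geodesics $\gamma_{t}$ through $\alpha(t)$, use the uniqueness of the maximal $f$-geodesic through $p$ to identify the limit velocity $w_\infty$ with $\dot\gamma(f(p))$, and then apply Lemmas \ref{primitive first variation formula, ver.1} and \ref{primitive first variation formula, ver.2} with $v^f=\dot\alpha(0)$. You are merely more explicit than the paper about the subsequence/compactness step that secures the existence of $w_\infty$ and about the standard ``every subsequence has the same limit'' reduction, which are left implicit in the paper's terse proof.
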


\begin{proof}
Since $f$ is an almost distance function, there exist a neighborhood  $U_p$ of $p$ and a constant $\delta(p)>0$ such that 
for each point $q\in U_p,$ there exists an $f$-geodesic $\gamma_q:[0,\delta(p)]\to M$ with $q=\gamma_q(0)$ or $\gamma_q(\delta(p)).$
Hence, for each sufficiently small $t>0,$ there exists an $f$-geodesic $\gamma_t:[0,\delta(p)]\to M$ with $\alpha(t)=\gamma_t(0)$ or $\gamma_t(\delta(p)).$
Since the geodesic segment $\gamma$ is a unique $f$-geodesic  through $p$, both tangent vectors $w_\infty$ defined in Lemmas \ref{primitive first variation formula, ver.1} and \ref{primitive first variation formula, ver.2} equal  $\dot\gamma(f(p)).$
Therefore, we get \eqref{eqN.23}.

$\qedd$
\end{proof}
\begin{remark}
Innami \cite{In1} gets a similar result to Proposition \ref{generalized first variation formula for almost distance functions} for Busemann functions on a Busemann G-space.
\end{remark}

{\bf  Proof of Theorem A}\\
Suppose that the point $p$ admits a unique maximal $f$-geodesic. 
From  Proposition \ref{generalized first variation formula for almost distance functions} it follows that
\begin{equation}\label{eq4.4}
\lim_{s\searrow 0}\frac{f(\alpha(s))-f(\alpha(0))}{s}=
g_{\dot\gamma( f(p) )}(\dot\gamma(f(p) ) ,\dot\alpha(0) )
\end{equation}
holds for any unit speed geodesic $\alpha(s)$ emanating from $p=\alpha(0).$
Then,
it follows from Lemma 2.4 in \cite{ST} that $f$ is differentiable at $p,$ and it is trivial  from \eqref{eq4.4} that 
\eqref{diff f} holds.

Suppose next that $f$ is differentiable at $p.$
Let $\alpha$ be any $f$-geodesic  through $p.$
Since $f(\alpha(t))=t$ holds on $[f(p)-\delta(p),f(p)]$ or $[f(p),f(p)+\delta(p)],$
we get
\begin{equation}\label{eqKei Kondo}
g_{\nabla f_p}(\nabla f_p,\dot\alpha(f(p)))=df_p(\dot\alpha(f(p) ) )=1.
\end{equation}
On the other hand, the Cauchy-Schwartz inequality for Finsler norms (see for instance \cite[Lemma 1.2.3]{S}) says that
$$g_{Y} (Y,V)\leq F(Y)F(V),$$
holds for any nonzero vectors $Y , V$ and 
equality holds if and only if $Y=\lambda V$ for some $\lambda>0.$
By \eqref{eqKei Kondo}, equality holds in our case.
Hence, ${\dot\alpha} (f(p))={\nabla f}_p$, 
that is,
 the point $p$ admits   a unique $f$-geodesic  with the velocity  vector $\dot\alpha(f(p))={\nabla }f_p$. 
$\qedd$\\
\bigskip

As corollaries to Theorem A, we get that
\begin{enumerate}
\item[(i)] a given point $p$ of a connected and forward complete Finsler manifold $M$ is a differentiable point of the  Busemann function of a ray $\gamma$ if and only if $p$ admits a unique coray of $\gamma$, and that
\item[(ii)]  a  point $q$   of a connected and backward complete Finsler manifold $M$ is differentiable for the  squared distance function $d_N{}^2$
from a  closed subset $N$ of $ M$ if and only if $q$ admits a unique $d_N$-geodesic from $N$ or $q\in N.$
\end{enumerate}

Indeed, the first claim  was proved for the Riemannian case in \cite{KT1}, and in 
\cite{Sa}  for the Finslerian case.  Likewise, the second claim was
proved  for a closed subset of a complete Riemannian manifold in \cite{MM}, by 
Mantegazza and Mennucci, and in \cite{ST} for the Finslerian case.



\section{ Characterizations of $f$-geodesics}
In this section, some characterizations of $f$-geodesics are given.
It was proved  by Sabau  \cite{Sa} that  a ray $\sigma : [0,\infty)\to M$ is a $B_\gamma$-geodesic, i.e. a coray of $\gamma,$ if and only if  the subarc of $\sigma$ in $B_\gamma^{-1}(-\infty,a]$ is a {\it reverse  $B_\gamma^{-1}[a,\infty)$-segment} for any  $a>B_\gamma(\sigma(0)).$
Here $B_\gamma$ denotes the Busemann function of $\gamma,$ and 
the definition  of the {\it reverse $B_\gamma^{-1}[a,\infty)$-segment} is given  for general closed subsets $N$ in the following.

\begin{definition}
A unit speed geodesic $\gamma:[a,b]\to M$ is called a {\it reverse $N$-segment}
for a closed subset $N$ of a connected Finsler manifold $M,$  if $\gamma$ is a $(-1)d^N$-geodesic and $\gamma(b)\in N,$ i.e., 
$d^N(\gamma(b-t))=t$  holds on $[a,b],$
where $d^N$  is the function defined in Example  \ref{exd^N}.
\end{definition}

Remark that 
 a $d_N$-geodesic $\gamma: [a,b]\to $  is called an  $N$-{\it segment}
in \cite{ST}, if $\gamma(a)\in N,$ i.e., $d_N(\gamma(t))=t-a$ on $[a,b],$ where
the function $d_N$ is defined in Example \ref{exd_N}, and that
our  reverse $N$-segments are called {\it forward }$N$-{\it segments} in \cite{Sa}.
Notice also that 
if we  reverse the parameter of a reverse $N$-segment and  the Finsler structure $F,$  then we get  an $N$-segment with respect to the reversed Finsler structure.
Here the reversed Finsler structure $\overline F$ of F means $\overline F(v):=F(-v).$
\bigskip

Let   $f:M\to \R$ be  a 1-Lipschitz function on a connected Finsler manifold $(M,F).$

\begin{lemma}\label{lem: f-geod are smooth at int points} 
Let $\alpha : [a,t_{0}] \to M$ and $\beta : [t_{0},b] \to M$ denote two $f$-geodesics satisfying $\alpha(t_{0})=\beta(t_{0})$. Then the broken geodesic 
$\gamma: [a,b]\to M$
obtained from  $\alpha$ and $\beta$ is an $f$-geodesic, i.e. $\gamma$ is smooth at $\gamma(t_{0})=\alpha(t_{0})=\beta(t_{0})$.
Hence, any $f$-geodesic emanating from an interior point of a maximal $f$-geodesic $\gamma$ is a subarc of $\gamma.$
\end{lemma}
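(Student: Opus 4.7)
The plan is to reduce the entire assertion to Lemma \ref{lem: Lipschitz curve is f-geod} applied to the concatenated curve. First I would let $\gamma:[a,b]\to M$ be the curve obtained by gluing $\alpha$ and $\beta$ at $t_0$. This is a unit-speed Lipschitz curve: each piece is a unit-speed smooth geodesic, the two pieces meet continuously at $\gamma(t_0)$, and $|\dot\gamma|=1$ a.e. Using the $f$-geodesic condition on each piece,
\begin{equation*}
f(\gamma(b))-f(\gamma(a)) = \bigl(f(\beta(b))-f(\beta(t_0))\bigr) + \bigl(f(\alpha(t_0))-f(\alpha(a))\bigr) = (b-t_0)+(t_0-a) = b-a.
\end{equation*}
Lemma \ref{lem: Lipschitz curve is f-geod} then gives at once that $\gamma$ is a minimal geodesic segment and an $f$-geodesic. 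Since a minimal geodesic segment is a regular smooth geodesic in the usual sense, no corner can occur at $t_0$; equivalently $\dot\alpha(t_0)=\dot\beta(t_0)$, which is exactly the smoothness assertion.

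For the second statement I would fix a maximal $f$-geodesic $\tilde\gamma$ with canonical parameter on some interval $I$, an interior point $p=\tilde\gamma(t_0)$, and an $f$-geodesic $\beta$ with $p$ as one of its endpoints. After reparametrizing $\beta$ to canonical parameter, suppose $\beta:[t_0,t_0+\delta]\to M$ with $\beta(t_0)=p$ (the case where $p$ is the upper endpoint of $\beta$ is symmetric). Applying the first half of the lemma to the pair $\tilde\gamma|_{[a,t_0]}$ and $\beta$ produces a smooth $f$-geodesic on $[a,t_0+\delta]$, so in particular $\dot\beta(t_0)=\dot{\tilde\gamma}(t_0)$. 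By the uniqueness of the geodesic flow with prescribed initial data, $\beta(t)=\tilde\gamma(t)$ on the overlap $[t_0,\min(t_0+\delta,\sup I)]$. If $\beta$ extended strictly past $\tilde\gamma$, the concatenation would be a genuine $f$-geodesic properly containing $\tilde\gamma$, contradicting maximality; hence $\beta\subset\tilde\gamma$.

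The one real conceptual step is the smoothness at the break point $t_0$: a priori the glued curve has only matching endpoints, not matching tangents, and the geodesic equation on each side does not by itself see across the corner. The corner is ruled out not by the geodesic ODE but by the minimization hidden in the $f$-identity: being 1-Lipschitz forces the concatenation to minimize length between $\gamma(a)$ and $\gamma(b)$, and a length-minimizing Lipschitz curve parametrized by arc length is a smooth geodesic. All of this is already packaged into Lemma \ref{lem: Lipschitz curve is f-geod}, so once the bookkeeping identity $f(\gamma(b))-f(\gamma(a))=b-a$ is recorded the rest is automatic, and the second assertion is a straightforward consequence via geodesic uniqueness and the definition of maximality.
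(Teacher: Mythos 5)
Your proof is correct and takes essentially the same route as the paper: the telescoping identity $f(\gamma(b))-f(\gamma(a))=b-a$ followed by an appeal to Lemma \ref{lem: Lipschitz curve is f-geod}, which already packages the minimization argument that kills the corner. The paper leaves the second assertion as a bare ``Hence''; your elaboration via geodesic uniqueness of initial data and maximality is the intended argument and fills that in correctly.
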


\begin{proof}
Since $\alpha$ and $\beta$ are $f$-geodesics, we have
\[
f(\alpha(t_0))-f(\alpha(a))=f(\beta(t_0))-f(\alpha(a))=t_0-a,\quad
f(\beta(b))-f(\beta(t_0))=b-t_0,
\]
and by summing up these two relations we obtain
\begin{equation}\label{rel 1}
f(\gamma(b))-f(\gamma(a))=  
f(\beta(b))-f(\alpha(a))
=b-a
\end{equation}
and the conclusion follows from Lemma \ref{lem: Lipschitz curve is f-geod}.
$\qedd$
\end{proof}
Combining 
Theorem A and Lemma \ref{lem: f-geod are smooth at int points},
we get that
any $f$-geodesic $\gamma$ is the unique solution of the differential equation
$\nabla f_{\gamma(t)}=\dot\gamma(t)$  on the interior points of $\gamma.$

\begin{lemma}\label{lem:MT1}
Let $\alpha:[a,b]\to M$ be an $f$-geodesic with canonical parameter on the Finsler manifold $(M,F)$.
For each $t_0\in (a,b)$ the following statements are true
\begin{enumerate}
\item $\alpha |_{[a,t_0]}$ is an  $M_f^{a}$-segment to $\alpha(t_0),$ where
  $M_f^{a}:=f^{-1}(-\infty, a]$;
\item $\alpha |_{[t_0,b]}$ is a reverse
$ ^{b}M_f$-segment emanating from $\alpha(t_0),$ where
$ ^{b}M_f:=f^{-1}[b,\infty)$.
\end{enumerate}
\end{lemma}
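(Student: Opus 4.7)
The plan is to verify both claims by a direct two-sided comparison of distances, using only the 1-Lipschitz property of $f$, the canonical parametrization $f\circ\alpha(t)=t$, and the fact (Lemma \ref{lem: Lipschitz curve is f-geod}) that every $f$-geodesic is minimal. Both restrictions $\alpha|_{[a,t_0]}$ and $\alpha|_{[t_0,b]}$ are automatically unit-speed geodesics, so the substance reduces to the two distance identities $d_{M_f^a}(\alpha(t))=t-a$ on $[a,t_0]$ and $d^{^{b}M_f}(\alpha(s))=b-s$ on $[t_0,b]$.

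For part (1), I would first note that $\alpha(a)\in M_f^a$, since $f(\alpha(a))=a$. The upper bound $d_{M_f^a}(\alpha(t))\le t-a$ would come from using the minimizing subarc $\alpha|_{[a,t]}$ as a test curve from $\alpha(a)\in M_f^a$ to $\alpha(t)$. For the reverse inequality I would pick any $y\in M_f^a$ and combine the 1-Lipschitz inequality with $f(y)\le a$ to obtain $t-a=f(\alpha(t))-a\le f(\alpha(t))-f(y)\le d(y,\alpha(t))$, and then take the infimum over $y$. The resulting equality on $[a,t_0]$ is precisely the defining condition of an $M_f^a$-segment to $\alpha(t_0)$.

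Part (2) is handled by the mirror argument. The endpoint condition $\alpha(b)\in{}^{b}M_f$ follows from $f(\alpha(b))=b$; the upper bound $d^{^{b}M_f}(\alpha(s))\le b-s$ comes from the test curve $\alpha|_{[s,b]}$; and for the lower bound I would apply 1-Lipschitz to any $y\in{}^{b}M_f$ together with $f(y)\ge b$, yielding $b-s\le f(y)-f(\alpha(s))\le d(\alpha(s),y)$ and hence, after taking the infimum, $b-s\le d^{^{b}M_f}(\alpha(s))$. The identity $d^{^{b}M_f}(\alpha(s))=b-s$ on $[t_0,b]$, after the substitution $s=b-\tau$, is precisely the defining relation of a reverse $^{b}M_f$-segment emanating from $\alpha(t_0)$.

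I do not anticipate any substantive obstacle. The proof is a pair of two-sided sandwiches, each squeezed from above by a concrete subarc of $\alpha$ and from below by a single application of the 1-Lipschitz inequality; the only care point is to keep the reverse-segment parametrization convention straight, which the substitution $s=b-\tau$ handles.
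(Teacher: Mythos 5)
Your argument is correct and essentially the same as the paper's: both rest on the same two-sided sandwich, with the upper bound coming from the subarc of $\alpha$ from $\alpha(a)\in M_f^a$ (or to $\alpha(b)\in{}^{b}M_f$) and the lower bound coming from a single application of the 1-Lipschitz inequality against an arbitrary point of the sub- or superlevel set. The only cosmetic difference is that the paper establishes the distance identity once at the endpoint $t_0$ and then invokes Lemma \ref{lem: Lipschitz curve is f-geod} to upgrade it to the full $d_{M_f^a}$-geodesic property, whereas you verify the identity $d_{M_f^a}(\alpha(t))=t-a$ pointwise on all of $[a,t_0]$, which reaches the same conclusion directly.
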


\begin{proof}
1. Let us choose arbitrary $t_0\in (a,b).$
Let $x$ be any point of the set $M_f^{a}$, that is, $x\in M$ satisfies $f(x)\leq f(\alpha(a))=a$.
Taking into account that the function $f$ is 1-Lipschitz and $\alpha$ is  an $f$-geodesic, it follows
\begin{equation}
d(x,\alpha(t_0))\geq f(\alpha(t_0))-f(x)\geq f(\alpha(t_0))-f(\alpha(a))=d(\alpha(a), \alpha(t_0)).
\end{equation}
Thus, we have proved that $d(x,\alpha(t_0))\geq d(\alpha(a), \alpha(t_0))$  for any $x\in M_f^{a}$, and  $d_{M^a_f}(\alpha(t_0))=d(\alpha(a), \alpha(t_0))=t_0-a.$
Hence $\alpha|_{[a,t_0]}$ is an $M_f^{a}$-segment by Lemma \ref{lem: Lipschitz curve is f-geod}.

The proof of 2 is completely similar.

$\qedd$
\end{proof}

\begin{lemma}\label{lem: end points level sets}
Let $\alpha:[a,b]\to M$ be an $f$-geodesic with canonical parameter on the Finsler manifold $(M,F)$.
Then $\alpha$ is an  $M_f^{a}$-segment ending at $\alpha(b)$ and a reverse $ ^{b}M_f$-segment emanating from $\alpha(a)$.
In particular, for any $t_0\in (a,b)$, $\alpha|_{[a,t_0]}$ is a unique   $M_f^{a}$-segment to $\alpha(t_0),$  and $\alpha|_{[t_0,b]}$ is a unique reverse  $ ^{b}M_f$-segment emanating from $\alpha(t_0).$
\end{lemma}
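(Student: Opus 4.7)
My plan is to verify the two assertions separately: (i) that $\alpha$ itself realizes the distances to the level sets $M_f^{a}$ and ${}^{b}M_f$ at the appropriate endpoints, and (ii) that any candidate segment meeting $\alpha$ at an interior point must coincide with the corresponding subarc of $\alpha$, which I expect to obtain by a gluing argument that reduces uniqueness to the smoothness statement of Lemma~\ref{lem: Lipschitz curve is f-geod}. For (i), the computation in Lemma~\ref{lem:MT1} extends verbatim to the endpoints: for any $x \in M_f^{a}$ the 1-Lipschitz inequality gives $d(x,\alpha(b)) \ge f(\alpha(b)) - f(x) \ge b - a$, with equality at $x = \alpha(a) \in M_f^{a}$, so $d_{M_f^{a}}(\alpha(b)) = b-a$ is realized by $\alpha$; since $\alpha$ is a unit-speed minimal geodesic of length $b-a$, this shows $\alpha$ is an $M_f^{a}$-segment ending at $\alpha(b)$. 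Symmetrically, for $x \in {}^{b}M_f$ the inequality $d(\alpha(s),x) \ge f(x) - f(\alpha(s)) \ge b - s$, attained at $x = \alpha(b)$, yields $d^{{}^{b}M_f}(\alpha(s)) = b - s$ for all $s \in [a,b]$, so $\alpha$ is a reverse ${}^{b}M_f$-segment emanating from $\alpha(a)$.

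For (ii), fix $t_0 \in (a,b)$ and let $\beta : [c,t_0] \to M$ be any $M_f^{a}$-segment to $\alpha(t_0)$; by definition $\beta$ is a unit-speed minimal geodesic of length $t_0 - c$ with $\beta(c) \in M_f^{a}$ and $d_{M_f^{a}}(\beta(t_0)) = t_0 - c$. Matching this against $d_{M_f^{a}}(\alpha(t_0)) = t_0 - a$ from (i) forces $c = a$. The 1-Lipschitz bound then gives
\begin{equation*}
t_0 - f(\beta(a)) = f(\alpha(t_0)) - f(\beta(a)) \le d(\beta(a),\alpha(t_0)) = t_0 - a,
\end{equation*}
so $f(\beta(a)) \ge a$; together with $f(\beta(a)) \le a$ this pins down $f(\beta(a)) = a$. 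Now form the broken unit-speed Lipschitz curve $\tilde\gamma : [a,b] \to M$ obtained by concatenating $\beta$ with $\alpha|_{[t_0,b]}$. Since $f(\tilde\gamma(b)) - f(\tilde\gamma(a)) = b - f(\beta(a)) = b - a$, Lemma~\ref{lem: Lipschitz curve is f-geod} upgrades $\tilde\gamma$ to a smooth $f$-geodesic, which in particular gives $\dot\beta(t_0) = \dot\alpha(t_0)$. Then $\beta$ and $\alpha|_{[a,t_0]}$ are unit-speed geodesics sharing both terminal point $\alpha(t_0)$ and terminal velocity vector, so they coincide by uniqueness of geodesic flow. The uniqueness of the reverse ${}^{b}M_f$-segment from $\alpha(t_0)$ is obtained by the mirror concatenation $\alpha|_{[a,t_0]} \cdot \beta'$ with a candidate reverse segment $\beta'$. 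The only delicate point I anticipate is the bookkeeping to identify the parameter range and the correct $f$-value at the glued endpoint so that Lemma~\ref{lem: Lipschitz curve is f-geod} applies; once that is set up, the smoothness of the concatenation does all the remaining work.
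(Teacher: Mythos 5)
Your proof is correct, and it arrives at the same conclusion as the paper but fills in two steps the paper leaves compressed. For the first assertion, the paper applies Lemma~\ref{lem:MT1} on the truncated intervals $[a,b-\varepsilon_i]$ and $[a+\varepsilon_i,b]$ and passes to the limit as $\varepsilon_i\to 0$, invoking continuity of $d_{M^a_f}$ and $d^{{}^bM_f}$. You instead observe that the 1-Lipschitz estimate $d(x,\alpha(b))\ge f(\alpha(b))-f(x)\ge b-a$ already holds at the endpoint $t_0=b$ with equality at $x=\alpha(a)$ (and dually for $\alpha(a)$), so the limiting step is unnecessary; this is a genuine small streamlining, since nothing in the inequality in Lemma~\ref{lem:MT1}'s proof required $t_0<b$. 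For the uniqueness assertion the paper merely remarks that ``since $\alpha(t_0)$ is an interior point of $\alpha$ it follows that $\alpha|_{[a,t_0]}$ is the unique $M^a_f$-segment,'' tacitly using the standard interior-point uniqueness of segments; you unpack exactly why this holds using only tools already in the paper: you force $c=a$ and $f(\beta(a))=a$ by comparing lengths and applying the 1-Lipschitz bound, glue $\beta$ to $\alpha|_{[t_0,b]}$, apply Lemma~\ref{lem: Lipschitz curve is f-geod} to upgrade the concatenation to a smooth geodesic, and read off $\dot\beta(t_0)=\dot\alpha(t_0)$ so that $\beta=\alpha|_{[a,t_0]}$ by ODE uniqueness. (The mirror argument for the reverse $^bM_f$-segment is analogous, as you note.) Both routes are sound; yours has the advantage of being self-contained within the paper's own lemmas rather than appealing to a background fact about $N$-segments.
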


\begin{proof}
Let $\{ \ve_i  \}_i$ be a sequence of positive numbers convergent to zero. By Lemma \ref{lem:MT1}, for each $i$, $\alpha|_{[a,b-\varepsilon_i]}$ is an  $M_f^{a}$-segment ending at $\alpha(b-\ve_i)$, and  $\alpha|_{[a+\varepsilon_i,b]}$ is a reverse $ ^{b}M_f$-segment emanating from $\alpha(a+\ve_i)$, respectively.
By taking the limit of the sequence $\{ \ve_i  \}_i$ we can conclude that $\alpha$ is an $M_f^{a}$-segment ending at $\alpha(b)$, and a reverse $ ^{b}M_f$-segment emanating from $\alpha(a)$.
Let us choose an arbitrary $t_0\in (a,b)$. Since $\alpha(t_0)$ is an interior point of $\alpha$ it follows that $\alpha|_{[a,t_0]}$ is the unique  $M_f^{a}$-segment and $\alpha|_{[t_0,b]}$ is the unique reverse
$ ^{b}M_f$-segment.

$\qedd$
\end{proof}

\begin{lemma}\label{prop: local equiv of segments and f-geodesics}
Let $\alpha : [a_1,b_1]\to M$ be  an $M^{a_1}_f$-segment  to $\alpha(b_1).$ Suppose that for each $t\in(a_1,b_1)$ which is sufficiently close to $b_1,$ there exists an $f$-geodesic $\gamma_t :[c,d]\to M$ to $\alpha(t)=\gamma_t(d)$ which intersects $M^{a_1}_f.$ Then $\alpha$ is an $f$-geodesic.
\end{lemma}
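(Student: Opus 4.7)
The goal is to show $f(\alpha(b_1)) - f(\alpha(a_1)) = b_1 - a_1$; once this is in hand, Lemma \ref{lem: Lipschitz curve is f-geod} applied to $\alpha$ (which is unit speed minimal as an $M^{a_1}_f$-segment) will immediately give that $\alpha$ is an $f$-geodesic. My first task is to pin down the endpoint values. Since $\alpha(a_1) \in M^{a_1}_f$ we already know $f(\alpha(a_1)) \le a_1$; if strict inequality held, continuity of $f$ would force $\alpha(t) \in M^{a_1}_f$ for $t$ slightly greater than $a_1$, contradicting $d_{M^{a_1}_f}(\alpha(t)) = t - a_1 > 0$. Hence $f(\alpha(a_1)) = a_1$, and the 1-Lipschitz property together with $d(\alpha(a_1), \alpha(t)) = t - a_1$ yields $f(\alpha(t)) \le t$ on $[a_1, b_1]$.

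For the reverse inequality at $t$ close to $b_1$, I will put the hypothesized $f$-geodesic $\gamma_t$ with $\gamma_t(d) = \alpha(t)$ in canonical parameter, so that $f(\gamma_t(s)) = s$ on its domain and in particular $d = f(\alpha(t))$. A point of $\gamma_t$ lies in $M^{a_1}_f$ precisely when its canonical parameter is $\le a_1$, so the intersection hypothesis tells me the left endpoint of $\gamma_t$'s domain is $\le a_1$. On the other hand $\alpha(t) \notin M^{a_1}_f$ for $t > a_1$ (since $d_{M^{a_1}_f}(\alpha(t)) > 0$ and $M^{a_1}_f$ is closed), so $d = f(\alpha(t)) > a_1$. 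Therefore $a_1$ lies strictly inside the domain of $\gamma_t$ and $\gamma_t(a_1) \in M^{a_1}_f$. The subarc $\gamma_t|_{[a_1, d]}$ is itself an $f$-geodesic, hence minimal by Lemma \ref{lem: Lipschitz curve is f-geod}, so $d(\gamma_t(a_1), \alpha(t)) = d - a_1$. Since $\gamma_t(a_1) \in M^{a_1}_f$, this distance is at least $d_{M^{a_1}_f}(\alpha(t)) = t - a_1$, giving $d \ge t$ and therefore $f(\alpha(t)) = t$.

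The hypothesis produces such $\gamma_t$ only for $t$ in some left neighborhood of $b_1$, but this is enough: by continuity of $f\circ\alpha$ one lets $t \nearrow b_1$ to conclude $f(\alpha(b_1)) = b_1$. Combined with $f(\alpha(a_1)) = a_1$, this yields $f(\alpha(b_1)) - f(\alpha(a_1)) = b_1 - a_1$, and Lemma \ref{lem: Lipschitz curve is f-geod} closes the argument.

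The chief delicate point is ensuring that the canonical parameter value $a_1$ actually lies inside the domain of $\gamma_t$, which is why I need the strict inequality $f(\alpha(t)) > a_1$; this in turn uses that $\alpha(t) \notin M^{a_1}_f$ for $t > a_1$, which follows from $M^{a_1}_f$ being closed. Beyond that bookkeeping the argument is a direct comparison of two minimal paths from $M^{a_1}_f$ to $\alpha(t)$, namely the tail of $\gamma_t$ and $\alpha$ itself, and no Finslerian asymmetry issue intervenes because every estimate is read off from the 1-Lipschitz property and the minimality built into an $f$-geodesic.
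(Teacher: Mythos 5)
Your proof is correct, and it takes a route that is genuinely different from the paper's in its key step. The paper's proof invokes Lemma \ref{lem:MT1} to identify the subarc of $\gamma_t$ lying in $f^{-1}[a_1,\infty)$ as an $M_f^{a_1}$-segment ending at $\alpha(t)$, and then appeals to the uniqueness of $M^{a_1}_f$-segments to interior points of a segment to conclude that $\alpha|_{[a_1,t]}$ must literally coincide with that subarc of $\gamma_t$, hence is itself an $f$-geodesic; letting $t\nearrow b_1$ finishes. You instead bypass segment uniqueness entirely and carry out a direct distance computation: after establishing $f(\alpha(a_1))=a_1$ by a small continuity argument, you sandwich $f(\alpha(t))$ between $t$ from above (1-Lipschitz plus $d(\alpha(a_1),\alpha(t))=t-a_1$) and $t$ from below (the $f$-geodesic $\gamma_t$ in canonical parameter must reach level $a_1$, so its tail from $\gamma_t(a_1)$ to $\alpha(t)$ is a minimal path of length $f(\alpha(t))-a_1$ from a point of $M^{a_1}_f$, which cannot beat the infimum $t-a_1$). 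Then $f(\alpha(b_1))-f(\alpha(a_1))=b_1-a_1$ falls out by continuity and Lemma \ref{lem: Lipschitz curve is f-geod} concludes. Your argument is more elementary and self-contained — it uses only Lemma \ref{lem: Lipschitz curve is f-geod} and the defining inequalities — whereas the paper's version extracts slightly more, namely that $\alpha|_{[a_1,t]}$ coincides pointwise with a piece of the given $\gamma_t$. Both are perfectly valid; yours has the merit of not presupposing the uniqueness-of-segments fact, which in the Finsler setting is itself a nontrivial ingredient.
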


\begin{proof}
Choose any $t\in(a_1,b_1)$ sufficiently close to $b_1$  so that  there exists an $f$-geodesic $\gamma_t$ to $\alpha(t)$ which intersects $M^{a_1}_f.$
From Lemma \ref{lem:MT1}, it follows that the subarc of $\gamma_t$ lying in
 ${}^{a_1}M_f=f^{-1}[a_1,\infty)$ is an $M_f^{a_1}$-segment to $\alpha(t).$ 
Since $\alpha$ is also an $M^{a_1}_f$-segment and $\alpha(t)$ is an 
interior point of the  $M^{a_1}_f$-segment $\alpha,$
$\alpha|_{[a_1,t]}$ 
coincides with the subarc of $\gamma_t,$ and in particular
 $\alpha|_{[a_1,t]}$
 is an  $f$-geodesic. Since the parameter value $t$ can be chosen arbitrarily close to $b_1,$ $\alpha$ is an $f$-geodesic.

$\qedd$
\end{proof}
The following lemma is dual to Lemma \ref{prop: local equiv of segments and f-geodesics}. The proof is similar.

\begin{lemma}\label{dual to Lem3.7}
Let $\alpha : [a_1,b_1]\to M$ be  a reverse ${}^{b_1}M_f$-segment emanating from $\alpha(a_1).$ Suppose that for each $t\in(a_1,b_1)$ which is sufficiently close to $a_1,$ there exists an $f$-geodesic $\gamma_t :[c,d]\to M$ emanating from  $\alpha(t)=\gamma_t(c)$ which intersects ${}^{b_1}M_f.$ Then $\alpha$ is an $f$-geodesic.
\end{lemma}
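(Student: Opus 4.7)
The plan is to mirror the proof of Lemma \ref{prop: local equiv of segments and f-geodesics}, interchanging the sublevel set $M^{a_1}_f$ with the superlevel set ${}^{b_1}M_f$ and ``ending at'' with ``emanating from''.

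First I would fix $t \in (a_1, b_1)$ close to $a_1$, take the $f$-geodesic $\gamma_t : [c,d] \to M$ supplied by the hypothesis with $\gamma_t(c) = \alpha(t)$ and meeting ${}^{b_1}M_f$, and parametrise it canonically so that $c = f(\alpha(t))$ and $d \ge b_1$. The subarc $\gamma_t|_{[c,b_1]}$ is then an $f$-geodesic whose right endpoint lies on the level set $f^{-1}(b_1)$, and Lemma \ref{lem: end points level sets} identifies it as a reverse ${}^{b_1}M_f$-segment emanating from $\alpha(t)$. Its length $b_1 - c$ must equal $d^{{}^{b_1}M_f}(\alpha(t)) = b_1 - t$, forcing $c = t$, i.e.\ $f(\alpha(t)) = t$.

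With this parameter identification in hand, both $\alpha|_{[a_1,t]}$ and $\gamma_t|_{[t,b_1]}$ are $(-d^{{}^{b_1}M_f})$-geodesics with canonical parameter, meeting at $\alpha(t)$. Applying Lemma \ref{lem: f-geod are smooth at int points} to the 1-Lipschitz function $-d^{{}^{b_1}M_f}$ shows that their concatenation is a single smooth geodesic, so $\dot\gamma_t(t) = \dot\alpha(t)$; ODE uniqueness for geodesics with initial data $(\alpha(t), \dot\alpha(t))$ then yields $\gamma_t|_{[t,b_1]} = \alpha|_{[t,b_1]}$. In particular, $\alpha|_{[t,b_1]}$ is an $f$-geodesic and $f(\alpha(s)) = s$ on $[t,b_1]$.

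Letting $t \searrow a_1$ gives $f(\alpha(s)) = s$ on $(a_1,b_1]$, and by continuity on all of $[a_1,b_1]$. Since $\alpha$ is already a unit-speed geodesic and $f(\alpha(b_1)) - f(\alpha(a_1)) = b_1 - a_1$, Lemma \ref{lem: Lipschitz curve is f-geod} promotes it to an $f$-geodesic on the entire interval. I expect the subtlest step to be reconciling the canonical $f$-parameter of $\gamma_t$ with the reverse-segment parameter of $\alpha$: these are a priori unrelated, and the identity $f(\alpha(t)) = t$ must be extracted from the distance equality $b_1 - c = b_1 - t$ before the tangent-matching argument via Lemma \ref{lem: f-geod are smooth at int points} can be invoked.
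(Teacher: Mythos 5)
Your proof is correct and follows essentially the route the paper intends for this lemma (the paper merely remarks that the proof is similar to that of Lemma~\ref{prop: local equiv of segments and f-geodesics}, dualizing sublevel and superlevel sets). The only substantive addition is that you spell out the uniqueness of the reverse ${}^{b_1}M_f$-segment emanating from an interior point of $\alpha$ --- via the parameter alignment $f(\alpha(t))=t$ and an application of Lemma~\ref{lem: f-geod are smooth at int points} to the function $-d^{{}^{b_1}M_f}$ --- whereas the paper treats that uniqueness as an understood property of reverse segments.
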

Combining Lemmas \ref{lem: end points level sets} and \ref{prop: local equiv of segments and f-geodesics}, we obtain,
\begin{proposition}\label{Prop3.14}
Let $\alpha : [a_1,b_1]\to M$ be a unit speed geodesic with $f(\alpha(a_1))=a_1.$
Then, $\alpha$ is an $f$-geodesic if and only if $\alpha$ is an $M^{a_1}_f$-segment
which admits a small positive $\delta_1$ such that for
each point $ t\in(b_1-\delta_1,b_1)\subset(a_1,b_1),$
there exists an $f$-geodesic $\gamma_t$ to $\alpha(t)$ which intersects $M^{a_1}_f.$

\end{proposition}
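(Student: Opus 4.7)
The plan is to prove the two implications separately, since each is essentially an immediate consequence of one of the preceding lemmas.

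For the only-if direction, I assume $\alpha$ is an $f$-geodesic. First I invoke Lemma~\ref{lem: end points level sets}, which yields directly that $\alpha$ is an $M^{a_1}_f$-segment ending at $\alpha(b_1)$. Then, to produce the required family of auxiliary $f$-geodesics, I simply take $\gamma_t:=\alpha|_{[a_1,t]}$ for every $t\in(a_1,b_1)$ (in particular for $t$ in any subinterval $(b_1-\delta_1,b_1)$). This restriction is itself an $f$-geodesic terminating at $\alpha(t)$, and its starting point $\alpha(a_1)$ satisfies $f(\alpha(a_1))=a_1$, hence lies in $M^{a_1}_f=f^{-1}(-\infty,a_1]$; so $\gamma_t$ intersects $M^{a_1}_f$, as required.

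For the if direction, the hypothesis on $\alpha$ is literally the hypothesis of Lemma~\ref{prop: local equiv of segments and f-geodesics}: namely, $\alpha$ is an $M^{a_1}_f$-segment to $\alpha(b_1)$, and for every $t\in(b_1-\delta_1,b_1)$ (which is in particular a cofinal collection of parameter values approaching $b_1$) there exists an $f$-geodesic $\gamma_t$ to $\alpha(t)$ meeting $M^{a_1}_f$. Applying Lemma~\ref{prop: local equiv of segments and f-geodesics} directly concludes that $\alpha$ is an $f$-geodesic.

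Since both implications reduce immediately to already-established lemmas, no genuine obstacle should arise; the only point requiring a brief verification is that in the only-if direction the restriction $\alpha|_{[a_1,t]}$ really qualifies as an $f$-geodesic meeting $M^{a_1}_f$, and this follows at once from the definition of $f$-geodesic together with the normalization $f(\alpha(a_1))=a_1$ that is built into the statement.
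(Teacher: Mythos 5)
Your proof is correct and takes exactly the approach the paper intends: the paper states the proposition with only the phrase ``Combining Lemmas \ref{lem: end points level sets} and \ref{prop: local equiv of segments and f-geodesics}, we obtain,'' and you have supplied precisely the missing details, namely that the normalization $f(\alpha(a_1))=a_1$ makes the parameter canonical so Lemma \ref{lem: end points level sets} applies, that the restrictions $\alpha|_{[a_1,t]}$ furnish the required auxiliary $f$-geodesics meeting $M^{a_1}_f$, and that the converse is literally the hypothesis of Lemma \ref{prop: local equiv of segments and f-geodesics}. No gaps.
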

Combining Lemmas \ref{lem: end points level sets} and \ref{dual to Lem3.7}, we obtain,
\begin{proposition}\label{dualProp3.14}
Let $\alpha : [a_1,b_1]\to M$ be a unit speed geodesic with $f(\alpha(b_1))=b_1.$
Then, $\alpha$ is an $f$-geodesic if and only if $\alpha$ is a reverse ${}^{b_1}M_f$-segment
which admits a small positive $\delta_1$ such that for
each point $ t\in(a_1,a_1+\delta_1)\subset(a_1,b_1),$
there exists an $f$-geodesic $\gamma_t$ to $\alpha(t)$ which intersects $^{b_1}M_f.$
\end{proposition}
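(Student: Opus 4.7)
The plan is to deduce this proposition immediately from Lemmas \ref{lem: end points level sets} and \ref{dual to Lem3.7}, exactly in parallel with the way Proposition \ref{Prop3.14} was derived from Lemmas \ref{lem: end points level sets} and \ref{prop: local equiv of segments and f-geodesics}. So the proof decomposes naturally into the two directions of the biconditional, each of which is a direct appeal to one of those lemmas.

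For the ``only if'' direction I assume $\alpha$ is an $f$-geodesic. The hypothesis $f(\alpha(b_1))=b_1$ together with the $f$-geodesic property gives $f(\alpha(t))=t$ throughout $[a_1,b_1]$, i.e.\ the parameter is canonical, so Lemma \ref{lem: end points level sets} applies and yields at once that $\alpha$ is a reverse ${}^{b_1}M_f$-segment emanating from $\alpha(a_1)$. For the auxiliary clause one exhibits the required $f$-geodesics explicitly: for every $t\in(a_1,b_1)$ the subarc $\alpha|_{[t,b_1]}$ is itself an $f$-geodesic through $\alpha(t)$, and its other endpoint $\alpha(b_1)$ lies in ${}^{b_1}M_f=f^{-1}[b_1,\infty)$, so $\gamma_t:=\alpha|_{[t,b_1]}$ works with any choice $\delta_1\in(0,b_1-a_1]$.

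For the ``if'' direction the hypothesis is already in the exact form required by Lemma \ref{dual to Lem3.7}: $\alpha$ is a reverse ${}^{b_1}M_f$-segment emanating from $\alpha(a_1)$, and for each $t\in(a_1,a_1+\delta_1)$ sufficiently close to $a_1$ there is an $f$-geodesic through $\alpha(t)$ whose image meets ${}^{b_1}M_f$. Lemma \ref{dual to Lem3.7} then concludes that $\alpha$ is an $f$-geodesic, completing the proof of the equivalence.

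Since both directions collapse to a direct citation of earlier lemmas, I do not expect any genuine technical obstacle; the proof is essentially a bookkeeping exercise mirroring Proposition \ref{Prop3.14}. The only substantive point worth pausing on is the observation underlying the ``only if'' direction, namely that the tail $\alpha|_{[t,b_1]}$ of the $f$-geodesic itself supplies the required auxiliary $f$-geodesic reaching ${}^{b_1}M_f$, so that no extra construction is necessary and the auxiliary clause in the statement is automatically satisfied whenever $\alpha$ is an $f$-geodesic.
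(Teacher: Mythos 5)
Your proof is correct and follows exactly the route the paper intends: the paper itself derives Proposition \ref{dualProp3.14} by the one-line remark that it is obtained by combining Lemmas \ref{lem: end points level sets} and \ref{dual to Lem3.7}, and your two directions are precisely the unpacking of that combination, with the natural observation that the tail $\alpha|_{[t,b_1]}$ serves as the required auxiliary $f$-geodesic meeting $^{b_1}M_f$. One small point worth flagging: the statement as printed says ``an $f$-geodesic $\gamma_t$ \emph{to} $\alpha(t)$,'' a phrase apparently carried over from the non-dual Proposition \ref{Prop3.14}; as your reading makes explicit, what Lemma \ref{dual to Lem3.7} actually requires (and what the ``only if'' direction actually supplies) is an $f$-geodesic \emph{emanating from} $\alpha(t)$, so you have implicitly and correctly resolved a wording slip in the statement.
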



\section{The singular locus of almost distance functions}
Before defining  the {\it singular locus} of an almost distance function, let us review the definitions of a cut point  and the cut locus of  a point.

Let $p$ be a  point of a  Finsler manifold $M.$
 A point $q$ is called
a  {\it cut point} of $p$  along a minimal geodesic segment $\gamma$ joining  the point $p$ to $q,$
if the geodesic $\gamma$ has no extension beyond $q$ as a minimal geodesic segment.  
The {\it cut locus} of the point $p$ is, by definition, the set of cut points along all minimal geodesic segments emanating from $p.$
Notice that the cut locus  is similarly defined for  a closed subset $N$ of a Finsler manifold by making use of an $N$-segment (see \cite {ST}).
In other words,  a cut point of the point $p$ is an end point of a maximal $d_p$-geodesic
and the cut locus of $p$ is  the set of cut points  with respect to   all maximal $d_p$-geodesics.
Thus, it is natural to define the {\it singular locus} ${\cal C}(f)$ of an almost distance function $f$ on a Finsler manifold $M$ as follows.

\begin {definition}
The set 
$\cC(f) := \{ q \in M | \; q $ is an end point of a maximal
$f$-geodesic$\}$ is called the {\it singular locus} of the almost distance function $f.$
\end{definition}

For example, 
${\cal C}(d_p)=C_p\cup\{p\},$
where $C_p$ denotes the cut locus of $p,$
when $d_p$ is the distance function from a point $p$ on a forward complete Finsler manifold.
For each cut point $q$ of $p,$ there is no $d_p$-geodesic emanating from $q, $ 
and there is no $d_p$-geodesic ending at $p.$
From this observation, we notice that
 each point of the singular locus is divided into two types:

\begin{definition}\label{def upper lower singular}
The subset 
$\cC_{+}(f) := \{ p \in \cC(f)|\; $ there is no $f$-geodesic emanating from $p \}$
 is called the {\it upper singular locus} of the almost distance function $f,$ and \\
the subset
$\cC_{-}(f) := \{ p \in \cC(f) |\; $ there is no $f$-geodesic ending at $p \}$
 is called the {\it lower singular locus} of the almost distance function $f.$
\end{definition}

\begin{remark}
\begin{enumerate}
\item
For the distance function $d_p$ from a point $p\in M,$
${\cal C}_+(d_p)$ equals  the cut locus of $p,$ and ${\cal C}_-(d_p)=\{p\}.$
\item It is easy to see that
$\cC_{+}(f) \cap \cC_{-}(f)=\emptyset$ by Lemma \ref{lem: f-geod are smooth at int points}.
\item
For the Busemann function $B_\gamma$ on a bi-complete Finsler manifold,
 there is no $B_\gamma$-geodesic ending at each point of the singular locus $\cC(B_\gamma)$ of $B_\gamma$
which is called a {\it copoint} of the ray $\gamma.$
Hence, 
 $\cC_-(B_\gamma)=\cC(B_\gamma).$ 
Nasu \cite{N1} introduced first the notion of {\it copoint} for a Busemann E-space.
The copoint is called an  {\it asymptotic conjugate point} in \cite{N1,N2}.
\end{enumerate}
\end{remark}


\bigskip
One natural question to ask is when an almost distance function becomes a distance one. The answer to this question is given in the following proposition. 

\begin{proposition}\label{sufficient condition for an almost distance function to a distance function}
 Let $f$ be an almost distance function on a  backward complete  and connected Finsler manifold $M$ such that ${\cC}_-(f)\subset N:=f^{-1}(c)\neq \emptyset$, where $c:=\inf f.$
If  $f^{-1}[c,\sup f)$ is dense in $M,$
then 
	\begin{equation}\label{almost dist is dist}
	f=d_N+c,
	\end{equation}
holds
on $M,$
	where  $d_N(x):=d(N,x)$. 
\end{proposition}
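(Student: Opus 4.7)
The plan is to establish $f = d_N + c$ by proving the two inequalities separately and then extending to the full manifold by continuity. The easy direction $f \leq d_N + c$ is immediate from 1-Lipschitzness: for any $x \in M$ and any $p \in N$, $f(x) - c = f(x) - f(p) \leq d(p, x)$, so taking the infimum over $p \in N$ gives $f(x) - c \leq d_N(x)$.

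For the reverse inequality, I would work on $f^{-1}[c, \sup f)$ first. The case $x \in N$ is trivial. So let $c < f(x) < \sup f$. First I would show some $f$-geodesic ends at $x$: since $f(x) > c$ and $\cC_{-}(f) \subset N = f^{-1}(c)$, we have $x \notin \cC_{-}(f)$; the almost distance function property yields an $f$-geodesic starting or ending at $x$, and if none ended at $x$, then the maximal $f$-geodesic extension of the one starting at $x$ would have $x$ as its initial endpoint (any interior position would give an $f$-geodesic subarc ending at $x$), forcing $x \in \cC_{-}(f)$, a contradiction. Given an $f$-geodesic $\beta$ ending at $x$, I would extend $\beta$ backward maximally as an $f$-geodesic; backward completeness ensures the underlying geodesic runs indefinitely backward, so the extension terminates only because the $f$-property fails, at some canonical parameter value $a' \geq c$, giving a maximal $f$-geodesic $\tilde\beta : [a', f(x)] \to M$ with $\tilde\beta(f(x)) = x$. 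Next I would show $\tilde\beta(a') \in \cC_{-}(f)$: if any $f$-geodesic $\rho$ ended at $\tilde\beta(a')$, Lemma \ref{lem: f-geod are smooth at int points} would let us concatenate $\rho$ with $\tilde\beta$ into a strictly longer $f$-geodesic, contradicting backward maximality. Thus $\tilde\beta(a') \in \cC_{-}(f) \subset N$, forcing $a' = c$. Since $f$-geodesics are minimal geodesic segments by Lemma \ref{lem: Lipschitz curve is f-geod}, this yields $d_N(x) \leq d(\tilde\beta(c), x) = f(x) - c$.

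To deal with points where $f$ attains its supremum, I would use the density of $f^{-1}[c, \sup f)$: for $x$ with $f(x) = \sup f$, pick $x_n \in f^{-1}[c, \sup f)$ with $x_n \to x$, and pass to the limit in $f(x_n) = d_N(x_n) + c$, using that both $f$ and $d_N$ are 1-Lipschitz hence continuous.

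The main obstacle is the middle paragraph: one must leverage $\cC_{-}(f) \subset N$ together with the concatenation principle of Lemma \ref{lem: f-geod are smooth at int points} to turn local $f$-geodesic data at $x$ into a global distance realization from $N$. Backward completeness is essential to allow the underlying geodesic to extend far enough back to reach $N$, and the density hypothesis is what lets us bridge the gap to level $\sup f$ where the almost distance axiom no longer applies.
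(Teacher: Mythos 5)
Your proof is correct and follows essentially the same route as the paper's: take a maximal $f$-geodesic through (or ending at) $x$, use backward completeness to see that its left endpoint exists and lies in $\cC_-(f)\subset N$, hence at level $c$, and conclude via minimality of $f$-geodesics, then pass to all of $M$ by density and continuity. The only cosmetic differences are that you split the equality into two inequalities (the paper instead invokes Lemma~\ref{lem: end points level sets} to identify the subarc as an $N$-segment and read off $d_N(x)=f(x)-c$ directly) and that your argument locating an $f$-geodesic ending at $x$ is a bit more roundabout than simply taking any maximal $f$-geodesic through $x$.
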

\begin{proof}
Choose any $x\in f^{-1}(c,\sup f).$
Since $f$ is an almost distance function,
there exists a maximal $f$-geodesic $\gamma : I\to M$ through $x\in \gamma(I).$
Let the parameter of $\gamma$ be canonical, so that $f\circ \gamma(t)=t$ holds on the interval $I.$
Since $f\circ\gamma(t)=t\ge c$ for any $t\in I,$
$\inf I\ge c.$
Choose any $s, t\in I$ with $s<t.$
By Lemma \ref{lem: Lipschitz curve is f-geod}, we obtain
$d(\gamma(s),\gamma(t))=t-s.$
This implies that the sequence $\{ \gamma(t_i)\}$ is a backward Cauchy sequence for each decreasing sequence $\{t_i\}$ of elements of $I$ convergent to $a:=\inf I.$
Therefore, $\lim_{t\searrow a}\gamma(t)=\gamma(a)\in{\cal C}_-(f)$  exists, since $M$ is backward complete. 
Since $N\supset {\cal C}_-(f),$   we have $\gamma(a)\in N,$ and  $a=f(\gamma(a))=c.$
Since $\gamma|_{[c,f(x)]}$ is an $N$-segment by Lemma \ref{lem: end points level sets},
$d_N(x)=L(\gamma|_{[c,f(x)]})=f(x)-c.$
Hence $f=d_N+c$ holds on $f^{-1}[c,\sup f).$
Since the subset $f^{-1}[c,\sup f)$ is dense in $M,$
and since both functions $d_N$ and $f$ are continuous on $M,$ $f=d_N+c$ holds on $M.$


	$\qedd$
\end{proof}



We specialize our discussion in order to obtain  characterizations of the singular locus $\cC(f)$  in terms of  sublevel  or superlevel sets of an almost distance function
$f.$

\begin{lemma}\label{lem:f-geod from cut locus}
Let $f$ be an almost distance function on a forward  complete and connected Finsler manifold $M.$
If $p $ is  a point in $\cC_{+}(f) \cap f^{-1}(\inf f, \sup f),$ then no sequences of points of ${\cal C}_-(f)$ converge  to $p,$ and hence
there exist a neighborhood $W_p(\subset U_p)$ of $p$ and a number $\delta(p) > 0$ such that for each $q \in W_p$, there exists an $f$-geodesic $\gamma : [0,\delta(p)] \to M$ with $\gamma(\delta(p))=q$.
\end{lemma}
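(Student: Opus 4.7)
The plan is to argue by contradiction, establishing both conclusions simultaneously via a limiting argument on the initial velocities of $f$-geodesics emanating from points approaching $p$. The key observation is that, since $p\in\cC_+(f)$, no $f$-geodesic emanates from $p$; hence among the two alternatives supplied by the definition of an almost distance function at points near $p$, only the ``ending at $q$'' option can persist in the limit at $p$ itself.

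I would first prove the stronger statement concerning the neighborhood $W_p$. Fix the constant $\delta(p)>0$ and the neighborhood $U_p$ from the definition of almost distance function applied at $p$, and suppose no $W_p\subset U_p$ works. Then one may extract a sequence $q_i\to p$ with $q_i\in U_p$ such that no $f$-geodesic $\gamma:[0,\delta(p)]\to M$ satisfies $\gamma(\delta(p))=q_i$. By the defining property, each $q_i$ admits an $f$-geodesic $\gamma_{q_i}:[0,\delta(p)]\to M$ with $q_i=\gamma_{q_i}(0)$ or $q_i=\gamma_{q_i}(\delta(p))$, and by our assumption the second possibility is excluded, so $q_i=\gamma_{q_i}(0)$ and $\gamma_{q_i}$ emanates from $q_i$.

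Next I would carry out the limit. The initial vectors $\dot\gamma_{q_i}(0)$ lie in the indicatrix bundle over a compact neighborhood of $p$; compactness yields a subsequence with $\dot\gamma_{q_i}(0)\to v\in S_pM$. Forward completeness of $M$ ensures that $\gamma_\infty(t):=\exp_p(tv)$ is well-defined on $[0,\delta(p)]$, and continuous dependence of geodesics on base point and initial direction gives uniform convergence $\gamma_{q_i}\to\gamma_\infty$ on $[0,\delta(p)]$. Passing to the limit in the $f$-geodesic identity $f(\gamma_{q_i}(t))-f(\gamma_{q_i}(s))=t-s$, using continuity of the $1$-Lipschitz function $f$, I obtain $f(\gamma_\infty(t))-f(\gamma_\infty(s))=t-s$, so $\gamma_\infty$ is a unit speed $f$-geodesic emanating from $p=\gamma_\infty(0)$, contradicting $p\in\cC_+(f)$.

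Finally, the assertion that no sequence in $\cC_-(f)$ converges to $p$ follows as an immediate corollary: if $q_i\in\cC_-(f)$ and $q_i\to p$, then by definition no $f$-geodesic ends at $q_i$, in particular none of length $\delta(p)$, so the preceding argument applies verbatim and yields the same contradiction. The main obstacle is verifying that the limit geodesic $\gamma_\infty$ is defined on the whole interval $[0,\delta(p)]$ and that it is the uniform limit of the $\gamma_{q_i}$; this is precisely where forward completeness enters, together with the continuity of the exponential map in both its base point and initial direction. Everything else reduces to routine bookkeeping using the continuity of $f$ and the defining properties of $\cC_+(f)$ and of an almost distance function.
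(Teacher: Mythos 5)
Your proof is correct and uses the same essential mechanism as the paper's: extract a convergent sequence of $f$-geodesics emanating from points $q_i\to p$, pass to a limit to produce an $f$-geodesic emanating from $p$, and contradict $p\in\cC_+(f)$. The one genuine improvement in your write-up is the logical ordering: you prove the stronger $W_p$-statement first (by choosing $q_i$ for which the "ending at $q_i$'' alternative in the almost-distance definition fails) and then deduce the $\cC_-(f)$ assertion as an immediate corollary, whereas the paper first rules out sequences from $\cC_-(f)$ and then asserts the $W_p$-statement with a terse ``Hence''; passing directly from ``a neighborhood of $p$ misses $\cC_-(f)$'' to ``every nearby $q$ is the terminal endpoint of an $f$-geodesic of the fixed length $\delta(p)$'' actually requires the same limiting argument you carry out explicitly, so your arrangement is the cleaner one.
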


\begin{proof}
Since $f$ is an almost distance function, the point $p$ admits a neighborhood $U_{p}$ and a number $\delta(p) > 0$ such that  for each $q \in U_{p}$, there exists an $f$-geodesic $\gamma : [0, \delta(p)] \to M$ with $q=\gamma(0)$ or $q=\gamma(\delta(p))$.
Since $p \in \cC_{+}(f)$, there exists an $f$-geodesic $\alpha : [0, \delta(p)] \to M$ with $\alpha(\delta(p)) = p$, and there does not exist any $f$-geodesic emanating from $p$.

Supposing that there exists a sequence $\{ q_{i}\}$ of points in $\cC_-(f)$ convergent to $p,$ we will get a contradiction.
 Without loss of generality, we may assume that $q_i\in U_p$ for each $i.$
Since $q_i\in \cC_-(f)\cap U_p$  for each $i,$ there exists an $f$-geodesic 
 $ \gamma_{i} : [0, \delta(p)] \to M$
with $\gamma_{i}(0)=q_{i}.$ 
Thus, the sequence $\{\gamma_i\}$ has 
 a limit $f$-geodesic $\gamma_{\infty}: [0, \delta(p)] \to M$ emanating from $\gamma_{\infty}(0)=p.$  This is a contradiction.
Hence, there exists a neighborhood $W_p (\subset U_{p})$ of $p$ such that for each $q\in W_p$, there exists an $f$-geodesic
$\gamma : [0,\delta(p)] \to M$ with $q=\gamma(\delta(p))$.
$\qedd$
\end{proof}

The following lemma is dual to Lemma \ref{lem:f-geod from cut locus}.
\begin{lemma}\label{dual to Lemma3.16}
Let $f$ be an almost distance function on a backward  complete  and connected Finsler Manifold $M.$
If $p $ is  a point in $\cC_{-}(f) \cap f^{-1}(\inf f, \sup f)$, then no sequences of points of ${\cal C}_+(f)$ converge  to $p$ and hence 
there exist a neighborhood $W_p(\subset U_p)$ of $p$ and a number $\delta(p) > 0$ such that for each $q \in W_p$, there exists an $f$-geodesic $\gamma : [0,\delta(p)] \to M$ with $\gamma(0)=q$.
\end{lemma}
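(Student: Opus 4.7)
The plan is to prove this lemma as the exact dual of Lemma \ref{lem:f-geod from cut locus}, interchanging the roles of $\cC_+(f)$ and $\cC_-(f)$, swapping ``emanating'' with ``ending'', and replacing forward completeness by backward completeness.

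First I would invoke the almost-distance-function definition at $p$ to fix a neighborhood $U_p$ and a constant $\delta(p)>0$ such that every $q\in U_p$ admits an $f$-geodesic $\gamma_q:[0,\delta(p)]\to M$ with $q=\gamma_q(0)$ or $q=\gamma_q(\delta(p))$. Since $p\in \cC_-(f)$ forbids any $f$-geodesic from ending at $p$, the almost-distance $f$-geodesic at $p$ must satisfy $\gamma_p(0)=p$; call it $\alpha$.

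For the contradiction step, suppose a sequence $\{q_i\}\subset\cC_+(f)$ converges to $p$, with $q_i\in U_p$ eventually. Since no $f$-geodesic emanates from a point of $\cC_+(f)$, the almost-distance $f$-geodesic at each $q_i$ must satisfy $\gamma_i(\delta(p))=q_i$. The unit terminal tangent vectors $\dot\gamma_i(\delta(p))\in T_{q_i}M$ converge along a subsequence to a unit vector $w_\infty\in T_pM$, and backward completeness of $(M,F)$ provides a well-defined unit-speed geodesic $\gamma_\infty:[0,\delta(p)]\to M$ with $\gamma_\infty(\delta(p))=p$ and $\dot\gamma_\infty(\delta(p))=w_\infty$, obtained as the uniform limit of the $\gamma_i$. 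Continuity of $f$ passes the identity $f(\gamma_i(t))-f(\gamma_i(s))=t-s$ to the limit, so $\gamma_\infty$ is an $f$-geodesic ending at $p$, contradicting $p\in\cC_-(f)$.

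Having ruled out such sequences, I would choose $W_p\subset U_p$ to be an open neighborhood of $p$ disjoint from $\cC_+(f)$. For each $q\in W_p$, the almost-distance property yields an $f$-geodesic $\gamma_q:[0,\delta(p)]\to M$ with $q=\gamma_q(0)$ or $q=\gamma_q(\delta(p))$; the second alternative is excluded by the symmetric analogue of the reasoning that forced $\alpha(0)=p$ above (the case $q=\gamma_q(\delta(p))$ together with $q\notin \cC_+(f)$ would contradict the definition of $\cC_+(f)$ in exactly the dual manner). Hence $\gamma_q(0)=q$, completing the proof.

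The main obstacle is the limit construction in the contradiction step: I must verify that the sequence of $f$-geodesics, pinned at their terminal ends $q_i\to p$, produces a limit geodesic of the full length $\delta(p)$ that is itself an $f$-geodesic ending at $p$. Backward completeness is precisely the property that guarantees the backward-in-time geodesic flow from $(p,w_\infty)$ is defined through all of $[0,\delta(p)]$, which is why the hypothesis cannot be weakened to forward completeness.
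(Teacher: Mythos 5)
Your contradiction step is essentially the paper's intended argument (the paper only states that this lemma is ``dual to'' Lemma~\ref{lem:f-geod from cut locus} and gives no separate proof): each $q_i\in\cC_+(f)\cap U_p$ forces the almost-distance $f$-geodesic to \emph{end} at $q_i$, and a limit of the $\gamma_i$ produces an $f$-geodesic ending at $p$, contradicting $p\in\cC_-(f)$. Backward completeness is used correctly here to guarantee the limiting geodesic $\gamma_\infty$ exists on the full interval $[0,\delta(p)]$.

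However, your final step contains a genuine logical gap. You write that for $q\in W_p$ the alternative $q=\gamma_q(\delta(p))$ ``together with $q\notin\cC_+(f)$ would contradict the definition of $\cC_+(f)$.'' This is not a contradiction. Membership in $\cC_+(f)$ is defined by \emph{no $f$-geodesic emanating} from the point; having an $f$-geodesic \emph{ending} at $q$ is perfectly compatible with $q\notin\cC_+(f)$ — indeed every interior point of a maximal $f$-geodesic has $f$-geodesics both ending at it and emanating from it, and the almost-distance property could hand you the ``ending'' one. So disjointness of $W_p$ from $\cC_+(f)$ does \emph{not} force the almost-distance $f$-geodesic to emanate from $q$. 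What one actually knows from $q\notin\cC_+(f)$ is only that \emph{some} $f$-geodesic emanates from $q$, but it may have length shorter than $\delta(p)$. The correct completion of the ``hence'' step is a second contradiction argument of the same kind as the first: if no uniform $\delta'>0$ and neighborhood $W_p$ worked, one could pick $q_i\to p$ whose maximal $f$-geodesics emanating forward have lengths $\to 0$; then their forward endpoints lie in $\cC_+(f)$ (they admit no further forward extension and $f$ stays below $\sup f$ for large $i$) and converge to $p$, which contradicts the non-accumulation of $\cC_+(f)$ at $p$ that you already established. This extra argument is what is actually needed, and it is where Lemma~\ref{lem: f-geod are smooth at int points} implicitly enters to show the forward endpoint of a maximal $f$-geodesic lies in $\cC_+(f)$.
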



\begin{theorem}\label{thm:cut locus and distance properties}
Let $f$ be an almost distance function on a forward complete and connected  Finsler manifold $M.$
If $p$ is a point in $\cC_{+}(f) \cap f^{-1}(\inf f, \sup f)$, then there exist a neighborhood $V_p(\subset W_p)$ of $p$ and a positive number $\delta(p)$ such that
for any unit speed geodesic segment
$\alpha : [a_0,b_0]\to M$ with $\alpha(b_0)\in V_p$ and with 
$\alpha(a_0)\in f^{-1}(a_0),$
$\alpha$ is an f-geodesic if and only if $\alpha$ is an  $M^{a_0}_f$-segment. Here $a_0:=-1/2\cdot\delta(p)+f(p).$
In particular
\begin{equation}\label{f-cut locus coincides with N_f cut locus}
{\cC}({M}^{a_0}_{f}) \cap V_p = \cC_{+}(f) \cap V_p.
\end{equation}
 Here  ${\cal C}(M^{a_0}_f)$ denotes the cut locus of the closed subset 
$M^{a_0}_f$ of $M.$
\end{theorem}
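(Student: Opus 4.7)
The plan is to reuse the neighborhood from Lemma~\ref{lem:f-geod from cut locus} and shrink it so that every $f$-geodesic terminating in $V_p$ is forced to cross the level set $f^{-1}(a_0)$, and then read off the biconditional from Lemma~\ref{lem:MT1} and Proposition~\ref{Prop3.14}.

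First I would apply Lemma~\ref{lem:f-geod from cut locus} to obtain a neighborhood $W_p$ of $p$ and a positive number $\delta(p)$ such that every $q\in W_p$ is the endpoint of some $f$-geodesic $\gamma_q:[0,\delta(p)]\to M$; in canonical parametrization $f(\gamma_q(0))=f(q)-\delta(p)$. Setting $a_0:=f(p)-\delta(p)/2$ and choosing $V_p\subset W_p$ so small that $|f-f(p)|<\delta(p)/4$ on $V_p$, I secure three working properties: (i) $f(q)>a_0$ for every $q\in V_p$; (ii) $f(\gamma_q(0))<a_0$, so by continuity of $f\circ\gamma_q$ the $f$-geodesic $\gamma_q$ meets $M^{a_0}_f$; and (iii) after a further shrinking if needed, small forward extensions of unit speed geodesic segments ending in $V_p$ still have their endpoints in $V_p$.

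For the forward direction of the biconditional, any $f$-geodesic $\alpha:[a_0,b_0]\to M$ with $f(\alpha(a_0))=a_0$ is canonically parametrized, and Lemma~\ref{lem:MT1}(1) together with the limiting argument from Lemma~\ref{lem: end points level sets} yields that $\alpha$ is an $M^{a_0}_f$-segment. For the reverse direction, let $\alpha:[a_0,b_0]\to M$ be a unit speed $M^{a_0}_f$-segment with $\alpha(b_0)\in V_p$. For $t$ close enough to $b_0$ one has $\alpha(t)\in V_p\subset W_p$, so Lemma~\ref{lem:f-geod from cut locus} provides an $f$-geodesic $\gamma_{\alpha(t)}$ ending at $\alpha(t)$; by property (ii) this $f$-geodesic meets $M^{a_0}_f$. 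The hypothesis of Proposition~\ref{Prop3.14} is therefore satisfied and $\alpha$ is itself an $f$-geodesic.

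For the \textquotedblleft in particular\textquotedblright{} statement, I would combine the two directions with Lemma~\ref{lem: f-geod are smooth at int points}. If $q\in \cC(M^{a_0}_f)\cap V_p$, a maximal $M^{a_0}_f$-segment $\alpha$ to $q$ is an $f$-geodesic by the reverse direction; any $f$-geodesic $\beta$ emanating from $q$ would splice with $\alpha$ via Lemma~\ref{lem: f-geod are smooth at int points} into a longer $f$-geodesic, hence (by the forward direction) a longer $M^{a_0}_f$-segment, contradicting maximality of $\alpha$; so $q\in \cC_+(f)$. Conversely, if $q\in\cC_+(f)\cap V_p$, the subarc of $\gamma_q$ from its crossing with $f^{-1}(a_0)$ to $q$ is an $f$-geodesic, hence an $M^{a_0}_f$-segment by the forward direction; any proper $M^{a_0}_f$-segment extension past $q$, whose endpoint remains in $V_p$ by (iii), is an $f$-geodesic by the reverse direction and so yields an $f$-geodesic emanating from $q$, contradicting $q\in \cC_+(f)$. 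The main technical nuisance is the simultaneous bookkeeping that goes into the choice of $V_p$: it must make every $\gamma_q$ cross $f^{-1}(a_0)$, keep $f$ strictly above $a_0$ on $V_p$, and remain stable under small forward extensions of $M^{a_0}_f$-segments.
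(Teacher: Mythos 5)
Your proof is correct and takes essentially the same approach as the paper's: both apply Lemma~\ref{lem:f-geod from cut locus} to get $W_p$ and $\delta(p)$, choose $a_0=f(p)-\delta(p)/2$, shrink $V_p$ so that $f$ on $V_p$ lies strictly between $a_0$ and $a_0+\delta(p)$ (which forces every $\gamma_q$ to cross the level $f^{-1}(a_0)$), and then invoke Proposition~\ref{Prop3.14} to get the biconditional. The only difference is cosmetic: the paper sets $V_p:=W_p\cap f^{-1}(a_0,a_0+\delta(p))$ directly and declares the equality \eqref{f-cut locus coincides with N_f cut locus} ``clear,'' whereas you impose the stronger bound $|f-f(p)|<\delta(p)/4$ and carefully spell out both inclusions of \eqref{f-cut locus coincides with N_f cut locus} via the splicing Lemma~\ref{lem: f-geod are smooth at int points} and the openness of $V_p$, which is a fine (and arguably helpful) elaboration.
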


\begin{proof}
By  Lemma \ref{lem:f-geod from cut locus}, there exist a neighborhood $W_p$ of $p$ and a constant $\delta(p)>0$ such that for each $q\in W_p$ there exists an $f$-geodesic $\gamma_q : [0,\delta(p)]\to M$  to $q=\gamma_q(\delta(p)).$ Thus the geodesic $\gamma_q$ intersects $M^{a_0}_f,$ if $f(q)<a_0+\delta(p).$

Let us put 
$$V_p :=W_p \cap f^{-1}(a_0,a_0+\delta(p)).$$
Let $\alpha : [a_0,b_0]\to M$ denote a unit speed geodesic segment with $\alpha(b_0)\in V_p$ and with $\alpha(a_0)\in f^{-1}(a_0).$
Since $V_p$ is open, it is clear that $\alpha(t)\in V_p$ if $t\in(a_0,b_0)$ is sufficiently close to $b_0.$ Therefore,  from Lemma \ref{lem:f-geod from cut locus} it follows that
for each $t\in(a_0,b_0)$ sufficiently close to $b_0,$
there exists an $f$-geodesic $\gamma_t :[0,\delta(p)]\to M$ with $\alpha(t)=\gamma_t(\delta(p)),$ which intersects $M^{a_0}_f,$ and 
it follows from Proposition \ref{Prop3.14}   that $\alpha$ is an $M^{a_0}_f$-segment if and only if $\alpha$ is an $f$-geodesic. 
Hence, the equation \eqref{f-cut locus coincides with N_f cut locus} is clear.

$\qedd$
\end{proof}

Since the Finsler distance function $d(\cdot,\cdot)$ is not always symmetric, we need the notions of the reverce cut points and the reverse cut locus of a closed subset $N.$
For a closed subset $N$ of a connected Finsler manifold $M,$ a {\it reverse cut point} of $N$ is, by definition,  the end point of  a maximal reverse $N$-segment emanating from this point, and the {\it reverse cut locus} of $N$ is the set of all reverse cut points of $N$ along all reverse $N$-segments.
Thus, the reverse cut locus of $N$ equals the lower singular locus of the almost distance  function $(-1)d^N.$
\begin{theorem}\label{thm:cut locus and distance properties 2}
Let $f$ be an almost distance function on a backward complete and connected  Finsler manifold $M.$
If $p$ is a point in  $ \cC_{-}(f) \cap f^{-1}(\inf f, \sup f)$, then there exist a neighborhood $V_p(\subset W_p)$ of $p$ and a positive number $\delta(p)$ such that 
for any unit speed geodesic segment $\alpha : [a_0,b_0]\to M$ with $\alpha(a_0)\in V_p$ and with $\alpha(b_0)\in f^{-1}(b_0),$ $\alpha$ is an $f$-geodesic if and only if $\alpha$ is a reverse $ ^{b_0}M_f$-segment. Here $b_0:=1/2\cdot\delta(p)+f(p).$
In particular,
\begin{equation}\label{f-cut locus coincides with N_f cut locus 2}
\cC_{rev}( ^{b_0}{M}_{f}) \cap V_p = \cC_{-}(f) \cap V_p.
\end{equation}
  Here $\cC_{rev}( ^{b_0}{M}_{f}) $ denotes the reverse cut locus of  the closed subset $ ^{b_0}{M}_{f}$ of $M.$
\end{theorem}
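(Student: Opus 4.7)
The approach is to mirror the proof of Theorem \ref{thm:cut locus and distance properties}, substituting each forward-oriented ingredient by its already-stated dual: Lemma \ref{dual to Lemma3.16} replaces Lemma \ref{lem:f-geod from cut locus}, and Proposition \ref{dualProp3.14} replaces Proposition \ref{Prop3.14}. I first apply Lemma \ref{dual to Lemma3.16} at $p\in\cC_{-}(f)\cap f^{-1}(\inf f,\sup f)$ to obtain a neighborhood $W_p$ and a number $\delta(p)>0$ such that each $q\in W_p$ admits an $f$-geodesic $\gamma_q:[0,\delta(p)]\to M$ with $\gamma_q(0)=q$; shrinking $\delta(p)$ if necessary I also arrange $f(p)+\delta(p)<\sup f$. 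Setting $b_0:=f(p)+\delta(p)/2$, I define
\begin{equation*}
V_p:=W_p\cap f^{-1}(b_0-\delta(p),b_0),
\end{equation*}
which is an open neighborhood of $p$ since $f(p)=b_0-\delta(p)/2$ lies in the open interval $(b_0-\delta(p),b_0)$.

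For each $q\in V_p$, canonicity of the parametrization of $\gamma_q$ gives $f(\gamma_q(\delta(p)))=f(q)+\delta(p)>b_0$, so $\gamma_q$ crosses ${}^{b_0}M_f$. Now let $\alpha:[a_0,b_0]\to M$ be any unit speed geodesic with $\alpha(a_0)\in V_p$ and $\alpha(b_0)\in f^{-1}(b_0)$. Openness of $V_p$ and continuity of $\alpha$ yield $\delta_1>0$ with $\alpha(t)\in V_p$ for $t\in(a_0,a_0+\delta_1)$, so from each such $\alpha(t)$ there emanates an $f$-geodesic meeting ${}^{b_0}M_f$. The hypotheses of Proposition \ref{dualProp3.14} are therefore satisfied, and that proposition yields the claimed equivalence: $\alpha$ is an $f$-geodesic if and only if $\alpha$ is a reverse ${}^{b_0}M_f$-segment.

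The set equality \eqref{f-cut locus coincides with N_f cut locus 2} then follows by running the equivalence in both directions. For $\cC_{-}(f)\cap V_p\subseteq\cC_{rev}({}^{b_0}M_f)\cap V_p$: given $q\in\cC_{-}(f)\cap V_p$, I truncate $\gamma_q$ at its unique crossing with $f^{-1}(b_0)$ and reparametrize canonically to obtain an $f$-geodesic $\alpha:[f(q),b_0]\to M$ emanating from $q$, which by the equivalence is a reverse ${}^{b_0}M_f$-segment; any backward extension of $\alpha$ as a reverse ${}^{b_0}M_f$-segment would, again via the equivalence applied at the slightly shifted starting point (still in $V_p$), be an $f$-geodesic ending at $q$, contradicting $q\in\cC_{-}(f)$. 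For the reverse inclusion: given $q\in\cC_{rev}({}^{b_0}M_f)\cap V_p$ with maximal reverse segment $\alpha$ emanating from $q$, note that $f(\alpha(b_0))=b_0$ automatically (otherwise $\alpha$ could be shortened by sliding its right endpoint leftward inside ${}^{b_0}M_f$); the equivalence then makes $\alpha$ an $f$-geodesic, and any $f$-geodesic ending at $q$ would, combined with $\alpha$ via Lemma \ref{lem: f-geod are smooth at int points}, extend $\alpha$ backward as an $f$-geodesic, hence as a reverse ${}^{b_0}M_f$-segment by Lemma \ref{lem: end points level sets}, contradicting maximality.

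The main obstacle is the bookkeeping for the backward extensions in each inclusion: one must verify that a putative extension $\tilde\alpha$ still satisfies the hypotheses of Proposition \ref{dualProp3.14}, namely that its left endpoint lies in $V_p$ (ensured for small extensions by openness of $V_p$) and that its right endpoint at parameter $b_0$ is unchanged by the extension, so $f(\tilde\alpha(b_0))=b_0$ is preserved. Once these checks are in place, Lemma \ref{lem: end points level sets} translates freely between $f$-geodesics and reverse ${}^{b_0}M_f$-segments and the contradictions close cleanly.
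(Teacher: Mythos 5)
Your proof is correct and follows exactly the route the paper intends: the paper leaves Theorem \ref{thm:cut locus and distance properties 2} without an explicit proof because it is the formal dual of Theorem \ref{thm:cut locus and distance properties}, and you carry out precisely that dualization, replacing Lemma \ref{lem:f-geod from cut locus} by Lemma \ref{dual to Lemma3.16} and Proposition \ref{Prop3.14} by Proposition \ref{dualProp3.14}, with the same choice $V_p:=W_p\cap f^{-1}(b_0-\delta(p),b_0)$ (which equals $W_p\cap f^{-1}(f(p)-\delta(p)/2,f(p)+\delta(p)/2)$, mirroring the original construction). Your additional bookkeeping for the set equality, in particular checking $f(\alpha(b_0))=b_0$ for a maximal reverse segment and keeping the extended left endpoint inside $V_p$, correctly fills in details the paper declares ``clear'' in the $\cC_+$ case.
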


\begin{lemma}\label{lem3.20} 
If $p\in f^{-1}(\inf f,\sup f)$ is a point of  a bi-complete and connected Finsler manifold $M,$
then there exists a neighborhood $V_p$ of $p$
such that for  any point $q\in V_p,$
there exists an $f$-geodesic $\gamma :[0,\delta(p)]\to M$ with $q=\gamma(0)$ or $\gamma(\delta(p)),$ which intersects $^{b_0}M_f$ or $M^{a_0}_f$ respectively. 
In particular,
for each point $q\in V_p,$
\begin{equation}\label{distance to the sublevel}
d({M}^{a_0}_{f},q) = f(q) - a_0
\end{equation}
or
\begin{equation}\label{distance to the sublevel inverse 2}
d(q,\; ^{b_0}{M}_{f}) = b_0 -f(q).
\end{equation}
Here $a_0:=-1/2\cdot\delta(p)+f(p), b_0:=1/2\cdot\delta(p)+f(p).$
\end{lemma}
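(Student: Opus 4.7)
The plan is to use the definition of almost distance function at $p$ to produce a candidate $f$-geodesic through each nearby $q$, then, after shrinking the neighborhood, show that such a geodesic must cross the relevant sublevel or superlevel set of $f$. The distance equalities will then follow from the minimality of $f$-geodesics (Lemma \ref{lem: Lipschitz curve is f-geod}) combined with the 1-Lipschitz property of $f$.

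First I would set
\[
V_p := U_p \cap f^{-1}(a_0,b_0),
\]
where $U_p$ and $\delta(p) > 0$ are the neighborhood and constant supplied by the definition of almost distance function at $p$. Since $f$ is continuous and $f(p) \in (a_0,b_0)$, the set $V_p$ is an open neighborhood of $p$, and every $q \in V_p$ satisfies $a_0 < f(q) < b_0$. For each such $q$, the definition yields an $f$-geodesic $\gamma_q : [0,\delta(p)] \to M$ with $q = \gamma_q(0)$ or $q = \gamma_q(\delta(p))$. Using the canonical-parameter identity $f(\gamma_q(t)) - f(\gamma_q(0)) = t$, in the first case one gets $f(\gamma_q(\delta(p))) = f(q)+\delta(p) > b_0$, so $\gamma_q$ crosses $f^{-1}(b_0)$ at time $t_0 := b_0 - f(q) \in (0,\delta(p))$; in the second case $f(\gamma_q(0)) = f(q)-\delta(p) < a_0$, so $\gamma_q$ crosses $f^{-1}(a_0)$ at time $t_1 := \delta(p) - (f(q)-a_0) \in (0,\delta(p))$. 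This establishes the intersection claim.

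For the distance equalities, in the first case the segment $\gamma_q|_{[0,t_0]}$ is a unit-speed $f$-geodesic, hence minimal by Lemma \ref{lem: Lipschitz curve is f-geod}, so $d(q,\gamma_q(t_0)) = t_0 = b_0 - f(q)$, giving $d(q,{}^{b_0}M_f) \le b_0 - f(q)$. The reverse inequality is immediate from the 1-Lipschitz property: for any $x \in {}^{b_0}M_f$ one has $f(x) \ge b_0$, hence $d(q,x) \ge f(x) - f(q) \ge b_0 - f(q)$; the infimum over $x$ yields \eqref{distance to the sublevel inverse 2}. The second case is dual: $\gamma_q|_{[t_1,\delta(p)]}$ is a minimal segment from $\gamma_q(t_1) \in M^{a_0}_f$ to $q$ of length $f(q)-a_0$, which gives the upper bound for $d(M^{a_0}_f,q)$, while the dual 1-Lipschitz inequality $d(x,q) \ge f(q) - f(x) \ge f(q) - a_0$ for $x \in M^{a_0}_f$ supplies the matching lower bound, yielding \eqref{distance to the sublevel}.

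I expect no substantive obstacle: once $V_p$ is chosen so that $f(q)$ lies strictly between $a_0$ and $b_0$, everything reduces to straightforward bookkeeping. The only care needed is in tracking which endpoint of $\gamma_q$ is pushed across the level set $f^{-1}(a_0)$ versus $f^{-1}(b_0)$, but this is determined automatically by the canonical parametrization and by whether $q$ is the initial or terminal point of $\gamma_q$.
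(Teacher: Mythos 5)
Your proof is correct and follows essentially the same route as the paper: choose $V_p := U_p \cap f^{-1}(a_0,b_0)$, use the almost-distance definition to get $\gamma_q$ through each $q\in V_p$, observe (via the $f$-geodesic relation and the fact that $b_0 - a_0 = \delta(p)$) that $\gamma_q$ must cross the appropriate level set, and conclude the distance equalities. The only stylistic difference is that the paper invokes Lemma \ref{lem: end points level sets} to conclude that the restricted subarc is an $M^{a_0}_f$-segment (resp.\ reverse ${}^{b_0}M_f$-segment), whereas you re-derive this directly from minimality of $f$-geodesics (Lemma \ref{lem: Lipschitz curve is f-geod}) and the 1-Lipschitz bound — same mathematics, just unwound one level.
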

\begin{proof}
Since $f$ is an almost distance function, there exist a neighborhood $U_p$ of $p$ and a constant $\delta(p)>0$ such that for each $q\in U_p,$
there exists an $f$-geodesic $\gamma_q: [0,\delta(p)]\to M$ with $q=\gamma_q(0)$ or $\gamma_q(\delta(p)).$ Hence if $q\in V_p:=U_p\cap f^{-1}(a_0,b_0),$ then the geodesic $\gamma_q$ intersects $M^{a_0}_f$ or $^{b_0}M_f.$

 Choose any point $q\in V_p$ and fix it.
Hence there exists an $f$-geodesic $\gamma:[0,\delta(p)]\to M$ with $q=\gamma(0)$ or $\gamma(\delta(p)).$ If the geodesic segment $\gamma$ satisfies $\gamma(\delta(p))=q$ (respectively $\gamma(0)=q$), then by Lemma \ref{lem: end points level sets}, the subarc of $\gamma$ lying $f^{-1}[a_0,b_0]$ is an $M^{a_0}_f$-segment (respectively a reverse $^{b_0}M_f$-segment).
Hence,  the equation \eqref{distance to the sublevel}  or \eqref{distance to the sublevel inverse 2} holds for each $q\in V_p.$

$\qedd$
\end{proof}
\begin{remark}
For some $p\in f^{-1}(\inf f,\sup f)\setminus{\cal C}(f)$ 
the neighborhood $V_p$ guaranteed in Lemma \ref{lem3.20} can admit both  points of the singular sets ${\cal C}_ +(f) $ and  ${\cal C}_ -(f),$ even if we choose a smaller one.
In Section 7, we will construct an almost distance function $f$ on Euclidean plane which admits a point in $\overline{ {\cal C}_+(f) }\cap\overline{  {\cal C}_-(f)  }\setminus {\cal C}(f).$
Here $\overline A$ denotes the closure of the set $A.$

\end{remark}

\begin{proposition}\label{corN.6}
Let $f $ be an almost distance function  on a   forward complete and connected Finsler manifold $M,$ 
then, the set 
\begin{equation*}
{\cal C}_+^{(2)}(f):=\{p\in f^{-1}(\inf f,\sup f) |\; \textrm{there exist at least two  $f$-geodesics  to $p$ }    \}
 \end{equation*}
is a subset of ${\cal C}_+(f).$
Moreover, for each point $p\in{\cal C}_+(f)\cap f^{-1}(\inf f,\sup f)$ admitting a unique maximal $f$-geodesic, there exists a sequence of points in ${\cal C}_+^{(2)}(f)$ converging to $p.$
The  subset ${\cal C}_-^{(2)}(f)$ of ${\cal C}_-(f)$  corresponding to ${\cal C}_+^{(2)}(f)$ also   has  the  same  properties for an almost distance function $f$ on a backward complete and connected Finsler manifold.
\end{proposition}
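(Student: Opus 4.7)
For the inclusion ${\cal C}_+^{(2)}(f)\subseteq{\cal C}_+(f)$ I argue by contradiction. If $p\in{\cal C}_+^{(2)}(f)$ admits two distinct $f$-geodesics $\alpha_1,\alpha_2$ ending at $p$ and, in addition, some $f$-geodesic $\beta$ emanates from $p$, then by Lemma \ref{lem: f-geod are smooth at int points} each concatenation $\alpha_i\cup\beta$ is smooth at $p$, so $\dot\alpha_1(f(p))=\dot\beta(f(p))=\dot\alpha_2(f(p))$. Uniqueness of geodesics with prescribed initial conditions then forces $\alpha_1=\alpha_2$ on their common domain, contradicting distinctness. Hence no $f$-geodesic emanates from $p$, i.e., $p\in{\cal C}_+(f)$.

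For the approximation assertion, suppose toward a contradiction that some neighborhood $W$ of $p$, chosen inside the $V_p$ furnished by Lemma \ref{lem:f-geod from cut locus}, satisfies $W\cap{\cal C}_+^{(2)}(f)=\emptyset$. Then every $q\in W$ admits a unique $f$-geodesic $\alpha_q$ of length $\delta(p)$ ending at $q$, and the concatenation argument from the first paragraph (applied at $q$) shows that $q$ has a unique maximal $f$-geodesic through it. Theorem A then gives that $f$ is differentiable at every point of $W$ with $\nabla f_q=\dot\alpha_q(f(q))$, so in particular $F(\nabla f)\equiv 1$ on $W$. A routine Arzel\`a--Ascoli compactness argument applied to $\{\alpha_{q_i}\}$ for $q_i\to q$, combined with the uniqueness of the limiting $f$-geodesic ending at $q$, then shows that $\nabla f$ is continuous on $W$.

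Now I invoke Peano's existence theorem in a local chart for the continuous unit vector field $\nabla f$ and obtain a $C^1$ curve $c^+:[f(p),f(p)+\eta]\to M$ with $\dot c^+(s)=\nabla f_{c^+(s)}$ and $c^+(f(p))=p$. Since $F(\dot c^+)\equiv 1$ it is unit speed, and
\begin{equation*}
\tfrac{d}{ds}f(c^+(s)) = df_{c^+(s)}\bigl(\nabla f_{c^+(s)}\bigr) = g_{\nabla f}(\nabla f,\nabla f) = F(\nabla f)^2 = 1,
\end{equation*}
so $f(c^+(s))-f(p)=s-f(p)$. Lemma \ref{lem: Lipschitz curve is f-geod} then forces $c^+$ to be a minimal geodesic segment and an $f$-geodesic emanating from $p$, contradicting $p\in{\cal C}_+(f)$. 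This contradiction produces the desired sequence in ${\cal C}_+^{(2)}(f)$ accumulating at $p$. The statement for ${\cal C}_-^{(2)}(f)$ is obtained by the symmetric argument, using Lemma \ref{dual to Lemma3.16} in place of Lemma \ref{lem:f-geod from cut locus} and integrating $\nabla f$ backward from $p$. The main technical delicacy is the continuity of $\nabla f$ on $W$, needed to apply Peano, which rests crucially on the hypothesis $W\cap{\cal C}_+^{(2)}(f)=\emptyset$ to force a unique $f$-geodesic at each $q\in W$.
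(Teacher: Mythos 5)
Your proposal is correct and follows essentially the same route as the paper: part one is the identical concatenation argument via Lemma \ref{lem: f-geod are smooth at int points}, and part two reduces to showing $f$ is $C^1$ near $p$ (via Lemma \ref{lem:f-geod from cut locus} and Theorem A) and then deriving a contradiction. The paper defers that contradiction to Proposition 2.5 of \cite{ST}, whereas you spell out the argument in full --- continuity of $\nabla f$ via a compactness/uniqueness argument, Peano integration of the unit gradient field, and Lemma \ref{lem: Lipschitz curve is f-geod} to upgrade the integral curve to an $f$-geodesic emanating from $p$ --- which is precisely the step the cited proposition supplies, and you even explicitly justify the $C^1$ regularity that the paper's one-line appeal to Theorem A glosses over.
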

\begin{proof}
Choose any $p\in {\cal C}_+^{(2)}(f).$
Since $p\in\cC_+^{(2)}(f),$ there exist at least two $f$-geodesics  to $p$.
Choose one of them, say
 $\alpha :[f(p)-\delta_1, f(p)]\to M.$  
Suppose that  $p\notin\cC_+(f).$
Thus,
there exists an $f$-geodesic $\gamma :[f(p),f(p)+\delta_2]\to M$ emanating from $p=\gamma(f(p)).$ 
By Lemma \ref{lem: f-geod are smooth at int points},
we get $\dot\alpha(f(p))=\dot\gamma(f(p))$. 
This implies that the point $p$ admits a unique maximal $f$-geodesic to $p,$ a contradiction. 
Hence $p\in {\cal C}_+(f),$ and ${\cal C}_+^{(2)}(f)\subset{\cal C}_+(f).$

Choose any  $p\in \cC_+(f)\cap f^{-1}(\inf f,\sup f)$  admitting a unique maximal  $f$-geodesic.
Suppose that there exists an open  ball $B_\delta(p)\subset f^{-1}(\inf f,\sup f)$  of radius $\delta $ centered at $p$ such that $B_\delta(p)\cap {\cal C}_+^{(2)}(f)=\emptyset.$
From Lemma \ref{lem:f-geod from cut locus}, $p$ has a neighborhood  whose each point
admits a unique maximal $f$-geodesic. 
Thus,
by Theorem A, $f$ is $C^1$ around $p.$
By making use of the same argument in the proof of Proposition 2.5 in \cite{ST}, we get a contradiction.
$\qedd$	
\end{proof}
\begin{remark}
Bishop \cite{Bh} proved Proposition \ref{corN.6} for the distance function from a point on a complete connected Riemannian manifold.  Moreover, it was proved by Sabau \cite{Sa} for Busemann functions on  a forward complete connected Finsler manifold.

\end{remark}


\section{The singular locus of an almost distance function on a 2-dimensional Finsler manifold }

Throughout  this section, $M$ always denotes a connected 2-dimensional Finsler manifold,
unless otherwise stated.
We recall that  a  homeomorphism from  the  closed interval $[0,1]$  into
 $M$ is called a {\it Jordan arc}. 
A topological space $T$ is called a {\it local tree} if 
for any point $x$ in $T$ and any neighborhood $U$ of $x,$
there exists a neighborhood $V\subset U$ of $x$ such that 
 any distinct two points in $V$ can be joined by a  Jordan arc in $V$ which is unique in V.

 A  continuous curve $c\; : \:[a,b]\to M$ 
is called {\it rectifiable} if its length 
\begin{equation}\label{d-length}
l(c):=\sup\{\sum_{i=1}^{k}\;d(c(t_{i-1}),c(t_i)) \: | \: a=:t_0<t_1<\dots<t_{k-1}<t_k:=b\}.
\end{equation}
is finite. 
Remark that  it is known that $l(c)$ equals $\int_a^b F(\dot c(t))dt$ for a Lipschitz curve $c: [a,b]\to M$ (for example, see \cite[Theorem 7.5] {ST}).

The {\it intrinsic metric} $\delta$ on $\cC(f)\cap f^{-1}(\inf f,\sup f)$ is defined as:
\begin{equation*}
\delta(q_1,q_2):=
\begin{cases}
\inf\{l(c)|\ c\ \textrm{is a rectifiable arc in }\cC(f)\cap f^{-1}(\inf f,\sup f)\ \textrm{joining } q_1\ \textrm{and } q_2\},\\
 \qquad \qquad \qquad \qquad\textrm{if $q_1,q_2\in \cC(f)$ are in the same connected component,}\\
+\infty, \qquad\qquad \qquad\textrm{otherwise}.
\end{cases}
\end{equation*}\bigskip

{\bf  Proof of Theorem B}\\
From Theorem \ref{thm:cut locus and distance properties}  (respectively Theorem \ref{thm:cut locus and distance properties 2})
it follows that
for each point $p\in\cC_+(f)\cap f^{-1}(\inf f,\sup f),$ (respectively $p\in\cC_-(f)\cap f^{-1}(\inf f,\sup f),$)
there exist a neighborhood $V_p\subset f^{-1}(\inf f , \sup f)$ of $p$ and a sublevel set $M^{a_0}_f$ (respectively a superlevel set $^{b_0}M_f$)  of $f$
such that $\cC_+(f)\cap V_p=\cC(M^{a_0}_f)\cap V_p$ (respectively $\cC_-(f)\cap V_p=\cC_{rev}(^{b_0}M_f)\cap V_p$).
Since $M$ is separable, $\cC(f)\cap f^{-1}(\inf f,\sup f)$ is covered by the union of countably many open sets $V_{p_i},$ $p_i\in \cC(f)\cap f^{-1}(\inf f,\sup f).$
By applying Theorem B in \cite{ST} to the sublevel sets $M_f^{a_0}$ and the superlevel sets $^{b_0}M_f$ determined from each point $p_i$ above,
  we obtain Theorem B. Note that from Theorem 6.4 in \cite{ST} it follows that  every Jordan arc in $\cC(f)\cap f^{-1}(\inf f,\sup f)$ is rectifible.
\bigskip

We need  the inverse function theorem for a Lipschitz map which was proved by F.C. Clarke \cite{CF1} to prove Theorem  C and its corollary.
The Lipschitz  version of the inverse function theorem is a tool of nonsmooth analysis developed  by him \cite{CF2, CLSW}.

Let $\phi : U\to \R^n$ denote a locally Lipschitz map from an open subset $U$ of 
$\R^n$ into $\R^n.$
The {\it generalized differential} $\partial \phi_p$ at a point $p \in U$
is defined by
\begin{equation*}
 \partial \phi_p:={\rm co}\{ \lim_{i\to \infty}d\phi_{p_i} | \; \{p_i\} \textrm{ converges to $p$ and $d\phi_{p_i}$ exists for each $p_i$}  \},
\end{equation*}
where ${\rm co}(A)$  denotes the convex hul of the set $A,$ when $A$ is a subset of a linear space.
Note that from  Rademacher's theorem the differential $d\phi$ of the local Lipschitz map $\phi$ exists almost everywhere.

\begin{definition}
A point $p\in U$ is called {\it nonsingular} if each element of  $\partial\phi_p$
 is of maximal rank, otherwise, it is called {\it singular}.
\end{definition}

The following theorem was proved by F. H.  Clarke \cite{CF1}.
\begin{theorem}
Let $\phi :U\to \R^n$ be a Lipschitz map from an open subset $U$ of $\R^n$ into $\R^n.$
If a point $p\in U$ is nonsigular for $\phi,$ then there exist neighborhoods $U_p, V_{\phi(p)}$  of $p$ and $\phi(p)$ respectively such that
$\phi|_{U_p} $ is a bi-Lipschitz homeomorphism  from $U_p$ onto $V_{\phi(p)}.$ 
\end{theorem}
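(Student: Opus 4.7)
The plan is to establish Clarke's Lipschitz inverse function theorem by combining a quantitative injectivity estimate, extracted from a nonsmooth mean value theorem, with a local surjectivity argument based on topological degree. Throughout, I write $\partial\phi_q$ for the generalized differential at $q$ and use the standard properties: $\partial\phi_q$ is nonempty, compact, and convex, and the set-valued assignment $q\mapsto\partial\phi_q$ is upper semicontinuous, meaning $q_n\to q$ with $A_n\in\partial\phi_{q_n}$, $A_n\to A$ forces $A\in\partial\phi_q$.

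First I would use the hypothesis that every element of the compact convex set $\partial\phi_p$ is of maximal rank, together with upper semicontinuity, to produce constants $r_0>0$ and $\mu>0$ such that on the closed ball $\overline{B_{r_0}(p)}\subset U$ every matrix appearing in any $\partial\phi_q$ is invertible with a uniform lower bound
\begin{equation*}
|Av|\ge \mu|v|,\qquad v\in\R^n,\ A\in\partial\phi_q,\ q\in\overline{B_{r_0}(p)}.
\end{equation*}
This uniform nonsingularity on a neighborhood is what drives everything.

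Next I would invoke Clarke's Lipschitz mean value theorem
\begin{equation*}
\phi(x)-\phi(y)\in\overline{\mathrm{co}}\bigl\{A(x-y):A\in\partial\phi_z,\ z\in[x,y]\bigr\},
\end{equation*}
valid on the convex set $\overline{B_{r_0}(p)}$. Writing the right-hand side as $\{A(x-y):A\in K\}$ for a compact convex set $K$ of matrices each satisfying the lower bound $\mu$, and noting that a convex combination of such matrices still obeys $|\widetilde A v|\ge\mu|v|$ after dualising through the support function of $K(x-y)$, I obtain $|\phi(x)-\phi(y)|\ge\mu|x-y|$ on $\overline{B_{r_0}(p)}$. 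This simultaneously gives injectivity of $\phi$ on the ball and Lipschitz continuity of its inverse, wherever defined, with modulus $1/\mu$.

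The remaining step, and in my view the main obstacle, is local surjectivity: one must show that $\phi\bigl(B_{r_0}(p)\bigr)$ covers an entire neighborhood of $\phi(p)$. I would do this via Brouwer degree. The injectivity estimate above already sends $\partial B_{r_0}(p)$ outside the open ball $B_{\mu r_0}\bigl(\phi(p)\bigr)$, so the continuous-map degree $\deg\bigl(\phi,B_{r_0}(p),y\bigr)$ is defined and constant for $y\in B_{\mu r_0}\bigl(\phi(p)\bigr)$. To compute it I would mollify $\phi$ to a smooth $\phi_\varepsilon$ converging uniformly to $\phi$ on $\overline{B_{r_0}(p)}$ and, by shrinking $\varepsilon$, arrange that $d\phi_\varepsilon$ remains in a small neighborhood of $\partial\phi_p$, hence still satisfies an invertibility bound close to $\mu$; the classical inverse function theorem then yields $\deg=\pm1$ for $\phi_\varepsilon$, and by homotopy invariance the same holds for $\phi$. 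Setting $V_{\phi(p)}:=B_{\mu r_0}\bigl(\phi(p)\bigr)$ and $U_p:=B_{r_0}(p)\cap\phi^{-1}(V_{\phi(p)})$ completes the proof. The delicate point, which deserves the most care, is the stability of the Jacobian bound under mollification: one must use that the mollified Jacobian is itself a convex combination of values of $d\phi$ at nearby points, so it lies in the $\varepsilon$-thickening of $\overline{\mathrm{co}}\,\partial\phi_{\overline{B_{r_0}(p)}}$, which is uniformly nonsingular by the first step.
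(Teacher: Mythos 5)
The paper states this theorem as a direct citation to Clarke \cite{CF1} and gives no proof, so there is no in-paper argument to compare against; your proposal is a reconstruction from scratch. The route you chose — uniform nonsingularity from upper semicontinuity, injectivity from the Lipschitz mean value theorem, surjectivity from Brouwer degree after mollification — is closer to Pourciau's degree-theoretic proof than to Clarke's original fixed-point argument, but it is a legitimate strategy.

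There is, however, a genuine gap in the injectivity step. The mean value theorem places $\phi(x)-\phi(y)$ in $\{Av : A\in K\}$ with $v:=x-y$ and $K:=\overline{\mathrm{co}}\bigcup_{z\in[x,y]}\partial\phi_z$, so to conclude $|\phi(x)-\phi(y)|\ge\mu|v|$ you need $|Av|\ge\mu|v|$ for every $A\in K$, that is, for every convex combination of matrices drawn from the various $\partial\phi_z$. Your Step 1 bound controls only the individual sets $\partial\phi_q$, and a convex combination of matrices each satisfying $|A_i v|\ge\mu|v|$ need not satisfy the bound: already $\tfrac12 I+\tfrac12(-I)=0$. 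The appeal to ``dualising through the support function of $K(x-y)$'' does not supply the missing inequality, because showing $0\notin K(x-y)$ is precisely what is in question. The standard fix is to strengthen Step 1 by using upper semicontinuity in set-valued form: pick $\mu>0$ so that every $A\in\partial\phi_p$ satisfies $|Av|\ge 2\mu|v|$ (compactness of $\partial\phi_p$ and continuity of $A\mapsto\min_{|v|=1}|Av|$), then pick $r_0$ so small that $\partial\phi_q\subset\partial\phi_p+\mu\overline{B}$ for all $q\in\overline{B_{r_0}(p)}$, where $\overline{B}$ is the closed unit ball of matrices. Since $\partial\phi_p+\mu\overline{B}$ is convex and closed it contains $K$, and every element of it obeys $|Av|\ge\mu|v|$. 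With this sharpened Step 1 your injectivity estimate follows, and the same containment in $\partial\phi_p+\mu\overline{B}$ also controls the mollified Jacobian in the degree computation, exactly as you intended.
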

As a corollary to this theorem, we get the implicit function theorem for a Lipschitz function. 
\begin{theorem}\label{th6.4}
Let $f$ be a Lipschitz function defined on a open subset $U$ of $\R^n.$
If  a point $p\in U$ is nonsigular for the function $f,$
then there exists an open neighborhood  $U_p\subset U$  of $p$ such that $U_p\cap f^{-1}(f(p))$ is a topological hypersurface which is bi-Lipschitz homeomorphic to an open subset of $\R^{n-1}.$
\end{theorem}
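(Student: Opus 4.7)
The plan is to reduce this implicit function theorem to the Lipschitz inverse function theorem above by augmenting $f$ with $n-1$ smooth coordinate functions. Specifically, I would seek linear functionals $L_1,\dots,L_{n-1}:\R^n\to\R$ such that the Lipschitz map
\[
\Phi:U\to\R^n,\qquad \Phi(x):=(f(x),L_1(x),\dots,L_{n-1}(x))
\]
has $p$ as a nonsingular point in Clarke's sense. Once this is achieved, the inverse function theorem yields a neighborhood $U_p$ of $p$ on which $\Phi$ is a bi-Lipschitz homeomorphism onto an open subset $V\subset\R^n$. The level set $U_p\cap f^{-1}(f(p))$ is then the preimage of the hyperplane $\{f(p)\}\times\R^{n-1}$, which $\Phi$ identifies bi-Lipschitzly with the open subset $V\cap(\{f(p)\}\times\R^{n-1})$ of $\R^{n-1}$, giving the desired conclusion.

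The core step is the choice of $L_1,\dots,L_{n-1}$. Because the $L_i$ are smooth, at any point $q$ where $df_q$ exists one has $d\Phi_q=M(df_q)$, where $M(\xi)$ denotes the $n\times n$ matrix with first row $\xi$ and remaining rows $L_1,\dots,L_{n-1}$. Since $\xi\mapsto M(\xi)$ is affine, it commutes with the limit and convex hull operations used to define Clarke's generalized Jacobian, and I expect
\[
\partial\Phi_p=\{M(\xi):\xi\in\partial f_p\}.
\]
The matrix $M(\xi)$ is invertible exactly when $\xi\notin\mathrm{span}(L_1,\dots,L_{n-1})\subset(\R^n)^*$, so nonsingularity of $\Phi$ at $p$ reduces to choosing a hyperplane $V\subset(\R^n)^*$ disjoint from $\partial f_p$ and then taking $L_1,\dots,L_{n-1}$ as a basis of $V$.

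Such a hyperplane exists by convex separation. The nonsingularity hypothesis on $p$ for the scalar function $f$ forces every element of $\partial f_p$ to be a nonzero covector, so $0\notin\partial f_p$; since $\partial f_p$ is compact and convex, Hahn--Banach produces $v\in\R^n$ with $\xi(v)>0$ for every $\xi\in\partial f_p$, and one takes $V:=\{\eta\in(\R^n)^*:\eta(v)=0\}$. The main technical obstacle I anticipate is justifying the displayed formula for $\partial\Phi_p$ from the definition: one must confirm that appending smooth coordinates to a Lipschitz scalar introduces no additional nonsmooth behavior, so that the generalized Jacobian of $\Phi$ is inherited componentwise from $\partial f_p$. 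Once this is in hand, the three steps above combine to give the theorem.
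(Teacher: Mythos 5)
Your proof is correct, and it is the standard derivation of the Lipschitz implicit function theorem from Clarke's inverse function theorem; the paper states Theorem~\ref{th6.4} as an immediate corollary without spelling out the argument, so your augmentation-by-linear-coordinates construction fills in exactly the expected details. The only step you flagged as a possible obstacle --- that $\partial\Phi_p=\{M(\xi):\xi\in\partial f_p\}$ --- is in fact unproblematic: since the $L_i$ are everywhere differentiable, $\Phi$ is differentiable precisely where $f$ is, with $d\Phi_q=M(df_q)$, so the set of limiting Jacobians of $\Phi$ at $p$ is $M$ applied to the set of limiting gradients of $f$, and affineness of $M$ lets the convex hull pass through.
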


 The generalized differential is naturally defined for a  locally Lipschitz map between smooth manifolds (see \cite{KTsp}). 
Some tools of nonsmooth analysis are introduced in differential geometry and used in  the proof of differentiable sphere theorems (see \cite{KTsp}).

Now, let us return to our 
situation.
Let $f$ denote an almost distance function on a connected Finsler manifold $M.$
By definition, a  point $p\in M$  is  singular for $f$ if and only 
if $\partial f_p$ has zero.
A singular point of the almost distance function $f$ is called {\it critical} in the sense of Clarke.

\begin{definition}
Let $C_r(f)$ denote the set of all critical points of $f$ in the sense of Clarke and $C_V(f):=f(C_r(f)),$ 
each element of which  is called  a {\it critical value} of $f.$ 
\end{definition}

Let us determine explicitly the generalized differential $\partial f_p$ at a  point $p.$
From Theorem A, it follows that $df_q(\cdot)=g_{\dot\gamma(f(q))}(\dot\gamma(f(q)),\cdot) $ for a differentiable point $q,$ where $\gamma$ denotes the unique $f$-geodesic through $q$ with canonical parameter.
Thus,
we get


\begin{equation}
\partial f_p={\rm co}\{ \omega_p(\gamma)| \; \gamma \textrm{ is an }  f\textrm{-geodesic through }  p  \},
\end{equation}
where $\omega_p(\gamma):=g_{\dot\gamma{(f(p)) } }(\dot\gamma(f(p)),\cdot ).$

A linear combination  $\sum_{i=1}^k\lambda_i \omega_p(\gamma_i)$
of $\omega_p(\gamma_i), 1\leq i\leq k,$ where  each $\gamma_i$ denotes an $f$-geodesic through $p,$ 
is an element of $\partial f_p,$ if $\sum_{i=1}^k\lambda_i=1$ and $\lambda_i\ge 0.$
Moreover,  the set of all such  linear combinations of $\omega_p(\gamma),$ where $\gamma$ denotes an $f$-geodesic,  is convex.
Therefore, we obtain,

\begin{lemma}\label{lem6.5}
The generalized differential $\partial f_p$ of $f$ at $p$ is given by 
$$\partial f_p=\{ 
\sum_{i=1}^k\lambda_i\omega_p(\gamma_i) |\; \sum_{i=1}^k\lambda_i=1, \lambda_i\geq 0,
\textrm{each } \gamma_i \textrm{ is an }f\textrm{-geodesic through } p 
 \}.$$
\end{lemma}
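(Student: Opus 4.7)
The plan is to derive Lemma 6.5 directly from the identity $\partial f_p = \mathrm{co}(S_p)$ stated in the paragraph immediately preceding the lemma, where $S_p := \{\omega_p(\gamma) \mid \gamma \text{ is an } f\text{-geodesic through } p\}$. The lemma is then the explicit unpacking of ``convex hull'' in the finite-dimensional cotangent space $T_p^*M$.

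For the inclusion of finite convex combinations into $\partial f_p$: every $\omega_p(\gamma) \in S_p$ lies in $\partial f_p$, and $\partial f_p$ is itself convex (being a convex hull), so any combination $\sum_{i=1}^{k}\lambda_i\omega_p(\gamma_i)$ with $\lambda_i\geq 0$ and $\sum\lambda_i=1$ also lies in $\partial f_p$; this inclusion is already noted in the paragraph preceding the lemma. For the reverse inclusion, recall that the convex hull of a subset of a finite-dimensional real vector space is, by the standard definition (or by Carath\'eodory's theorem), precisely the set of all finite convex combinations of its elements. Applied to $S_p \subset T_p^*M$, this identifies $\mathrm{co}(S_p)$ with the set on the right-hand side of the lemma, completing the proof.

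The real technical weight lies not in the lemma itself but in the preceding identity $\partial f_p = \mathrm{co}(S_p)$, which the text attributes to Theorem A. Justifying it rigorously requires two limit arguments. First, given a sequence $p_i \to p$ of differentiable points with $df_{p_i}$ convergent, one must extract a limit $f$-geodesic through $p$: the almost distance function property provides $f$-geodesics $\gamma_i$ through $p_i$ of uniform length $\delta(p)$, compactness of the unit tangent sphere at $p$ furnishes a convergent subsequence of initial velocities, and continuity of $f$ ensures the limit geodesic still satisfies $f\circ\gamma_\infty(t)=t$, so it is an $f$-geodesic through $p$. Second, each $\omega_p(\gamma) \in S_p$ must be realized as a limit $\lim_i df_{p_i}$; this is achieved by taking $p_i := \gamma(f(p)+t_i)$ with $t_i \to 0$, since Lemma 4.2 (together with its time-reversed analogue) forces $\gamma$ to be the unique maximal $f$-geodesic through each such interior point $p_i$, so by Theorem A the function $f$ is differentiable at $p_i$ with $df_{p_i}=\omega_{p_i}(\gamma)\to\omega_p(\gamma)$. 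I expect this second step, verifying that the limit-set in Clarke's definition coincides exactly with $S_p$, to be the main obstacle for a reader who wants a self-contained proof; the passage from the identity $\partial f_p=\mathrm{co}(S_p)$ to the explicit form in Lemma 6.5 is then purely formal.
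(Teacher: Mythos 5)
Your proposal matches the paper's route: the paper proves Lemma 6.5 by the same unwinding, first recording $\partial f_p=\mathrm{co}\{\omega_p(\gamma)\mid\gamma\text{ an }f\text{-geodesic through }p\}$ via Theorem A, then observing that every finite convex combination $\sum\lambda_i\omega_p(\gamma_i)$ lies in $\partial f_p$ and that the set of all such combinations is convex, whence equality. Your supplementary sketch of why $\partial f_p=\mathrm{co}(S_p)$ holds (extracting a limit $f$-geodesic from a sequence of differentiable points, and realizing each $\omega_p(\gamma)$ as $\lim_i df_{p_i}$ with $p_i$ interior to $\gamma$, using Lemma \ref{lem: f-geod are smooth at int points} and Theorem A) is a genuine and correct addition, as the paper asserts that identity with only ``Thus, we get'' and leaves those two limit arguments implicit.
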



\begin{lemma}\label{lem6.6}
Let $c:(a,b)\to M$ be a continuous   curve on a connected and bi-complete Finsler manifold $M,$ which 
is differentiable at some
 $t_0\in(a,b),$  and $f$ an almost distance function on the manifold $M.$
If $f\circ c$ is also differentiable at $t_0,$ and if $p:=c(t_0)\in\cC(f)\cap f^{-1}(\inf f,\sup f),$  then
$(f\circ c) '(t_0)= \omega_p(\gamma)(\dot c(t_0) )$ holds for any $f$-geodesic $\gamma$  through $p.$
In particular, $(f\circ c) '(t_0)= 0$ if $p$ is a critical point of $f$ in the sense of Clarke.
\end{lemma}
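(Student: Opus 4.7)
The plan is to exploit the primitive first-variation formulas from Lemmas \ref{primitive first variation formula, ver.1} and \ref{primitive first variation formula, ver.2}, together with the dichotomy $\cC(f)=\cC_+(f)\sqcup \cC_-(f)$. The two sets are disjoint by Lemma \ref{lem: f-geod are smooth at int points} (an $f$-geodesic ending at $p$ concatenated with one emanating from $p$ would yield a single $f$-geodesic, violating maximality at $p$), and they exhaust $\cC(f)$ since the two endpoints of any maximal $f$-geodesic belong to $\cC_-(f)$ and $\cC_+(f)$ respectively. In particular, if $p\in \cC_+(f)$ then every $f$-geodesic through $p$ has $p$ as its terminal endpoint, and analogously if $p\in\cC_-(f)$.

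If $\dot c(t_0)=0$, the differentiability of $c$ at $t_0$ gives $c(t_0+s)-p=o(s)$ in any local chart, whence both $d(p,c(t_0+s))$ and $d(c(t_0+s),p)$ are $o(|s|)$. The 1-Lipschitz property of $f$ then yields $|f\circ c(t_0+s)-f(p)|=o(|s|)$, so $(f\circ c)'(t_0)=0=\omega_p(\gamma)(\dot c(t_0))$ for every $f$-geodesic $\gamma$ through $p$.

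Suppose next $\dot c(t_0)\ne 0$ and $p\in \cC_+(f)$. By Lemma \ref{lem:f-geod from cut locus} there exist a neighborhood $W_p$ of $p$ and $\delta(p)>0$ such that every $q\in W_p$ is the right endpoint of an $f$-geodesic defined on $[-\delta(p),0]$. For $s_i\searrow t_0$, pick such $f$-geodesics $\gamma_i$ with $\gamma_i(0)=c(s_i)$; bi-completeness together with compactness of the Finslerian unit sphere allows extraction of a subsequence along which $w_\infty:=\lim_i\dot\gamma_i(0)$ exists in $S_pM$. The differentiability of $c$ yields the explicit limits
\begin{equation*}
v^f=\lim_i\frac{\exp_p^{-1}(c(s_i))}{F(\exp_p^{-1}(c(s_i)))}=\frac{\dot c(t_0)}{F(\dot c(t_0))},\qquad \lim_i\frac{d(p,c(s_i))}{s_i-t_0}=F(\dot c(t_0)).
\end{equation*}
Combining these with \eqref{eq3.16} and the linearity of $g_{w_\infty}(w_\infty,\cdot)$ in the second slot gives $(f\circ c)'(t_0)=g_{w_\infty}(w_\infty,\dot c(t_0))$, while \eqref{eq3.15}, applied to any $f$-geodesic $\gamma$ through $p$, yields after multiplication by $F(\dot c(t_0))>0$ the upper bound $(f\circ c)'(t_0)\le \omega_p(\gamma)(\dot c(t_0))$.

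Running the analogous argument with $s_i\nearrow t_0$, one finds $v^f=-\dot c(t_0)/F(-\dot c(t_0))$ and $d(p,c(s_i))/(s_i-t_0)\to -F(-\dot c(t_0))<0$; the negative factor reverses \eqref{eq3.15} to produce the matching lower bound $(f\circ c)'(t_0)\ge \omega_p(\gamma)(\dot c(t_0))$, and pinching yields the desired equality for every $f$-geodesic $\gamma$ through $p$. The case $p\in \cC_-(f)$ is completely symmetric, using Lemma \ref{dual to Lemma3.16} and Lemma \ref{primitive first variation formula, ver.1} with \eqref{eq3.4}--\eqref{eq3.5} in place of their ``ver.~2'' counterparts. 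For the ``in particular'' assertion, $p$ critical in Clarke's sense means $0\in\partial f_p$, so by Lemma \ref{lem6.5} there exist $f$-geodesics $\gamma_1,\dots,\gamma_k$ through $p$ and weights $\lambda_i\ge 0$ with $\sum_i\lambda_i=1$ and $\sum_i\lambda_i\omega_p(\gamma_i)=0$; evaluating at $\dot c(t_0)$ and applying the main equality term by term gives $(f\circ c)'(t_0)=\sum_i\lambda_i\,(f\circ c)'(t_0)=\sum_i\lambda_i\omega_p(\gamma_i)(\dot c(t_0))=0$. The main technical obstacle is the careful sign bookkeeping forced by the asymmetry of the Finsler distance: in the left-approach argument the sign reversal in $s_i-t_0<0$ must flip the inequality \eqref{eq3.15} in exactly the right direction for the two one-sided estimates to pinch to a common value rather than diverge.
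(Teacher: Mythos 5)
Your proof is correct, but it takes a genuinely different route from the paper. The paper's own argument is considerably more direct: it simply combines the $1$-Lipschitz property of $f$ with the $f$-geodesic identity to write, for \emph{every} $s$,
\begin{equation*}
f(c(s))-f(p)\le d\bigl(\gamma(f(p)-\delta(p)),c(s)\bigr)-d\bigl(\gamma(f(p)-\delta(p)),p\bigr),
\end{equation*}
divides by $s-t_0$ with the sign flip for $s<t_0$, and then lets $s\to t_0^{\pm}$, using the ordinary first variation formula \eqref{first variation} (or, equivalently, the $C^1$-smoothness of $d(\gamma(f(p)-\delta(p)),\cdot)$ near $p$) to evaluate the right side of each one-sided quotient as $\omega_p(\gamma)(\dot c(t_0))$. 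The two one-sided bounds then pinch immediately, with no need to extract subsequences or to distinguish the case $\dot c(t_0)=0$. You instead route the argument through the asymptotic first-variation Lemmas \ref{primitive first variation formula, ver.1} and \ref{primitive first variation formula, ver.2}, choosing a sequence $s_i\to t_0^{\pm}$, passing to a convergent subsequence of $\dot\gamma_i(0)$, and reading off $(f\circ c)'(t_0)$ from \eqref{eq3.16} together with the inequality \eqref{eq3.15}. This works and your sign accounting in the left-approach (multiplying \eqref{eq3.15} by $-F(-\dot c(t_0))<0$ to flip the inequality) is exactly right, but it is heavier machinery than is needed: those asymptotic lemmas were designed for the proof of Theorem A, where there is no ambient differentiable curve to push along, and their hypotheses force the normalization by $F(\exp_p^{-1}(\cdot))$ which in turn forces you to treat $\dot c(t_0)=0$ as a separate case. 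The paper's proof bypasses both complications because the single Lipschitz inequality already gives the two-sided bound uniformly in $s$. Your preliminary remarks on the disjointness $\cC_+(f)\cap\cC_-(f)=\emptyset$ and the ``in particular'' step via Lemma \ref{lem6.5} are correct and essentially match the paper.
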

\begin{proof}
Without loss of generality, we may assume that $t_0=0.$
Since $p\in\cC(f),$  it is clear that  $p\in\cC_+(f)$ or $p\in\cC_-(f).$
We will prove our lemma by assuming $p\in\cC_+(f)$.  The other case can be similarly proved.

Choose any $f$-geodesic $\gamma$ through $p$ with canonical parameter. By Lemma \ref{lem:f-geod from cut locus}, $\gamma$ is defined on $[f(p)-\delta(p),f(p)]$ and $\gamma(f(p))=p$ holds.
Since $f$ is 1-Lipschitz and $\gamma$ is an $f$-geodesic,
$$f(c(s))-f(p)\le d(\gamma(f(p)-\delta(p)),c(s))-d(\gamma(f(p)-\delta(p)),p)$$
holds for any $s\in(a,b).$
Hence for any $s<0<t$ in $(a,b)$
the equations
$$\frac{f(c(s))-f(p)}{s}\geq\frac{d(\gamma (f(p)-\delta(p)),c(s))-d(\gamma(f(p)-\delta(p)),p)}{s}$$
and 
$$\frac{f(c(t))-f(p)}{t}\leq\frac{d(\gamma (f(p)-\delta(p)),c(t))-d(\gamma(f(p)-\delta(p)),p)}{t}$$
hold.
By taking the limits of  the above equations with respect to $s,t$ respectively and  by \eqref{first variation},
we get
$$\omega_p(\gamma)(\dot c(0))\geq (f\circ c)'(0)\geq \omega_p(\gamma)(\dot c(0)).$$
Therefore, $(f\circ c)'(0)=\omega_p(\gamma)(\dot c(0))$ holds for any $f$-geodesic $\gamma$ through $p.$

Suppose that the point $p$ is a critical point of $f.$ 
It follows from Lemma \ref{lem6.5} that  for any $\omega_p\in\partial f_p,$ we have $(f\circ c)'(t_0)=\omega_p(\dot c(t_0)).$ 
From our assumption, $\partial f_p$ contains the zero 1-form. Thus $(f\circ c)'(t_0)=0.$

$\qedd$
\end{proof}

We need the following lemma to prove Lemma \ref{lem6.8}, which is the Sard Theorem for a continuous function. The proof is given in \cite[Lemma 3.2]{ShT}.

\begin{lemma}\label{lem6.7}
Let $h :(a,b)\to R$ be a continuous function.
Then 
the set
$h(D_0(h))$ is of (Lebesgue) measure zero,
where
$D_0(h):=\{ t\in(a,b)\: | \; h'(t) \textrm { exists and equals } 0\}.$

\end{lemma}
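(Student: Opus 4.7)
The plan is to imitate the classical Sard-type argument for a continuous function whose derivative vanishes on $D_0(h)$: at each such point, $h$ is infinitesimally flatter than any prescribed linear slope $\epsilon$, so the image of $D_0(h)$ restricted to a sub-interval $J$ is proportionally smaller than $|J|$; summing these contributions over a fine partition of the domain yields a total outer measure bound of order $\epsilon$, which is arbitrarily small.

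First I would reduce to the case where $D_0(h) \subset [c,d]$ with $[c,d] \subset (a,b)$ a compact sub-interval, exhausting $(a,b)$ by countably many such intervals and invoking countable subadditivity of Lebesgue outer measure. Then, fixing $\epsilon > 0$, for each $t \in D_0(h)$ choose $\delta(t) > 0$ with $|h(s) - h(t)| \le \epsilon |s - t|$ whenever $|s-t| \le \delta(t)$; this is possible exactly because $h'(t) = 0$.

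Next, stratify by setting $A_k := \{t \in D_0(h) \cap [c,d] : \delta(t) \ge 1/k\}$, so that $A_k$ is increasing in $k$ and $\bigcup_k A_k = D_0(h) \cap [c,d]$. Partition $[c,d]$ into roughly $k(d-c)$ consecutive intervals of length at most $1/k$. On each piece $J_i$, any two points of $A_k \cap J_i$ lie within mutual distance $\le 1/k \le \delta(t)$, so $h(A_k \cap J_i)$ is contained in an interval of length at most $2\epsilon/k$. Summing over the at most $k(d-c)+1$ pieces gives
\[
m^*(h(A_k)) \le \bigl(k(d-c)+1\bigr)\cdot \frac{2\epsilon}{k} \le 2\epsilon(d-c) + \frac{2\epsilon}{k},
\]
where $m^*$ denotes Lebesgue outer measure.

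Letting $k \to \infty$, upward continuity of outer measure applied to the increasing sequence $h(A_k)$ yields $m^*(h(D_0(h) \cap [c,d])) \le 2\epsilon(d-c)$; since $\epsilon > 0$ was arbitrary, this outer measure is zero, and the reduction step finishes the proof. The only slightly subtle point is that the $\delta(t)$'s have no uniform positive lower bound on $D_0(h)$, which is exactly why one must stratify into the $A_k$ and appeal to upward continuity of outer measure rather than try to cover $D_0(h)$ directly by a uniformly small scale; everything else is a routine partition-and-sum calculation.
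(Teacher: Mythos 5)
Your proof is correct. The paper does not actually reproduce a proof of this lemma, deferring it instead to~\cite[Lemma 3.2]{ShT}, so there is no in-text argument to compare against; the route you take (reduce to a compact subinterval, stratify $D_0(h)$ by the scale $\delta(t)$ at which the $\epsilon$-flatness bound $|h(s)-h(t)|\le\epsilon|s-t|$ holds, estimate each stratum by a partition-and-sum count, and pass to the union) is the standard argument for one-dimensional Sard-type statements and accomplishes exactly what is claimed. One subtlety you invoke without comment: the sets $A_k$, and hence their images $h(A_k)$, need not be Lebesgue measurable, so the ``upward continuity of outer measure'' step is appealing to the fact that Lebesgue \emph{outer} measure is continuous from below along arbitrary increasing sequences of sets. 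That property is indeed true (it follows from Borel regularity of Lebesgue outer measure), but since the more familiar statement is for measurable sets, it is worth a reference or a one-line justification so the reader does not pause there.
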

\begin{lemma}\label{lem6.8}
Let $c:(a,b)\to \cC(f)\cap f^{-1}(\inf f,\sup f)$ be a unit speed Lipschitz curve.
Then the set
$(f\circ c)(C_r(c) )$ is of measure zero,
where $$C_r(c):=\{t\in(a,b) \; |\; c(t) \in C_r(f)\}.$$

\end{lemma}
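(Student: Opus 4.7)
The plan is to reduce to Lemma \ref{lem6.7} (Sard's theorem for continuous real-valued functions) applied to the composition $h := f\circ c$. Since $f$ is $1$-Lipschitz and $c$ is unit speed Lipschitz, $h$ is Lipschitz on $(a,b)$, and Rademacher's theorem guarantees that both $c$ and $h$ are differentiable almost everywhere. Let
\begin{equation*}
E := \{t\in(a,b)\mid c\text{ and }f\circ c\text{ are both differentiable at }t\},
\end{equation*}
so that $(a,b)\setminus E$ has Lebesgue measure zero.

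The key step is to observe that $C_r(c)\cap E \subset D_0(f\circ c)$. Indeed, if $t\in C_r(c)\cap E$, then $c(t)\in C_r(f)\subset \cC(f)\cap f^{-1}(\inf f,\sup f)$ is a critical point of $f$ in Clarke's sense, and both $c$ and $f\circ c$ are differentiable at $t$. Lemma \ref{lem6.6} then yields $(f\circ c)'(t)=0$, placing $t$ in $D_0(f\circ c)$. Applying Lemma \ref{lem6.7} with $h = f\circ c$, the image $(f\circ c)(D_0(f\circ c))$ has measure zero, and hence so does $(f\circ c)(C_r(c)\cap E)$.

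It remains to control the exceptional image $(f\circ c)(C_r(c)\setminus E)$. But $C_r(c)\setminus E\subset (a,b)\setminus E$ has measure zero, and the Lipschitz map $f\circ c$ sends null sets to null sets. Therefore $(f\circ c)(C_r(c)\setminus E)$ has measure zero as well, and writing
\begin{equation*}
(f\circ c)(C_r(c)) = (f\circ c)(C_r(c)\cap E)\cup (f\circ c)(C_r(c)\setminus E)
\end{equation*}
completes the proof.

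The argument is essentially routine once Lemmas \ref{lem6.6} and \ref{lem6.7} are in place; the only mildly delicate point is checking that Lemma \ref{lem6.6} is indeed applicable at every $t\in C_r(c)\cap E$, which requires the curve $c$ to take values in $\cC(f)\cap f^{-1}(\inf f,\sup f)$, as is assumed in the statement, so that the geometric hypothesis on $p=c(t)$ needed by Lemma \ref{lem6.6} is automatic.
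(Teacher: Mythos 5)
Your proof is correct and follows essentially the same route as the paper: you decompose $C_r(c)$ into the full-measure set $E$ of joint differentiability points of $c$ and $f\circ c$ (where Lemma~\ref{lem6.6} forces $(f\circ c)'=0$, so Lemma~\ref{lem6.7} applies) and its null complement (handled by Lipschitz maps preserving null sets), which is precisely the paper's decomposition into $D_0(f\circ c)$ and $ND(c)\cup ND(f\circ c)$. No substantive differences.
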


\begin{proof}

Let $ND( c)$ denote the set  of all $t\in(a,b)$ at which $c$ is not differentiable
and 
$ND(f\circ c)$ the set of  all $t\in(a,b)$ at which $f\circ c$ is not differentiable.

By  Rademacher's theorem, it follows that both  sets $ND(c)$ and $ND(f\circ c)$ are  of measure zero.
Choose any $t\in(a,b)\setminus\left( ND( c) \cup ND(f\circ c)  \right).$
Hence $\dot c(t)$ and $(f\circ c)'(t)$ exist. If $c(t)$ is a critical point of $f$ in the sense of Clarke, then by Lemma \ref{lem6.6}, $(f\circ c)'(t)=0.$ Thus, the set $C_r(c)$ is a subset of 
$ND(c)\cup ND(f\circ c)\cup D_0(f\circ c),$
where $D_0(f\circ c)=\{ t\in(a,b) \; | \: (f\circ c)'(t) \textrm{  exists and equals }0 \}.$
Since $f\circ c$ is a Lipschitz function and $ND(c)\cup ND(f\circ c)$ is of measure zero, its  image by $f\circ c$ is also of measure zero.
Therefore, by Lemma \ref{lem6.7}, $(f\circ c)(C_r(c))$ is of measure zero.

$\qedd$
\end{proof}

{\bf  Proof of Theorem C}\\

From Theorem B, it follows that
there exist a countably many unit speed Lipschitz curves $m_i :[a_i,b_i]\to \cC(f)\cap f^{-1}(\inf f,\sup f)$ such that 
$\cC(f)\cap f^{-1}(\inf f,\sup f)\setminus E=\bigcup_{i=1}^\infty m_i[a_i,b_i],$
where $E$ denotes the set of all end points of $\cC(f).$
Let $E^{(2)}\subset E$ denote the  set of end points admitting more than one $f$-geodesic.

It follows from the proof of \cite[Theorem A(3)]{ShT}, Theorems \ref{thm:cut locus and distance properties} and \ref{thm:cut locus and distance properties 2} 
that  the set $E^{(2)}$ is a countable  set.
It is clear that 
$C_V(f)$ is a subset of the union of    
$\{\inf f,\sup f\},$ $\bigcup_{i=1}^\infty\left( (f\circ m_i)(\widetilde C_r( m_i) ) ) \right)$ and $ f(E^{(2) }),$
where $\widetilde C_r(m_i):=C_r(m_i)\cup\{a_i,b_i \}.$
Hence, by Lemma \ref{lem6.8},  the set $C_V(f)$ is of measure zero.
$\qedd$

\bigskip

Now,   {\bf Corollary to Theorem C} is clear from Theorem \ref{th6.4}. 

\begin{remark}
By Rademacher's theorem it was proved  that $ND(m_i)$ and $ND(f\circ m_i)$ are of measure zero.  Furthermore one can conclude that
for each $m_i :[a_i,b_i]\to \cC(f)\cap f^{-1}(\inf f,\sup f),$ 
$ND(m_i)$ and $ND(f\circ m_i)$ are countable. Indeed, it follows from  the proofs of  \cite[Theorem A(3)]{ShT} and \cite[Lemma 9.1]{ST} that there exist at most countably many points
on the curve $m_i$  admitting more than two $f$-geodesics, and from \cite[Propositions 2.1 and 2.2 ]{ST} it follows that $(f\circ m_i)'(t)$ and $\dot m_i(t)$ exist if $m_i(t)$ admits exactly two $f$-geodesics.
This property is very close to \cite[Corollary 10]{T}, which says that
the distance function to the cut locus of a closed submanifold $N$ of a complete 2-dimensional Riemannian manifold is differentiable except for a countably many points in the unit normal bundle of $N.$
\end{remark}
\begin{remark}
Theorem C is still true for an  arbitrary dimensional Riemannian manifold, if the function $f$ is smooth ($C^\infty$) on $f^{-1}(\inf f,\sup f)\setminus\cC(f)$, and 
$\cC(f)$ is closed. 
In fact, by combining Theorems \ref{thm:cut locus and distance properties},
\ref{thm:cut locus and distance properties 2}   and \cite[Theorem 1]{R},
we get:
\begin{theorem}\label{remThC}
Let $f$ be an almost distance function on a complete arbitrary dimensional Riemannian  manifold $M.$
If the function $f$ is smooth on $f^{-1}(\inf f,\sup f)\setminus \cC(f)$　and $\cC(f)$ is closed,
then the set $C_V(f)$ of the critical values of $f$  is of measure zero.
\end{theorem}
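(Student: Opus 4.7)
The plan is to reduce the problem to Rifford's theorem (\cite[Theorem 1]{R}) on the critical values of the distance function from a closed subset of a complete Riemannian manifold, after first showing that under the added smoothness hypothesis every critical point of $f$ in $f^{-1}(\inf f,\sup f)$ must already lie in $\cC(f)$.

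First I would verify that $C_r(f)\cap f^{-1}(\inf f,\sup f)\subset \cC(f)$. If $p$ lies in $f^{-1}(\inf f,\sup f)\setminus \cC(f)$, then by definition of the singular locus $p$ is an interior point of some maximal $f$-geodesic $\gamma$; Lemma \ref{lem: f-geod are smooth at int points} implies that $\gamma$ is the unique $f$-geodesic through $p$, and Theorem A gives that $f$ is differentiable at $p$ with $df_p(\dot\gamma(f(p)))=g_{\dot\gamma(f(p))}(\dot\gamma(f(p)),\dot\gamma(f(p)))=1$. Combined with the hypothesis that $f$ is smooth on $f^{-1}(\inf f,\sup f)\setminus \cC(f)$, this forces $\partial f_p=\{df_p\}$, a singleton not containing the zero $1$-form, so $p\notin C_r(f)$.

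Next I would localize on $\cC(f)$. For each $p\in \cC_+(f)\cap f^{-1}(\inf f,\sup f)$, Theorem \ref{thm:cut locus and distance properties} produces a neighborhood $V_p$ and a sublevel set $M_f^{a_0}$ (with $a_0=-\delta(p)/2+f(p)$) such that $\cC_+(f)\cap V_p=\cC(M_f^{a_0})\cap V_p$, while Lemma \ref{lem3.20} yields the pointwise identity $f(q)=d_{M_f^{a_0}}(q)+a_0$ on $V_p$. Hence the critical points of $f|_{V_p}$ coincide with those of $d_{M_f^{a_0}}|_{V_p}$, and \cite[Theorem 1]{R} applied to the closed set $M_f^{a_0}$ asserts that its critical values form a null subset of $\R$; consequently the critical values of $f$ arising from $V_p$ form a null set. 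A symmetric argument via Theorem \ref{thm:cut locus and distance properties 2} handles $\cC_-(f)$ using superlevel sets $^{b_0}M_f$ and the reverse cut locus, which in the Riemannian (symmetric) setting coincides with the usual cut locus so that Rifford again applies. Since $\cC(f)$ is closed and $M$ is separable, $\cC(f)\cap f^{-1}(\inf f,\sup f)$ admits a countable cover by such neighborhoods $\{V_{p_i}\}_i$; the resulting countable union of null sets, together with the at most two endpoint values $\{\inf f,\sup f\}$, gives that $C_V(f)$ has measure zero.

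The main obstacle is to confirm that Rifford's theorem, as cited, applies to the closed subset $N=M_f^{a_0}$, which is a priori only a closed set and not a smooth submanifold. If the statement in \cite{R} covers arbitrary closed subsets of a complete Riemannian manifold, the argument closes at once. If, however, only the submanifold version is available, I would exploit Step 1: since $df$ is nonvanishing on $f^{-1}(\inf f,\sup f)\setminus \cC(f)$ and $\cC(f)$ is closed, the level set $f^{-1}(a_0)$ is a smooth hypersurface away from the closed set $\cC(f)$, and by perturbing $a_0$ within the interval provided by Theorem \ref{thm:cut locus and distance properties} one can arrange that the piece of the level set relevant to $V_p$ is a smooth closed hypersurface, reducing to the submanifold case handled by Rifford.
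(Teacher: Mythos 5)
Your reconstruction follows the paper's intended line exactly: the paper's ``proof'' is a single sentence citing Theorems \ref{thm:cut locus and distance properties}, \ref{thm:cut locus and distance properties 2} and \cite[Theorem 1]{R}, and you correctly supply the step that the paper leaves implicit, namely that the smoothness hypothesis together with Theorem A (which gives $df_p\neq 0$ at any point with a unique maximal $f$-geodesic) forces every Clarke-critical point in $f^{-1}(\inf f,\sup f)$ to lie in $\cC(f)$, after which the localization via Theorems \ref{thm:cut locus and distance properties}, \ref{thm:cut locus and distance properties 2}, Lemma \ref{lem3.20}, separability, and Rifford close the argument.

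Your concern about whether \cite[Theorem 1]{R} applies to $M_f^{a_0}$ is also the right one to raise: the paper itself quotes Rifford's result as a theorem about closed \emph{smooth submanifolds}, while $M_f^{a_0}$ is merely a closed region. But the ``perturb $a_0$'' fix is not the right move, and as stated it has a genuine gap: the set of levels $a$ with $f^{-1}(a)\cap\cC(f)\neq\emptyset$ is exactly $f(\cC(f))$, over whose size you have no a priori control --- indeed bounding things of this sort is precisely what the theorem is supposed to deliver, so you cannot assume a good $a_0$ exists by perturbation. The observation that actually closes the gap, and makes perturbation unnecessary, is this: for each $q\in V_p$, the foot point of $q$ on $M_f^{a_0}$ is the point where the $f$-geodesic $\gamma_q:[0,\delta(p)]\to M$ with $\gamma_q(\delta(p))=q$ crosses the level $f=a_0$, and since $a_0$ is strictly between $f(\gamma_q(0))$ and $f(q)$, this crossing occurs at an interior parameter of $\gamma_q$. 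Lemma \ref{lem: f-geod are smooth at int points} then implies that such an interior point cannot be an endpoint of \emph{any} maximal $f$-geodesic, so every foot point lies in $f^{-1}(a_0)\setminus\cC(f)$, which by Step 1 is a smooth hypersurface. One then replaces $M_f^{a_0}$ by a compact smooth piece of this hypersurface containing the foot points of $\overline{V_p}$ (after shrinking $V_p$), on which $d_{M_f^{a_0}}$ and the distance to the compact piece agree, and applies \cite[Theorem 1]{R} to that compact submanifold. The dual argument for $\cC_-(f)$ is as you describe, using that in the Riemannian setting reverse distances and reverse cut loci coincide with the usual ones.
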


\end{remark}


\section{Examples  of almost distance functions}
In this section, we construct  almost distance functions $f$ on Euclidean plane which admits  a point in $\left(f^{-1}(\inf f,\sup f)\setminus{\cal C}(f) \right)\cap \overline{ {\cal C}_+(f)   }\cap\overline{{\cal C}_-(f)}.$
Here $\overline A$ denotes the closure of the set $A.$
Let $\R^2$ denote Euclidean plane with canonical coordinates $(x,y)$ with the origin $o=(0,0).$
Let $D_1$ denote the unit closed ball centered at the origin, so that $D_1=\{(x,y)| x^2+y^2\leq1\}.$ 
Choose any strictly decreasing sequence $\{\theta_i\}_{i=1}^\infty$ with $\theta_1<\pi$ convergent to zero.
For each $i\geq 1,$
put $p_i:=(2\cos\omega_i,2\sin\omega_i),$ where $\omega_i:=(\theta_i+\theta_{i+1})/2.$
It is clear that for each $i\geq 1,$ both points $(\cos\theta_i,\sin\theta_i),$ and 
$ (\cos\theta_{i+1},\sin\theta_{i+1})$ lie on the common circle centered at $p_i$ with radius $r_i:=d(p_i,(\cos\theta_i,\sin\theta_i) ).$
We define a closed subset $N$ of $\R^2$ by
$$N:=D_1\setminus\bigcup_{i=1}^\infty B_{r_i}(p_i),$$
where  $B_{r_i}(p_i)$ denotes the open ball centered at $p_i$ with radius $r_i.$
The function $d_N$ is an almost distance function and $p_i\in{\cal C} _+(d_N)$ for each $i.$
Thus, the point $(2,0)$ is in the closure of ${\cal C}_+(d_N),$ but it is an  interior point  of the maximal $d_N$-geodesic, $\{(t,0)\; |\:  t\geq1\}.$ 

It is clear to see that the rays 
$R_\theta(o):=\{ (r\cos\theta,r\sin\theta)\; | \: r\geq1 \}, \theta\in(\theta_1,2\pi)\cup\{\theta_i\; | \: i\geq 1   \}$ 
emanating from $N$ 
are maximal $N$-segments, and hence maximal $d_N$-segments.
Note that $d_N(t,0)=|t|-1$ for any $t$ with $|t|\geq1.$ 

Next we will construct the function $\eta$ defined by a  sequence of closed subsets $\{C_n\}_{n=3}^\infty$ (see Example \ref{EWu almost dist} and \cite{Wu}). 
For each $\theta_i,$ and $n\geq3,$  put $q_i^{(n)}:=(n\cos\theta_i,-n\sin\theta_i).$
Recall that $\{\theta_i\}$ denote the strictly decreasing  sequence convergent to zero.
Hence, for each $i$ and  $n\geq3,$ the point $u_i:=(2\cos\omega_i,-2\sin\omega_i),$ where $\omega_i=(\theta_i+\theta_{i+1})/2$ is equidistant from $q^{(n)}_i$ and $q^{(n)}_{i+1}.$
For each $n\geq3,$ we define a closed subset $C_n$ by
$$C_n:=  \R^2\setminus\bigcup_{i=1}^\infty \left(B_{r_i^{(n)}}(u_i)\cup B_n(o) \right),  $$
where $r_i^{(n)}:=d(u_i,q_i^{(n)}).$
We define an almost distance function $\eta$ on $\R^2$ by
$$\eta(p):=\lim_{n\to\infty} (n-d(p,C_n) ).$$
 It is clear to see that 
the rays $R_\theta(o):=\{(r\cos\theta,r\sin\theta)|r\geq0\}, \theta\in[0,2\pi-\theta_1]\cup\{ 2\pi-\theta_i\; |\: i\geq 2\}$ emanating from $o$ are $\eta$-geodesics.
For each $u_i,$ the rays $R_\theta(u_i):=\{  (r\cos\theta,r\sin\theta)+u_i\; |\: \: r\geq 0  \}, \theta\in[2\pi-\theta_i,2\pi-\theta_{i+1}]$ emanating from $u_i$ are $\eta$-geodesics.
For each point $p$ on the line segment $ou_i$ joining  $o$ to $u_i,$ 
the two rays
$R_\theta(p):=\{ (r\cos\theta,r\sin\theta)+p\; |\:  r\geq0 \}, \theta=2\pi-\theta_i,2\pi-\theta_{i+1}$ are 
$\eta$-geodesics.
Hence $u_i\in{\cal C}_-(\eta)$ for each $u_i$ and each line segment $ou_i$ is  a subset of ${\cal C}_-(\eta).$ In particular, the point $(2,0)$ is in the closure of ${\cal C}_-(\eta),$ since $\lim_{i\to\infty}u_i=(2,0).$

Now we will  construct another almost distance function $f_\eta^N$ by combining $\eta$ and $d_N.$
Since $\eta(x,0)=|x|$ for all $x\in{\R},$ $d_N(x,0)=0$ for all $x$ with $|x|\leq 1,$ and $\eta(x,0)=d_N(x,0)+1=|x|$ for all $x$ with $|x|\geq1,$
$\eta_1(x,0)=d_N(x,0)+1$ holds for all real number $x,$
where $\eta_1$ denotes  a 1-Lipschitz function on $\R^2$ defined by $\eta_1(x,y):=\max\{\eta(x,y),1\}.$
Thus we may define a 1-Lipschitz function $f_\eta^N$ on $\R^2$
by
$f_\eta^N(x,y)=d_N(x,y) +1$ for $y\geq0,$ and
$ f_\eta^N(x,y)=\eta_1(x,y)$ for $y\leq0.$
Note that $\inf f_\eta^N=1$ and $\sup f_\eta^N=\infty.$
It is easy to check that
the function $f_\eta^N$ is an almost distance function on $\R^2$ 
and that 
the point $(2,0)$ is in the closure of ${\cal C}_-(f_\eta^N)$ and the closure of ${\cal C}_+(f_\eta^N).$

By imitating the way above,
it is possible to construct an almost distance function $f$ which admits infinitely
many points in $(\overline{ {\cal C} _+(f) } \cap \overline{ {\cal C}_-(f)    })\setminus{\cal C}(f) .$ 
Indeed, for each pair of positive numbers $a<b<\pi/2,$ we proved that there exists an
almost distance function $f_{ab}$ on $\R^2$ such that $(2\cos\omega,2\sin\omega)$ is in $(\overline{ {\cal C} _+(f_{ab}) } \cap \overline{ {\cal C}_-(f_{ab})    })\setminus{\cal C}(f_{ab}) $ and each ray  $R_\theta(o):=\{(r\cos\theta,r\sin\theta)\; |\: r\geq1\}$
emanating from the unit circle $x^2+y^2=1$ is an $f_{ab}$-geodesic if $\theta\in[0,2\pi]\setminus(a,b).$
	Thus, if  a strictly decreasing sequence $\{\epsilon_n\}_{n=1}^\infty$ with $\epsilon_1<\pi/2$ convergent to 0 is given,
we get   a sequence of  almost distance functions
$f_n:=f_{\epsilon_n\epsilon_{n+1}}$ on $\R^2$
such that the point $(2\cos\omega_n,2\sin\omega_n),$  where $\omega_n:=(\epsilon_n+\epsilon_{n+1})/2,$ is in $(\overline{ {\cal C} _+(f_n) } \cap \overline{ {\cal C}_-(f_n)    })\setminus{\cal C}(f_n) $
and  each ray $R_\theta(o):=\{(r\cos\theta,r\sin\theta)\; |\: r\geq1\}$
emanating from the unit circle $x^2+y^2=1$ is an $f_{n}$-geodesic if $\theta\in[0,2\pi]\setminus(\epsilon_{n+1},\epsilon_n).$
Therefore, it is easy to construct an almost distance function $f$ on $\R^2$ such that
for each $n$ the point $(2\cos\omega_n,2\sin\omega_n)$ is an element of 
$(\overline{ {\cal C} _+(f) } \cap \overline{ {\cal C}_-(f)    })\setminus{\cal C}(f) $
and each ray $R_\theta(o):=\{(r\cos\theta,r\sin\theta)\; |\: r\geq1\}$
emanating from the unit circle $x^2+y^2=1$ is an $f$-geodesic if
 $\theta\in[0,2\pi]\setminus(0,\epsilon_1).$
\bigskip

\noindent
{\bf Acknowledgments}\medskip\\
The author  would like to thank Professor Sorin V. Sabau for his valuable comments on Finsler geometry.




\bigskip

\noindent School of Science,
Department of Mathematics,\\
Tokai University,
Hiratsuka City, Kanagawa Pref., 259\,--\,1292,
Japan

\medskip
\noindent
{\tt
tanaka@tokai-u.jp}

\end{document}